\newenvironment{@abssec}[1]{%
    \if@twocolumn

      \section*{#1}%
    \else

      \vspace{.05in}\footnotesize
      \parindent .2in
 {\upshape\bfseries #1. }\ignorespaces
    \fi}
\par\vspace{.1in}\fi}
\newenvironment{keywords}{\begin{@abssec}{\keywordsname}}{\end{@abssec}}
\newenvironment{AMS}{\begin{@abssec}{\AMSname}}{\end{@abssec}}
\newcommand\keywordsname{Key words}
\newcommand\AMSname{AMS subject classifications}
\newcommand\AMname{AMS subject classification}
\newcommand\restr[2]{{
\left.\kern-\nulldelimiterspace 
#1 
\vphantom{|} 
\right|_{#2} 
}}
\newtheorem{theorem}{Theorem}[section]
 \newtheorem{lemma}[theorem]{Lemma}
 \newtheorem{proposition}[theorem]{Proposition}
\newcommand{\RR}{\mathbb{R}}
\renewcommand{\SS}{\mathbb{S}}
\def\XXint#1#2#3{{\setbox0=\hbox{$#1{#2#3}{\int}$}
\vcenter{\hbox{$#2#3$}}\kern-.5\wd0}}
\newcommand{\dist}{\mathop{\mathrm{dist}}}  
\newcommand{\link}{\mathop{\circ\kern-.35em -}}
\newcommand{\ol}{\overline}
\newcommand{\pa}{\partial}
\newcommand{\dv}{\mathop{\mathrm{div}}}
\newcommand{\gr}{\nabla}
\newcommand{\al}{\alpha}
\newcommand{\be}{\beta}
\newcommand{\ga}{\gamma}  
\newcommand{\Ga}{\Gamma}
\newcommand{\de}{\delta}
\newcommand{\De}{\Delta}
\newcommand{\ve}{\varepsilon}
\newcommand{\la}{\lambda}
\newcommand{\La}{\Lambda}    
\newcommand{\ka}{\kappa}
\newcommand{\si}{\sigma}
\newcommand{\Om}{\Omega}
\newcommand{\rn}{{\mathbb{R}}^N}
\newcommand{\sg}{\sigma}
\newcommand{\cF}{\mathcal{F}}
\newcommand{\cG}{{\mathcal G}}
\newcommand{\cH}{{\mathcal H}}
\newcommand{\cX}{\mathcal{X}}
\title{Two-phase heat conductors with a surface \\
of the constant flow property}
\author{Lorenzo Cavallina\thanks{Research Center for Pure and Applied Mathematics,
Graduate School of Information Sciences, Tohoku
University, Sendai, 980-8579, Japan ({\tt cava@ims.is.tohoku.ac.jp}, {\tt  sigersak@tohoku.ac.jp}).}\ ,
Rolando Magnanini\thanks{Dipartimento di Matematica U.~Dini, Universit\`a di Firenze, viale Morgagni 67/A, 50134 Firenze, Italy
({\tt rolando.magnanini@unifi.it}).}\ , and Shigeru Sakaguchi\footnotemark[1]}
\date{}
\begin{document}

\maketitle

\begin{abstract}
We consider a two-phase heat conductor in $\mathbb R^N$ with $N \geq 2$ consisting of a core and a shell with different constant conductivities. We study the role played by radial symmetry for overdetermined problems of elliptic and parabolic type.
\par
First of all, with the aid of the implicit function theorem, we give a counterexample to radial symmetry for  some two-phase elliptic overdetermined  boundary value problems of Serrin-type.
\par
Afterwards, we consider the following setting for a two-phase parabolic overdetermined problem. We suppose that, initially, the conductor has temperature 0 and, at all times, its boundary is kept at temperature 1. A hypersurface in the domain has the constant flow property if at every of its points the heat flux across surface only depends on time. It is shown that the structure of the conductor must be spherical, if  either there is a surface of the constant flow property in the shell near the boundary or a connected component of the boundary of the heat conductor is a surface of the constant flow property. 
Also, by assuming that the medium outside the conductor has a possibly different conductivity,  we consider a Cauchy problem in which the conductor has initial inside temperature $0$ and  outside  temperature $1$. We then show that a quite similar symmetry result holds true. 
\end{abstract}

\begin{keywords}
heat equation, diffusion equation, two-phase heat conductor, transmission condition, initial-boundary value problem,  Cauchy problem, constant flow property, overdetermined problem, symmetry.
\end{keywords}

\begin{AMS}
Primary 35K05 ; Secondary  35K10,  35B06, 35B40,  35K15, 35K20, 35J05, 35J25
\end{AMS}

\pagestyle{plain}
\thispagestyle{plain}

\section{Introduction}
\label{introduction}

\vskip 2ex
In this paper we examine several overdetermined elliptic and parabolic problems involving a two-phase heat conductor in $\rn$, which consists of a core  and a shell  with different constant conductivities. 

The study of overdetermined elliptic problems dates back to the seminal work of Serrin \cite{Se1971}, where he dealt with the so called torsion function, i.e. the solution to the following elliptic boundary value problem.
\begin{equation*}
-\Delta u =1\;\text{in }\Om, \quad u=0\;\text{ on }\pa\Om.
\end{equation*}
Serrin showed that the normal derivative of the torsion function $u$ is a constant function on the boundary $\pa\Om$ if and only if the domain $\Om$ is a ball. We remark that such  overdetermined conditions arise naturally in the context of critical shapes of shape functionals. In particular, if we define the torsional rigidity functional as $T(\Omega)=\int_\Om u\,dx$, then Serrin's overdetermination on the normal gradient of $u$ is equivalent to the shape derivative of $T$ vanishing for all volume preserving perturbations (we refer the interested reader to  \cite[chapter 5]{henrot}). 

As far as overdetermined parabolic problems are concerned, we refer for example to \cite{AG1989ARMA}, where symmetry results analogous to Serrin's one are proved as a consequence of an overdetermination on the normal derivative on the boundary,  which is called the \emph{constant flow property} in \cite{Sav2016}.

In this paper we show that two-phase overdetermined problems are inherently different. As a matter of fact, due to the introduction of a new degree of freedom (the geometry of the core $D$), we prove that two-phase elliptic overdetermined problems of Serrin-type admit non-symmetric solutions. On the other hand, we show that, for two-phase overdetermined problems of parabolic type,  the stronger assumption of constant heat flow at the boundary for all time $t>0$ leads to radial symmetry (this result holds true even when the overdetermined condition is imposed only on a connected component of the boundary $\pa\Om$). We will also examine another overdetermination, slightly different than the one introduced in \cite{AG1989ARMA}. Namely we will consider the case where, instead of the boundary, the above mentioned constant flow property is satisfied on some fixed surface inside the heat conductor. We will show that, even in this case, the existence of such a surface satisfying the constant flow property leads to the radial symmetry of our heat conductor.

In what follows, we will introduce the notation and the main results of this paper.
Let $\Omega$ be a bounded $C^2$ domain in $\mathbb R^N\ (N \ge 2)$ with boundary $\partial\Omega$, and let $D$ be a bounded  $C^2$ open set in $\mathbb R^N$ which may have finitely many connected components.  Assume that $\Omega\setminus\overline{D}$ is connected and $\overline{D} \subset \Omega$. Denote by $\sigma=\sigma(x)\ (x \in \mathbb R^N)$  the conductivity distribution of the medium given by
$$
\sigma =
\begin{cases}
\sigma_c \quad&\mbox{in } D, \\
\sigma_s \quad&\mbox{in } \Omega \setminus D, \\
\sigma_m \quad &\mbox{in } \mathbb R^N \setminus \Omega,
\end{cases}
$$
where $\sigma_c, \sigma_s, \sigma_m$ are positive constants and $\sigma_c \not=\sigma_s$. This kind of three-phase electrical conductor has been dealt with in \cite{KLS2016} in the study of neutrally coated inclusions.

The first result is a counterexample to radial symmetry for  the following two-phase elliptic overdetermined boundary value problems of Serrin-type: 
\begin{equation}
\label{modified poisson and Hermholtz equation with overdetermined boundary conditions} 
\mbox{\rm div}(\sigma\nabla u) = \beta u -\gamma < 0\ \mbox{ in } \Omega, \quad  u = c \ \mbox{ and } \ \sigma_s \,\pa_\nu u= d_0\  \mbox{ on } \partial \Omega;
\end{equation}
here, $\pa_\nu$ denotes the outward normal derivative at $\pa\Om$, $\beta\ge 0 $, $\gamma >0$, and $c\in\RR$ are given numbers and $d_0$  is some negative constant determined by the data of the problem. 

\begin{theorem}
\label{th:a counterexample to the symmetry for Serrin-type overdetermined problems}
Let $B_R\subset B_1$ be concentric balls of radii $R$ and $1$. For every domain $\Omega$ of class $C^{2,\al}$ sufficiently close to $B_1$,
there exists a 
domain $D$ of class $C^{2,\al}$ (and close to $B_R$) such that problem \eqref{modified poisson and Hermholtz equation with overdetermined boundary conditions} admits a solution for the pair $(D,\Omega)$.  
\end{theorem}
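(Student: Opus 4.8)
The plan is to use the implicit function theorem in a suitable Banach space setting, treating the boundary $\pa D$ as the unknown to be solved for, once $\Om$ is prescribed near $B_1$. Concretely, I would parametrize nearby domains $D$ by functions $\fhi$ in a neighborhood of $0$ in $C^{2,\al}(\pa B_R)$ via normal graphs $\pa D_\fhi = \{x + \fhi(x/|x|)\,\nu(x) : x \in \pa B_R\}$, and similarly parametrize $\Om$ by $\psi \in C^{2,\al}(\pa B_1)$. For each pair $(\fhi,\psi)$ near $(0,0)$, the transmission problem $\mathrm{div}(\si\na u) = \be u - \ga$ in $\Om_\psi$ with $u = c$ on $\pa\Om_\psi$ (and the natural transmission conditions across $\pa D_\fhi$) has a unique solution $u = u_{\fhi,\psi}$, depending smoothly on the data by elliptic regularity. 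The overdetermined condition $\si_s\,\pa_\nu u = d_0$ on $\pa\Om_\psi$ is then, after subtracting off its mean (which merely fixes the value of the constant $d_0$), an equation of the form $F(\fhi,\psi) = 0$ where $F$ maps into the space of mean-zero $C^{1,\al}$ functions on $\pa\Om_\psi$ (transported back to $\pa B_1$).

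The key step is to verify that $F(0,0) = 0$ and that the partial Fréchet derivative $D_\fhi F(0,0)$ is an isomorphism onto the target space. The first is exactly the statement that for the concentric configuration $(B_R, B_1)$ the solution $u$ is radial, hence $\pa_\nu u$ is constant on $\pa B_1$ — a direct computation using the explicit radial solutions (combinations of $1$, $|x|^{2-N}$, and a particular solution of the inhomogeneous equation, or modified Bessel functions when $\be > 0$) and matching across $r = R$. The invertibility of $D_\fhi F(0,0)$ is the heart of the matter: one computes that the linearized operator, when expanded in spherical harmonics $Y_k$ on $\pa B_1$, acts diagonally, sending the mode-$k$ component of $\fhi$ to a scalar multiple $\mu_k$ of the mode-$k$ component of the output, and one must check $\mu_k \neq 0$ for every $k \ge 1$ and that $\mu_k$ is bounded away from $0$ as $k \to \infty$ (so that the operator is bounded below on the relevant Hölder spaces, hence an isomorphism by also checking surjectivity mode by mode). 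This reduces to an ODE computation: perturbing $\pa D$ in the direction $Y_k$ produces a harmonic-type correction to $u$ in $D$ and in the shell, and $\mu_k$ is, up to nonzero factors, the value at $r=1$ of the resulting radial profile's derivative; the sign condition $\si_c \neq \si_s$ should guarantee non-degeneracy.

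Once $D_\fhi F(0,0)$ is an isomorphism, the implicit function theorem yields, for every $\psi$ in a neighborhood of $0$ in $C^{2,\al}(\pa B_1)$ — i.e. for every $\Om$ of class $C^{2,\al}$ sufficiently close to $B_1$ — a unique $\fhi = \fhi(\psi)$ near $0$ with $F(\fhi(\psi),\psi) = 0$, and the corresponding $D = D_{\fhi(\psi)}$ is of class $C^{2,\al}$ and close to $B_R$. The constant $d_0$ is then read off as $\si_s$ times the (constant, by construction) value of $\pa_\nu u$ on $\pa\Om$, and it is negative because $\mathrm{div}(\si\na u) = \be u - \ga < 0$ forces $u$ to exceed its boundary value $c$ in the interior (by the maximum principle for the transmission operator), whence $\pa_\nu u < 0$.

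The main obstacle I anticipate is the mode-by-mode non-degeneracy computation for $D_\fhi F(0,0)$: one needs an explicit enough description of the linearized transmission problem to extract the eigenvalues $\mu_k$, to check none of them vanishes, and to control their asymptotics in $k$ so as to conclude the linearized operator is a topological isomorphism between the correct Hölder spaces (rather than merely injective). A secondary technical point is setting up the smooth dependence of $u_{\fhi,\psi}$ on the domain perturbations rigorously — the standard device is to pull everything back to the fixed reference domain $B_1 \setminus \ol{B_R}$ via a diffeomorphism depending smoothly on $(\fhi,\psi)$, transforming the PDE into one with $(\fhi,\psi)$-dependent coefficients on a fixed domain, to which analytic/smooth dependence results for elliptic transmission problems apply.
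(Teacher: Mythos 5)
Your proposal is correct and follows essentially the same route as the paper: implicit function theorem in Hölder spaces, normal-graph parametrization of $\partial D$ and $\partial\Omega$, solving for $D$ as a function of $\Omega$ near the concentric configuration, diagonalization of the linearized operator by spherical harmonics, and an ODE non-degeneracy argument exploiting $\sigma_c \neq \sigma_s$. The paper carries out the mode-by-mode computation you flag as the "main obstacle" by a unique-continuation argument for the radial ODE (if $\partial_r s_k(1)=0$ then $s_k\equiv 0$ on $[R,1]$, then also on $[0,R]$ by a maximum-principle argument, contradicting the jump $s_k(R^+)-s_k(R^-)=\partial_r u(R^-)-\partial_r u(R^+)\neq 0$), and otherwise matches your outline step for step.
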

This result is an application of the implicit function theorem. It was shown by Serrin in \cite{Se1971} that, in the one-phase  case ($\si_c=\si_s$), a solution of \eqref{modified poisson and Hermholtz equation with overdetermined boundary conditions} exists if and only if $\Om$ is a ball. Thus, as we shall see for two-phase heat conductors,  Theorem \ref{th:a counterexample to the symmetry for Serrin-type overdetermined problems} 
sets an essential difference between  the parabolic overdetermined regime in Theorem \ref{th:constant flow serrin} and that in the elliptic problem \eqref{modified poisson and Hermholtz equation with overdetermined boundary conditions}. 
\par
A result similar to Theorem \ref{th:a counterexample to the symmetry for Serrin-type overdetermined problems} appeared  in \cite{DEP}, after we completed this paper. That result concerns certain semilinear equations (with a point-dependent nonlinearity) on compact Riemannian manifolds. The techniques used there do not seem to be easily applicable to the two-phase case.

The remaining part of this paper focuses on two-phase overdetermined problems of parabolic type.
The papers \cite{Strieste2016, SBessatsu2017} dealt with the heat diffusion over two-phase or three-phase heat conductors.
Let $u =u(x,t)$ be the unique bounded solution of either the initial-boundary value problem for the diffusion equation:
\begin{eqnarray}
&&u_t =\mbox{ div}( \sigma \nabla u)\quad\mbox{ in }\ \Omega\times (0,+\infty), \label{heat equation initial-boundary}
\\
&&u=1  \ \quad\qquad\qquad\mbox{ on } \partial\Omega\times (0,+\infty), \label{heat Dirichlet}
\\ 
&&u=0  \  \quad\qquad\qquad \mbox{ on } \Omega\times \{0\},\label{heat initial}
\end{eqnarray}
or the Cauchy problem for the diffusion equation:
\begin{equation}
  u_t =\mbox{ div}(\sigma \nabla u)\quad\mbox{ in }\  \mathbb R^N\times (0,+\infty) \ \mbox{ and }\ u\ ={\mathcal X}_{\Omega^c}\ \mbox{ on } \mathbb R^N\times
\{0\},\label{heat Cauchy}
\end{equation}
where ${\mathcal X}_{\Omega^c}$ denotes the characteristic function of the set $\Omega^c=\mathbb R^N \setminus\Omega$. Consider a bounded domain $G$ in $\mathbb R^N$ satisfying
\begin{equation}
\label{near the boundary}
\overline{D} \subset G \subset \overline{G} \subset \Omega\ \mbox{ and } \mbox{ dist}(x,\partial\Omega) \le \mbox{ dist}(x, \overline{D})\ \mbox{ for every } x \in \partial G.
\end{equation}
\begin{figure}[h]
\centering
\includegraphics[width=0.55\textwidth]{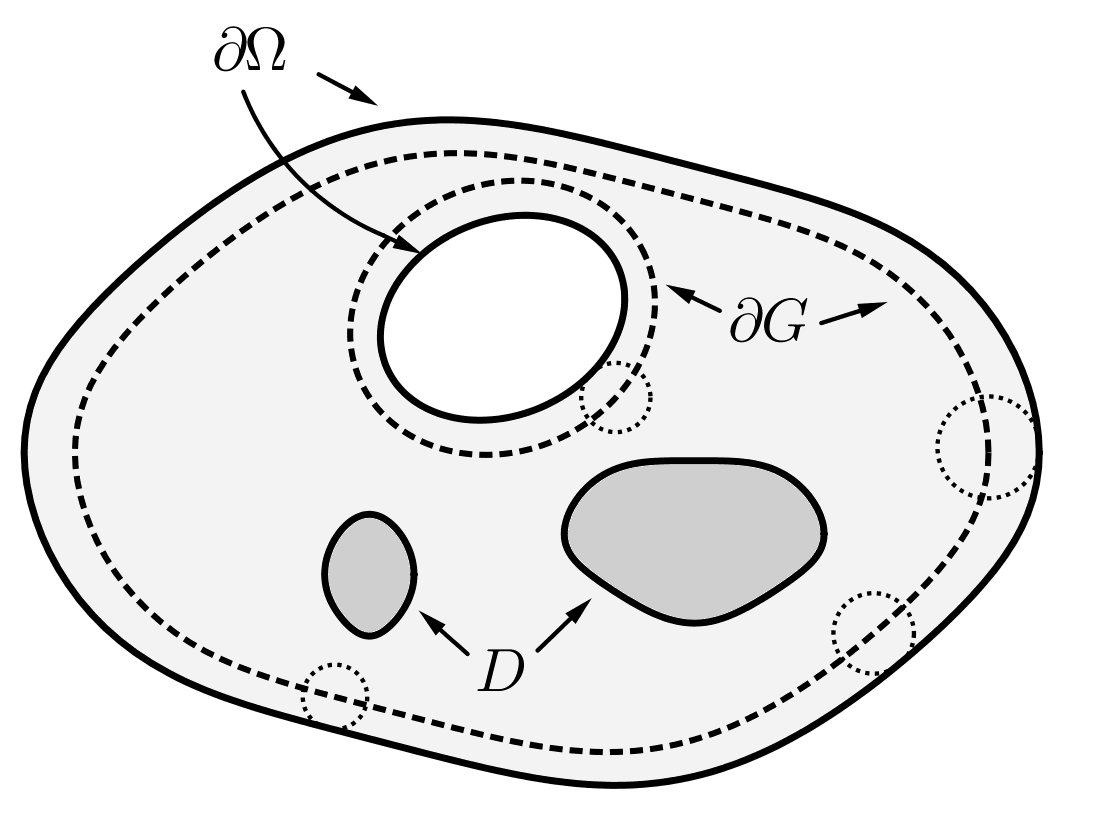}
\caption{The two-phase conductor described by $\Om$ and $D$ and the surface $\pa G$.}
\label{picture0}
\end{figure}
In \cite{Strieste2016, SBessatsu2017},  the third author obtained the following theorems.

\renewcommand*{\thetheorem}{\Alph{theorem}}
\setcounter{theorem}{0}


\begin{theorem}[\cite{Strieste2016}]
\label{th:stationary isothermic} 
Let $u$ be the solution of problem \eqref{heat equation initial-boundary}--\eqref{heat initial},  and let
$\Gamma$ be a connected component of $\partial G$ satisfying
\begin{equation}
\label{nearest component}
\mbox{\rm dist}(\Gamma, \partial\Omega) = \mbox{\rm dist}(\partial G, \partial\Omega).
\end{equation}  
\par
If there exists a function $a : (0, +\infty) \to (0, +\infty) $ satisfying
\begin{equation}
\label{stationary isothermic surface partially}
u(x,t) = a(t)\ \mbox{ for every } (x,t) \in \Gamma \times (0, +\infty),
\end{equation}
then $\Omega$ and $D$ must be concentric balls.
\end{theorem}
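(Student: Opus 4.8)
\medskip

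The plan is to exploit the short-time asymptotic behavior of the solution $u$ near the surface $\Gamma$ and to combine it with a reflection/moving-planes argument, following the strategy pioneered by Magnanini--Sakaguchi and adapted to the two-phase setting. First I would establish a precise asymptotic expansion of $u(x,t)$ as $t\to 0^+$ for points $x$ lying in the shell $\Omega\setminus\overline D$ in a neighborhood of $\partial\Omega$. Since near $\partial\Omega$ the equation is the one-phase heat equation with conductivity $\sigma_s$ and boundary data $1$, the classical result (going back to Varadhan-type estimates and the refinements used for stationary isothermic surfaces) gives, for $x$ in the shell,
\begin{equation*}
-4\sigma_s t\,\log\bigl(1-u(x,t)\bigr) \longrightarrow \dist(x,\partial\Omega)^2 \quad\text{as } t\to 0^+,
\end{equation*}
uniformly on compact subsets of the shell. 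This is where the geometric condition \eqref{near the boundary}, namely $\dist(x,\partial\Omega)\le\dist(x,\overline D)$ for $x\in\partial G$, enters crucially: it guarantees that the heat reaching $\Gamma$ at leading order in small time has travelled only through the shell (it has not yet "felt" the core $D$), so the one-phase asymptotics are legitimate on $\Gamma$. The hypothesis \eqref{nearest component} that $\Gamma$ realizes $\dist(\partial G,\partial\Omega)$ ensures we are looking at the component closest to $\partial\Omega$, where this analysis is sharpest.

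\medskip

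Next, feeding the overdetermination \eqref{stationary isothermic surface partially} into this expansion: since $u(x,t)=a(t)$ is constant in $x$ along $\Gamma$, the left-hand side above is constant on $\Gamma$, hence $\dist(x,\partial\Omega)$ must be constant on $\Gamma$. In other words, $\Gamma$ is a "parallel surface" to $\partial\Omega$ at some fixed distance. One then wants to upgrade this to: $\partial\Omega$ is a sphere. Here I would either invoke a geometric lemma (a connected $C^2$ hypersurface $\Gamma$ that is a common level set of the distance function to $\partial\Omega$, with $\Gamma$ enclosing a bounded region, forces, together with the analysis of the second-order term in the expansion, that $\partial\Omega$ has constant mean curvature along the relevant part) or, more robustly, set up Aleksandrov's moving plane method. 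For the moving planes approach: using the fact that $\Gamma\subset\Omega$ is at constant distance from $\partial\Omega$ and arguing as in Serrin's reflection technique applied to the boundary piece plus the interior symmetry of the equation, one concludes $\partial\Omega$ is a sphere $\partial B$.

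\medskip

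Once $\partial\Omega$ is a ball, it remains to show $D$ is a concentric ball. For this I would use the full (two-phase) solution together with the fact that $u$ now possesses the constant-flow / stationary-isothermic data on $\Gamma$, and run a second reflection argument, this time on the whole conductor: reflect $u$ across each hyperplane through the center of $\partial\Omega$. The reflected function solves the same transmission problem on the reflected configuration; matching it with $u$ via the strong maximum principle and Hopf's lemma at the transmission interface $\partial D$ (using the transmission conditions continuity of $u$ and of $\sigma\partial_\nu u$ across $\partial D$) forces $D$ to be invariant under every such reflection, hence $D$ is a ball concentric with $\Omega$.

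\medskip

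The main obstacle I anticipate is the first step in the two-phase setting: justifying the one-phase small-time asymptotics \emph{on} $\Gamma$ rigorously, i.e. controlling the error coming from the presence of the core. One must show that the contribution to $u(x,t)$ from paths that reach the core and come back is exponentially smaller (in $1/t$) than the leading term $\exp(-\dist(x,\partial\Omega)^2/4\sigma_s t)$; this is exactly guaranteed by the strict-type inequality forced by \eqref{near the boundary} at interior points together with $\overline D\subset G$, but making the comparison quantitative (e.g. via suitable barriers or via the representation of $u$ through the fundamental solution of the two-phase operator and a careful splitting of the relevant time-space regions) is the technical heart of the argument. The subsequent moving-plane steps are, by comparison, standard once the interior "parallel surface" conclusion is in hand.
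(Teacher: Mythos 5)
Your proposal has a genuine and serious gap: the method of moving planes is not available in the two-phase setting, a point the paper itself emphasizes (``Theorem~\ref{th:constant flow serrin} gives a new symmetry result for two-phase heat conductors, in which that method cannot be applied''). The reflection argument requires the coefficient $\sigma$ in $\mathrm{div}(\sigma\nabla u)$ to be invariant under each candidate reflection, but $\sigma$ jumps across $\partial D$, so unless $D$ is \emph{already} known to be symmetric the reflected function does not solve the same equation; matching by Hopf's lemma at the interface is exactly what breaks down. In particular your final step---reflecting across hyperplanes through the center of $\partial\Omega$ to force $D$ to be a ball---does not work. The actual route (from \cite{Strieste2016}, and mirrored in this paper's Sections~\ref{section3}--\ref{section4} for the constant-flow analogue) is: once $\Gamma$ and a component $\gamma$ of $\partial\Omega$ are shown to be concentric spheres, one takes a Laplace transform $v(x)=\int_0^\infty e^{-t}(1-u(x,t))\,dt$ to reduce to an elliptic transmission problem, uses Lemma~\ref{le: the unique determination} (a uniqueness-of-conductivity lemma) to rule out the annular case $\Omega=B_{\rho_+}\setminus\overline{B_{\rho_-}}$, and finally invokes the two-phase Serrin-type Theorem~\ref{th:constant Neumann boundary condition} (Theorem~5.1 of \cite{SBessatsu2017}) to conclude that $D$ is a ball centered at the common center. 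No moving plane is run.

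There is a second, smaller but still real gap in the first half. The Varadhan-type estimate $-4\sigma_s t\log(1-u(x,t))\to \dist(x,\partial\Omega)^2$ only yields that $\dist(\cdot,\partial\Omega)$ is constant on $\Gamma$, i.e.\ $\Gamma$ is parallel to (a component of) $\partial\Omega$ at some distance $r_0$. That is not enough to conclude $\partial\Omega$ is a sphere. The missing ingredient is the finer heat-content asymptotics of Proposition~\ref{prop:heat content asymptotics}: the $t^{-(N+1)/4}$-scaled mass of $u$ over the touching ball $B_{r_0}(x)$ converges to $C(N,\sigma)/\sqrt{\Pi_{\partial\Omega}(r_0,y)}$, and constancy of $u$ on $\Gamma$ (via the balance law, as in Lemma~\ref{le: constant weingarten curvature}) then forces $\Pi_{\partial\Omega}(r_0,\cdot)=\prod_j(1/r_0-\kappa_j)$ to be constant on $\gamma$. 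It is \emph{this} elliptic Weingarten-type constraint together with the bound $\kappa_j<1/r_0$ that lets you invoke Aleksandrov's sphere theorem. You gesture at ``the second-order term in the expansion'' or at moving planes for this step, but the former needs to be the explicit $\Pi_{\partial\Omega}$ asymptotics and the latter again fails for the reason above. Your instinct that condition~\eqref{near the boundary} localizes the short-time analysis to the one-phase shell is correct, and that part of the outline matches the paper; but both the curvature extraction and, especially, the treatment of $D$ must be replaced by the heat-content/Aleksandrov argument and the Laplace-transform/elliptic two-phase Serrin argument, respectively.
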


\begin{theorem}[\cite{Strieste2016, SBessatsu2017}] 
\label{th:stationary isothermic cauchy} Let $u$ be the solution of problem \eqref{heat Cauchy}. Then the following assertions hold:
\begin{itemize}
\item[\rm (a)] if
there exists a function $a : (0, +\infty) \to (0, +\infty) $ satisfying 
\begin{equation}
\label{stationary isothermic surface}
u(x,t) = a(t)\ \mbox{ for every } (x,t) \in \partial G\times (0, +\infty),
\end{equation}
then $\Omega$ and $D$ must be concentric balls;
\item[\rm (b)] if $\sigma_s=\sigma_m$ and \eqref{stationary isothermic surface partially} holds
on some connected component $\Gamma$ of $\partial G$ satisfying \eqref{nearest component}  for some function $a : (0, +\infty) \to (0, +\infty) $, then  $\Omega$ and $D$ must be concentric balls.
\end{itemize}
\end{theorem}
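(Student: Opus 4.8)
The plan is to separate a short‑time analysis, which locates and shapes $\Gamma$ and $\partial\Om$, from a symmetrization step, which then pins down the core $D$; assertions (a) and (b) are handled by the same scheme, the hypothesis $\si_s=\si_m$ in (b) merely making $\si$ constant across $\partial\Om$ so that near $\partial\Om$ everything reduces to the one‑phase heat equation. First I would invoke a Varadhan‑type short‑time asymptotic for the Cauchy problem \eqref{heat Cauchy}:
\[
-4t\,\log u(x,t)\ \longrightarrow\ \Lambda(x)^2\qquad\text{as }t\to 0^+,
\]
where $\Lambda(x)$ is the distance from $x$ to $\partial\Om$ in the Riemannian metric $\si^{-1}\,dx^2$. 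For $x$ in the shell obeying the localization $\dist(x,\partial\Om)\le\dist(x,\overline D)$ of \eqref{near the boundary}, the minimizing path stays in the shell, so $\Lambda(x)=\dist(x,\partial\Om)/\sqrt{\si_s}$. Evaluating this on $\Gamma$ (resp.\ on all of $\partial G$ in case (a)) and using \eqref{stationary isothermic surface partially} (resp.\ \eqref{stationary isothermic surface}) forces $\dist(\cdot,\partial\Om)$ to equal a single constant $R>0$ there; hence $\Gamma$ lies in the parallel hypersurface $\{\dist(\cdot,\partial\Om)=R\}$, with $R$ below the reach of $\partial\Om$, so that surface is $C^2$ near $\Gamma$ (in case (b), condition \eqref{nearest component} guarantees that $\Gamma$ is the relevant, closest component; in case (a), the parallel set is a single closed sphere‑type component, so $\partial G$ itself is one piece).

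I would then push the expansion to the next order. In the spirit of the one‑phase analysis behind \cite{Strieste2016, SBessatsu2017}, for $x\in\Gamma$ with unique foot point $\xi(x)\in\partial\Om$ one has, as $t\to0^+$,
\[
u(x,t)\ \sim\ c_N\,\tau\,\Big(\tfrac{\sqrt{\si_s}\,t}{R}\Big)^{\!(N-1)/4}e^{-R^2/(4\si_s t)}\prod_{j=1}^{N-1}\Big(\tfrac1R-\ka_j(\xi(x))\Big)^{-1/2},
\]
where $\ka_1,\dots,\ka_{N-1}$ are the principal curvatures of $\partial\Om$ at $\xi(x)$ with the inner orientation (each factor is then positive, since $\Gamma$ is a smooth parallel surface), $c_N>0$ depends only on $N$, and $\tau\in(0,1]$ is a transmission factor depending only on $\si_s,\si_m$ (with $\tau=1$ in case (b)). Combined with the constancy of $u(\cdot,t)|_\Gamma$ and with the first step, this forces $\prod_{j}(\tfrac1R-\ka_j(\xi))$ to be constant as $\xi$ runs over the foot‑point set of $\Gamma$, which — $\Gamma$ being a closed hypersurface and $\overline D\subset G$ — one identifies with a full connected component of $\partial\Om$. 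Constancy of this symmetric function of the principal curvatures along a closed embedded hypersurface, fed into Aleksandrov's soap‑bubble theorem through the Heintze--Karcher inequality — equivalently, into the rigidity of ``$N$‑dimensional'' parallel surfaces — then yields that $\partial\Om$ is a sphere; translating and rescaling, we may take $\Om=B_1$ centred at the origin, so $\Gamma=\partial B_\rho$ with $\rho=1-R$. I expect this to be the main obstacle: justifying the second‑order expansion uniformly across the transmission interface $\partial\Om$, controlling the borderline case of \eqref{near the boundary} in which $\overline D$ could enter at the same order, and invoking the correct global geometric rigidity statement.

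With $\Om=B_1$ and $\Gamma=\partial B_\rho$ in hand, note that \eqref{near the boundary} now forces $\overline D\subset B_{\rho-R}$, hence $\overline D$ lies a positive distance inside $\partial B_\rho$. Consider the exterior region $\Om_e:=\rn\setminus\overline{B_\rho}$. There $\si$ equals $\si_s$ on $B_1\setminus\overline{B_\rho}$ and $\si_m$ on $\rn\setminus B_1$, hence is radially symmetric; the initial datum $\cX_{\Om^c}$ is radially symmetric on $\Om_e$; and by \eqref{stationary isothermic surface partially}, $u=a(t)$ on $\partial B_\rho=\partial\Om_e$. By uniqueness of the bounded solution of this exterior initial--boundary value problem with radial data, $u$ is radially symmetric on $\Om_e\times(0,+\infty)$. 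In particular $\si_s\,\pa_\nu u$ is a function of $t$ alone on $\partial B_\rho$, say $q(t)$; and since $\si=\si_s$ on both sides of $\partial B_\rho$, the transmission condition propagates $\si_s\,\pa_\nu u=q(t)$ to the $B_\rho$ side of $\partial B_\rho$ as well.

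It remains to analyse $u$ in $B_\rho$, where $u_t=\dv(\si\na u)$ with $\si=\si_c$ in $D$ and $\si=\si_s$ in $B_\rho\setminus D$, subject to the overdetermined Cauchy data $u=a(t)$ and $\si_s\,\pa_\nu u=q(t)$ on $\partial B_\rho$ and $u=0$ at $t=0$. Here I would run Serrin's moving‑plane / Aleksandrov‑reflection method \cite{Se1971} in the space variable, uniformly in $t$: for a unit vector $e$ and $\mu<0$, reflect the cap $\{x\cdot e<\mu\}\cap B_\rho$ about $\{x\cdot e=\mu\}$; since $\partial B_\rho$ is a sphere and the data $a(t),q(t)$ on it are rotation invariant, the reflected solution and $u$ agree on the relevant part of $\partial B_\rho$, and the parabolic maximum principle — with the jump of $\si$ across $\partial D$ absorbed by the transmission condition — together with Hopf's lemma closes the comparison at the critical position, forcing $D$ to be symmetric about the limiting hyperplane. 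Letting $e$ vary (all such critical hyperplanes pass through the centre of $B_\rho$) shows $D$ is a ball concentric with $B_\rho$, hence with $\Om$. Equivalently, a Laplace transform in $t$ turns the problem into a one‑parameter family of two‑phase elliptic overdetermined problems in $B_\rho$ with radial Cauchy data on $\partial B_\rho$, to which the same reflection argument applies; alternatively, $\partial B_\rho$ has now become a boundary surface of the constant flow property for an initial--boundary value problem in $B_\rho$, and one may appeal to the corresponding symmetry result. This establishes both (a) and (b).
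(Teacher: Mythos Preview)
Your short-time analysis is broadly in line with the approach in \cite{Strieste2016, SBessatsu2017} (and with the arguments the present paper reuses in Subsections~\ref{subsec:initial-boundary}--\ref{subsection 3.4}): one first shows $\Gamma$ is parallel to a component $\gamma\subset\partial\Omega$ at constant distance, then that $\prod_j(1/R-\kappa_j)$ is constant on $\gamma$, and then invokes Aleksandrov to make $\gamma$ a sphere. Two remarks on this part: (i) Aleksandrov only gives that \emph{one} component $\gamma$ of $\partial\Omega$ is a sphere, not that $\Omega=B_1$; the paper spends real work (Proposition~\ref{prop:the initial limits on the interface} plus the argument around \eqref{equality from radial symmetry}, and then Lemma~\ref{le: the unique determination} to exclude the spherical shell) to upgrade this to ``$\Omega$ is a ball''. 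Your sentence ``translating and rescaling, we may take $\Omega=B_1$'' skips both steps. (ii) In case~(a) you assert that the parallel set is connected so $\partial G$ is a single piece; this is not automatic and is not how the cited papers argue.

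The genuine gap is the final symmetrization of $D$. You propose running Serrin's moving-plane method inside $B_\rho$, ``with the jump of $\sigma$ across $\partial D$ absorbed by the transmission condition''. This does not work: when you reflect the cap $\{x\cdot e<\mu\}\cap B_\rho$ about $\{x\cdot e=\mu\}$, the reflected solution satisfies $\dv(\sigma^\star\nabla\cdot)=\partial_t$ with $\sigma^\star$ equal to $\sigma_c$ on the \emph{reflection} of $D$, not on $D$ itself. Since we do not yet know $D$ is symmetric about that hyperplane, $\sigma^\star\neq\sigma$ in the overlap region, the two functions solve \emph{different} equations there, and the parabolic comparison principle and Hopf lemma give no information. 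The paper is explicit that moving planes are unavailable in the two-phase setting (see the paragraph following Theorem~\ref{th:constant flow serrin}). The route actually taken is quite different: after reducing to $\Omega=B_R$ and obtaining constant Dirichlet and Neumann data for the Laplace-transformed function $v$ on $\partial B_R$, one invokes the two-phase Serrin-type result Theorem~\ref{th:constant Neumann boundary condition} (proved in \cite{SBessatsu2017} by an argument that is \emph{not} moving planes) to conclude that $D$ is a concentric ball; Lemma~\ref{le: the unique determination} is used separately to kill the spherical-shell alternative. Your ``alternatively'' clause at the end gestures toward this, but the main line of your argument relies on a reflection step that fails.
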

The condition \eqref{stationary isothermic surface partially} (or  \eqref{stationary isothermic surface})  means that $\Gamma$ (or $\partial G$)  is an isothermic surface  of the normalized temperature $u$ {\it at every time}; for this reason, $\Gamma$ (or $\partial G$) is called a {\it stationary isothermic surface} of $u$.

In this paper, we shall suppose that the solution $u$ of \eqref{heat equation initial-boundary}--\eqref{heat initial} or \eqref{heat Cauchy} admits a  {\it surface $\Ga\subset{\overline\Om}\setminus {\overline D}$ of the constant flow property}, that is there exists a function $d : (0, +\infty) \to \mathbb R $ satisfying
\begin{equation}
\label{constant flow surface partially}
\sigma_s\,\pa_\nu u(x,t) = d(t)\ \mbox{ for every } (x,t) \in \Gamma \times (0, +\infty),
\end{equation}
where $\pa_\nu u$ denotes the outward normal derivative of $u$ at points in $\Gamma$.
\par
We will then prove two types of symmetry results. 
We shall first start with symmetry theorems for solutions that admit a  surface $\Ga$ of the constant flow property in the shell $\Omega\setminus \overline{D}$ of the conductor.


\renewcommand*{\thetheorem}{\arabic{section}.\arabic{theorem}}
\setcounter{theorem}{1}

\begin{theorem}
\label{th:constant flow} 
Let $u$ be the solution of either problem \eqref{heat equation initial-boundary}--\eqref{heat initial} or problem \eqref{heat Cauchy},  and let
$\Gamma$ be a connected component of class $C^2$ of $\partial G$ satisfying \eqref{nearest component}.  
\par
 If there exists a function $d : (0, +\infty) \to \mathbb R $ satisfying \eqref{constant flow surface partially}, then $\Omega$ and $D$ must be concentric balls.
\end{theorem}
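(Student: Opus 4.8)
My plan is to run the short–time heat–kernel analysis that underlies Theorems~\ref{th:stationary isothermic} and~\ref{th:stationary isothermic cauchy}, but applied to the normal derivative $\sigma_s\,\pa_\nu u$ on $\Gamma$ instead of to $u$ itself. The role of hypothesis~\eqref{near the boundary} is that, for $x\in\Gamma$ and $t$ small, the value of $u(x,t)$ is governed by the heat that has reached $x$ from $\pa\Om$ (for the Cauchy problem, from $\Om^c$ across $\pa\Om$), while the effect of the jump of $\sigma$ across $\pa D$ is of strictly smaller exponential order; hence near $\Gamma$ the relevant asymptotics are the one–phase boundary–layer ones near $\pa\Om$ (with conductivity $\sigma_s$ on the inner side, and a transmission prefactor also depending on $\sigma_m$ in the Cauchy case). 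Concretely, I would establish, uniformly for $x\in\Gamma$, an expansion of the form
\[
\sigma_s\,\pa_\nu u(x,t)\ \sim\ C\,t^{-1/2}\,e^{-\dist(x,\pa\Om)^2/(4\sigma_s t)}\prod_{j=1}^{N-1}\bigl(1-\dist(x,\pa\Om)\,\kappa_j(x)\bigr)^{-1/2}\qquad(t\to0^+),
\]
with a remainder of lower order, where $C=C(N,\sigma_s,\sigma_m)>0$ and $\kappa_1(x),\dots,\kappa_{N-1}(x)$ are the principal curvatures of $\pa\Om$ at the point of $\pa\Om$ nearest to $x$; a convenient preliminary step is the Varadhan–type identity $\lim_{t\to0^+}\bigl(-4t\log\sigma_s\pa_\nu u(x,t)\bigr)=\dist(x,\pa\Om)^2$.

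\textbf{From \eqref{constant flow surface partially} to geometry.} Inserting \eqref{constant flow surface partially} into the expansion, the dominant Gaussian factor forces $\dist(\,\cdot\,,\pa\Om)$ to be constant on $\Gamma$, say $\dist(x,\pa\Om)\equiv R$; the leading prefactor then forces $\prod_{j=1}^{N-1}\bigl(1-R\,\kappa_j(\,\cdot\,)\bigr)$ to be constant on the set of points of $\pa\Om$ nearest to $\Gamma$. Since $\Gamma$ is a connected $C^2$ component of $\pa G$ realizing $\dist(\pa G,\pa\Om)$ (see \eqref{nearest component}) and $\overline D\subset G$, the surface $\Gamma$ separates $\pa\Om$ from $\overline D$; being at the constant distance $R$ from $\pa\Om$, it coincides with the parallel hypersurface $\pa\Om_R:=\pa\{x\in\Om:\dist(x,\pa\Om)>R\}$, the normal map $y\mapsto y-R\,\nu(y)$ is a $C^1$ diffeomorphism of $\pa\Om$ onto $\Gamma$ (in particular $R\,\kappa_j<1$ on $\pa\Om$), and the nearest–point projection of $\Gamma$ is all of $\pa\Om$. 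Hence $\prod_{j=1}^{N-1}(1-R\,\kappa_j)$ is constant on the whole of $\pa\Om$, and the Aleksandrov–type symmetry argument already used in the proofs of Theorems~\ref{th:stationary isothermic} and~\ref{th:stationary isothermic cauchy} (see \cite{Strieste2016, SBessatsu2017}) forces $\pa\Om$, and with it $\Gamma=\pa\Om_R$, to be a sphere: $\Om=B_\rho(x_0)$ and $\Gamma=\pa B_{\rho-R}(x_0)$.

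\textbf{$D$ is a concentric ball.} With $\Om$ a ball, $\overline D$ lies well inside $\Gamma$ (by \eqref{near the boundary}), so in the spherical shell $\{\,\rho-R<|x-x_0|<\rho\,\}$ the conductivity is $\sigma_s$, the initial datum vanishes, the Dirichlet datum on $\pa\Om$ equals $1$, and — by \eqref{constant flow surface partially} — the Neumann datum $\sigma_s\pa_\nu u$ on $\Gamma$ depends on $t$ alone; by uniqueness of this mixed initial–boundary value problem $u$ is radially symmetric in that shell (in the Cauchy case one argues the same way in the exterior region $\{|x-x_0|>\rho-R\}$ with the radial transmission coefficient across $\pa\Om$). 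Since $\sigma\equiv\sigma_s$ on the connected open set $\Om\setminus\overline D$ and solutions of the heat equation with constant coefficients are real–analytic in the space variables, the vanishing of all angular derivatives of $u$ on the above shell propagates, so $u$ is radially symmetric in $\Om\setminus\overline D$ (resp.\ in $\RR^N\setminus\overline D$). In particular $u$ satisfies \eqref{stationary isothermic surface partially} on $\Gamma$, i.e.\ $\Gamma$ is a stationary isothermic surface of $u$; one then concludes as in the proofs of Theorems~\ref{th:stationary isothermic} and~\ref{th:stationary isothermic cauchy} — the radial symmetry of $u$ in $\Om\setminus\overline D$ (resp.\ $\RR^N\setminus\overline D$), combined with the transmission conditions on $\pa D$, forces $D$ to be a ball concentric with $\Om$.

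\textbf{Main obstacle.} The crux is the first step: proving the short–time expansion of $\sigma_s\,\pa_\nu u$ on $\Gamma$ with enough uniformity and a controlled remainder. One must differentiate the boundary–layer approximation of $u$ in the normal direction — noting that the leading contribution comes from differentiating the Gaussian factor, so that the curvature prefactor is the same as in the expansion of $u$ used for Theorems~\ref{th:stationary isothermic}–\ref{th:stationary isothermic cauchy} — and, for the Cauchy problem, handle the transmission layer at $\pa\Om$ between $\sigma_m$ and $\sigma_s$; one must also quantify, via \eqref{near the boundary}, that the two–phase correction produced by $\pa D$ is of strictly smaller exponential order on $\Gamma$. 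A secondary technical point is the last implication (``$u$ radial outside $\overline D$'' $\Rightarrow$ ``$D$ concentric ball''), which however proceeds exactly as in \cite{Strieste2016, SBessatsu2017}.
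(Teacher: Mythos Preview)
Your overall strategy --- extract geometric information from the short-time behavior near $\Gamma$, apply Aleksandrov, then propagate radial symmetry and pin down $D$ --- matches the paper's. But two steps differ in substantive ways, one of which is a genuine gap.

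\medskip
\textbf{The balance law replaces your derivative expansion.} You identify as the ``main obstacle'' a pointwise short-time expansion of $\sigma_s\,\pa_\nu u(x,t)$ on $\Gamma$, with uniform remainder. The paper bypasses this entirely. The key device is a \emph{balance law} (Lemma~\ref{le: balance law}): for $p,q\in\Gamma$ and a rotation $A$ with $A\nu(p)=\nu(q)$, the function $v(x,t)=u(x+p,t)-u(Ax+q,t)$ solves $v_t=\sigma_s\Delta v$ in $B_{r_0}(0)$ and, by \eqref{constant flow surface partially}, $\nabla v(0,t)\cdot\nu(p)=0$ for all $t$; the balance law of \cite{MSannals2002,MSmathz1999} then gives
\[
\int_{B_r(p)} u(y,t)\,(y-p)\cdot\nu(p)\,dy \ \ \text{is independent of } p\in\Gamma.
\]
This weighted heat content is then analyzed using the \emph{integral} asymptotics of Proposition~\ref{prop:heat content asymptotics}, already available from \cite{Strieste2016}, to force first $\dist(\cdot,\pa\Om)\equiv r_0$ on $\Gamma$ and then $\Pi_{\pa\Om}(r_0,\cdot)=$ const on the relevant piece of $\pa\Om$. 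So the paper never needs pointwise derivative asymptotics; it reduces to integral quantities where the machinery is in place. Your direct approach could in principle be made to work, but it is harder and you do not carry it out.

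\medskip
\textbf{Your passage from ``$\Gamma$ equidistant'' to ``$\Omega$ is a ball'' is flawed.} You assert that $\Gamma$ separates $\pa\Om$ from $\overline D$ and that the nearest-point projection of $\Gamma$ covers all of $\pa\Om$, and from this jump directly to $\Om=B_\rho(x_0)$. Neither claim follows: $\Gamma$ is one connected component of $\pa G$, and being at constant distance from $\pa\Om$ only makes it parallel to a \emph{single} connected component $\gamma$ of $\pa\Om$ (this is exactly Lemma~\ref{le: constant weingarten curvature}(3)). Aleksandrov then gives that $\gamma$ is a sphere, not that $\pa\Om$ is. The paper closes this gap differently: once $\gamma$ and $\Gamma$ are concentric spheres, real analyticity of $u$ in the connected set $\Om\setminus\overline D$, together with the overdetermined data on $\Gamma$, forces $u(\cdot,t)$ to be radial there; the boundary condition $u=1$ on $\pa\Om$ (respectively, Proposition~\ref{prop:the initial limits on the interface} for the Cauchy problem) then forces $\Om$ itself to be radially symmetric --- hence a ball \emph{or a spherical shell}. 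The shell case is ruled out by a separate argument based on Lemma~\ref{le: the unique determination}, which you do not address. Only then does Theorem~\ref{th:constant Neumann boundary condition} give that $D$ is a concentric ball.
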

With the aid of a simple observation on the initial behavior of the solution $u$ of problem \eqref{heat Cauchy}(see Proposition \ref{prop:the initial limits on the interface}) as in the proof of Theorem \ref{th:constant flow} for problem  \eqref{heat Cauchy}(see  Subsection \ref{subsection 3.3}), Theorems \ref{th:stationary isothermic} and \ref{th:stationary isothermic cauchy} combine to make a single theorem.
\begin{theorem}
\label{th:stationary isothermic surface} Let $u$ be the solution of either problem \eqref{heat equation initial-boundary}--\eqref{heat initial} or problem \eqref{heat Cauchy},  and let
$\Gamma$ be a connected component of $\partial G$ satisfying \eqref{nearest component}. 
\par
If there exists a function $a : (0, +\infty) \to (0, +\infty) $ satisfying \eqref{stationary isothermic surface partially},
then $\Omega$ and $D$ must be concentric balls.
\end{theorem}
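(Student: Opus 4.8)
The plan is to treat the two admissible choices of initial data — the initial--boundary value problem \eqref{heat equation initial-boundary}--\eqref{heat initial} and the Cauchy problem \eqref{heat Cauchy} — separately, and in each case to reduce the hypothesis \eqref{stationary isothermic surface partially} to a situation already settled. If $u$ solves \eqref{heat equation initial-boundary}--\eqref{heat initial} there is nothing to do: the assumptions are \emph{verbatim} those of Theorem~\ref{th:stationary isothermic}. Hence all the content is in the Cauchy case, for which part~(a) of Theorem~\ref{th:stationary isothermic cauchy} already covers the case when \eqref{stationary isothermic surface partially} holds on the whole of $\partial G$, and part~(b) the case when it holds only on a component $\Gamma$ obeying \eqref{nearest component} but with the extra assumption $\sigma_s=\sigma_m$. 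What is left is to remove that extra assumption, and this is exactly where Proposition~\ref{prop:the initial limits on the interface} enters.

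The first step would be to record, via Proposition~\ref{prop:the initial limits on the interface}, the leading short--time behaviour of the Cauchy solution $u$ near $\partial\Omega$. Since $\Gamma\subset\partial G$ and \eqref{near the boundary} forces $\dist(x,\partial\Omega)\le\dist(x,\overline{D})$ for $x\in\partial G$, the shortest diffusion path from the ``hot'' region $\Omega^c$ to a point $x\in\Gamma$ runs only through the shell; therefore, as $t\to0^+$, $u(\cdot,t)$ restricted to the shell is governed by the self--similar solution of the one--dimensional two--phase transmission problem with diffusivities $\sigma_m$ on the $\Omega^c$--side and $\sigma_s$ on the shell side. On the shell side that profile equals $c\,\Erfc\bigl(\dist(x,\partial\Omega)/(2\sqrt{\sigma_s t})\bigr)$ with $c=c(\sigma_s,\sigma_m)=\sqrt{\sigma_m}/(\sqrt{\sigma_m}+\sqrt{\sigma_s})>0$, so in particular
\[
 \lim_{t\to0^+}\bigl(-4t\log u(x,t)\bigr)=\frac{\dist(x,\partial\Omega)^2}{\sigma_s}\qquad\text{for }x\in\Gamma .
\]
Combining this with \eqref{stationary isothermic surface partially}, whose left--hand side is then independent of $x$, forces $\dist(\cdot,\partial\Omega)$ to be constant on $\Gamma$; call this value $R$. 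Thus $\Gamma$ is a parallel surface, at distance $R$, to a relatively open portion of $\partial\Omega$ — the same conclusion reached at the analogous point in the proof of Theorem~\ref{th:constant flow} for \eqref{heat Cauchy}.

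From here the argument is that of Theorems~\ref{th:stationary isothermic} and~\ref{th:stationary isothermic cauchy}: in the proof of Theorem~\ref{th:stationary isothermic cauchy}(b) the identity $\sigma_s=\sigma_m$ was used only to replace, near $\partial\Omega$, the Cauchy problem by an effective one--phase problem in the shell, and the asymptotics above provides precisely this reduction for arbitrary $\sigma_m$, the sole change being the harmless multiplicative constant $c$. Carrying it to the next order constrains the geometry of $\partial\Omega$ at the foot points of $\Gamma$, and then the stationary isothermic property of $\Gamma$, together with the transmission condition across $\partial D$, forces $\Omega$ and $D$ to be concentric balls, exactly as in \cite{Strieste2016,SBessatsu2017}. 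The main obstacle is this last passage, namely handling the outer boundary $\partial\Omega$ on which — unlike in the initial--boundary value problem — no Dirichlet datum is prescribed; it is Proposition~\ref{prop:the initial limits on the interface} that converts the near--$\partial\Omega$ behaviour into an effective one--phase initial--boundary value situation and thereby lets the proofs of Theorems~\ref{th:stationary isothermic} and~\ref{th:stationary isothermic cauchy} be reused.
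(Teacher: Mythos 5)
Your reduction — the initial--boundary value case is exactly Theorem~\ref{th:stationary isothermic}, and for the Cauchy case the task is to drop $\sigma_s=\sigma_m$ from part~(b) of Theorem~\ref{th:stationary isothermic cauchy} using Proposition~\ref{prop:the initial limits on the interface} — is the right skeleton and matches the paper. However, the mechanism you assign to Proposition~\ref{prop:the initial limits on the interface} is not what it says. That proposition asserts only that $u(\cdot,t)\to\frac{\sqrt{\sigma_m}}{\sqrt{\sigma_s}+\sqrt{\sigma_m}}$ uniformly \emph{on $\partial\Omega$} as $t\to0^+$; it does not give (and cannot be stretched into) a Varadhan-type decay rate $\lim_{t\to0^+}\bigl(-4t\log u(x,t)\bigr)=\dist(x,\partial\Omega)^2/\sigma_s$ for interior points $x\in\Gamma$. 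In the paper, the fact that $\Gamma$ lies at constant distance from a component $\gamma$ of $\partial\Omega$ and that $\gamma,\Gamma$ are concentric spheres is obtained \emph{not} from Proposition~\ref{prop:the initial limits on the interface} but from \cite[Lemma~2.4]{Strieste2016} (which is the stationary-isothermic analogue of Lemma~\ref{le: constant weingarten curvature} and rests on the heat-content asymptotics of Proposition~\ref{prop:heat content asymptotics}) combined with Aleksandrov's sphere theorem. Your first step therefore asserts a correct conclusion by an unsupported route.

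The genuine role of Proposition~\ref{prop:the initial limits on the interface} — the reason it removes the hypothesis $\sigma_s=\sigma_m$ — is at a step your write-up omits. Once $\gamma$ and $\Gamma$ are concentric spheres, analyticity gives radial symmetry of $u$ on $\overline{\Omega}\setminus D$, but one must still show that \emph{every} connected component of $\partial\Omega$ is a sphere with the same center. If $\hat\gamma$ were a non-spherical component, pick $\rho>0$ and $p\in\hat\gamma\cap\partial B_\rho$, $q\in(\Omega\setminus\overline D)\cap\partial B_\rho$; radial symmetry forces $u(p,t)=u(q,t)$ for all $t>0$, yet Proposition~\ref{prop:the initial limits on the interface} gives $u(p,t)\to\frac{\sqrt{\sigma_m}}{\sqrt{\sigma_s}+\sqrt{\sigma_m}}>0$ while Lemma~\ref{le:initial behavior and decay at infinity}~(2) gives $u(q,t)\to0$, a contradiction. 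That is precisely the argument in Subsection~\ref{subsection 3.3} that Proposition~\ref{prop:the initial limits on the interface} makes available for arbitrary $\sigma_m$, and without it one cannot rule out a non-concentric boundary component of $\Omega$, nor proceed to the ball/spherical-shell dichotomy, Lemma~\ref{le: the unique determination}, and Theorem~\ref{th:constant Neumann boundary condition}. Because you neither state this nor give a substitute, the proposal has a genuine gap.
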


A second kind of result concerns multi-phase heat conductors where a connected component of $\partial\Omega$  is a surface of the constant flow property or a stationary isothermic surface. We obtain three symmetry theorems, one for the Cauchy-Dirichlet problem (Theorem \ref{th:constant flow serrin}) and two for the Cauchy problem (Theorems \ref{th:stationary isothermic three-phase} and \ref{th:stationary isothermic on the boundary for cauchy}), with different regularity assumptions.


\begin{theorem}
\label{th:constant flow serrin} Let $u$ be the solution of problem \eqref{heat equation initial-boundary}--\eqref{heat initial},  and let
$\Gamma$ be a connected component of $\partial\Omega$. Suppose that $\Gamma$ is of class $C^6$.  
\par
If there exists a function $d : (0, +\infty) \to \mathbb R $ satisfying \eqref{constant flow surface partially},
then $\Omega$ and $D$ must be concentric balls. 
\end{theorem}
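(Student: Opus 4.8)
The plan is to combine a short‑time asymptotic analysis of the heat flux on $\Gamma$ with a symmetry‑propagation argument. First I would examine $\sigma_s\,\pa_\nu u(x,t)$ as $t\to 0^+$ for $x\in\Gamma\subset\pa\Omega$. Since $\overline D$ lies at a positive distance from $\pa\Omega$, heat issuing from $\pa\Omega$ reaches the core only after a time of order $\dist(\pa\Omega,\overline D)^2$; hence the core (and the value of $\sigma_c$) influences $u$ near $\pa\Omega$ only through corrections that are $O(e^{-c/t})$, beyond every power of $\sqrt t$. Thus near $\pa\Omega$ the flux has the same $\sqrt t$‑expansion as for the one‑phase problem $u_t=\sigma_s\Delta u$ in $\Omega$, namely
\begin{equation*}
\sigma_s\,\pa_\nu u(x,t)=\sqrt{\frac{\sigma_s}{\pi\,t}}\,\Bigl(1+c_1\,H(x)\sqrt{\sigma_s\,t}+O(t)\Bigr),\qquad x\in\Gamma,
\end{equation*}
where $H$ is the mean curvature of $\pa\Omega$ and $c_1\neq 0$ is a universal constant — this is the boundary version of the classical heat‑content asymptotics, and the $C^6$ regularity is what licenses the expansion (with remainder uniform in $x$) to the order needed. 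The constant flow property \eqref{constant flow surface partially} forces every coefficient in this expansion to be constant on $\Gamma$; reading off the first $x$‑dependent term gives $H\equiv\mathrm{const}$ on $\Gamma$, so by Aleksandrov's soap‑bubble theorem (applicable since $\Gamma$ is a compact connected embedded hypersurface) $\Gamma$ is a sphere, say $\Gamma=\pa B_R(x_0)$.

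Next I would propagate this spherical symmetry inward. Because $\Gamma$ is a round sphere on which both $u\equiv 1$ and $\sigma_s\,\pa_\nu u=d(t)$ are rotationally invariant, for every rotation $\rho$ about $x_0$ the function $u-u\circ\rho$ solves the heat equation $w_t=\sigma_s\Delta w$ with vanishing Cauchy data on $\Gamma$ in a thin interior spherical shell $\{R-\ve<|x-x_0|<R\}$, a region contained in the phase $\Omega\setminus\overline D$ for $\ve$ small; uniqueness in the Cauchy problem for the heat equation then gives $u=u\circ\rho$ there, i.e.\ $u$ is radially symmetric about $x_0$ near $\Gamma$. Consequently, on a slightly smaller concentric sphere $\Gamma'=\pa B_{R'}(x_0)$ lying in that shell, $\pa_\nu u$ is constant at each time, so $\Gamma'$ has the constant flow property; once one checks that $\Gamma'$ can be chosen to satisfy the hypotheses of Theorem~\ref{th:constant flow} (in particular that $\Omega$ is the ball $B_R(x_0)$ and $\overline D\subset B_{R'}(x_0)$ away from $\Gamma$), that theorem yields at once that $\Omega$ and $D$ are concentric balls. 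The required structural facts follow by spreading the radial symmetry of $u$ further inward by parabolic unique continuation and comparing the transmission of $u$ across $\pa D$ (and its behaviour along $\pa\Omega$) with the smoothness of the rotated solutions, together with the hypotheses $\overline D\subset\Omega$ and $\Omega\setminus\overline D$ connected, which exclude annular cores and interior cavities.

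The delicate points are two. The first is Step~1: making the boundary flux expansion rigorous up to the mean‑curvature term with a remainder uniform in $x\in\Gamma$, and justifying that the core's contribution there is genuinely exponentially small — this is where the strong ($C^6$) boundary regularity is used, together with a careful localization near $\pa\Omega$. The second is the bookkeeping in Step~2: the open sets on which $u$ and a rotated copy $u\circ\rho$ are simultaneously solutions of the same constant‑coefficient heat equation need not a priori be connected, so before the unique continuation can be run all the way to the core one must control the topology of $\Omega\setminus(\overline D\cup\rho(\overline D))$ and rule out extra components of $\pa\Omega$; once that is done, concentricity of $\Omega$ and $D$ is immediate.
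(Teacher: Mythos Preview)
Your overall plan --- short-time asymptotics of the boundary flux $\Rightarrow$ constant mean curvature $\Rightarrow$ $\Gamma$ a sphere by Aleksandrov, then propagate radial symmetry inward --- is exactly the architecture of the paper's proof. The implementation of Step~1 differs: rather than invoking a ready-made small-$t$ expansion of $\sigma_s\,\pa_\nu u$, the paper passes to the Laplace transform $w(x,\lambda)=\lambda\int_0^\infty e^{-\lambda t}u(x,t)\,dt$ and constructs explicit WKB-type barriers $f_\pm$ in a tubular neighborhood of $\Gamma$; comparing $w$ with $f_\pm$ and sending $\lambda\to\infty$ yields that $\Delta\delta=-\sum\kappa_j$ is constant on $\Gamma$. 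The $C^6$ hypothesis enters precisely here, to bound $\Delta A_\pm$ (which involves fourth derivatives of $\delta$). Your direct time-domain route is equivalent in principle, but you would have to produce the expansion with a remainder uniform in $x\in\Gamma$, which is essentially the same analytic work.

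Step~2, however, has a genuine gap. Your plan is to pick a nearby concentric sphere $\Gamma'$ and feed it into Theorem~\ref{th:constant flow}. But you yourself note that this requires ``in particular that $\Omega$ is the ball $B_R(x_0)$'', which is a large part of the conclusion you are after; the hypotheses \eqref{near the boundary}--\eqref{nearest component} of Theorem~\ref{th:constant flow} cannot be verified without already knowing the global shape of $\Omega$. The fallback you sketch (spread radial symmetry inward by unique continuation, control the topology of $\Omega\setminus(\overline D\cup\rho(\overline D))$) is the right idea but is where the real content lies, and it does not by itself exclude the spherical-shell alternative.

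What the paper does instead is cleaner and avoids Theorem~\ref{th:constant flow} altogether. Once $\Gamma$ is a sphere, the overdetermined Cauchy data $w=1$, $\sigma_s\,\pa_\nu w=d_0(\lambda)$ on $\Gamma$ and the \emph{elliptic} equation $\sigma_s\Delta w-\lambda w=0$ in $\Omega\setminus\overline D$ (connected) force $w(\cdot,\lambda)$ --- hence $U=1-w(\cdot,1)$ --- to be radially symmetric throughout $\overline\Omega\setminus D$, by uniqueness in the Cauchy problem for elliptic equations. Since $U=0$ on $\partial\Omega$, radial symmetry gives that $\Omega$ is a ball or a spherical shell. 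The shell is then ruled out by the uniqueness-of-reconstruction Lemma~\ref{le: the unique determination}, and in the ball case Theorem~\ref{th:constant Neumann boundary condition} yields that $D$ is a concentric ball. These last two ingredients (Lemma~\ref{le: the unique determination} and Theorem~\ref{th:constant Neumann boundary condition}) are the tools you are missing; the detour through Theorem~\ref{th:constant flow} does not replace them.
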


When $D=\varnothing$, $\Ga=\pa\Omega$ and $\sigma$ is constant on $\mathbb R^N$,  the same overdetermined boundary condition of Theorem \ref{th:constant flow serrin} has been introduced in \cite{AG1989ARMA, GS2001pams} and similar symmetry theorems have been proved by the method of moving planes introduced by \cite{Se1971} and \cite{Alek1958vestnik}.  Theorem  \ref{th:constant flow serrin} gives a new symmetry result for two-phase heat conductors, in which that method cannot be applied.
Recently,  an analogous problem was re-considered in  \cite{Sav2016} in the context of the heat flow in smooth Riemannian manifolds: it was shown that the same overdetermined boundary condition implies that $\pa\Omega$ must be an isoparametric surface (and hence $\pa \Omega$ is a sphere if compactness is assumed). We remark that the methods introduced in \cite{Sav2016} cannot be directly applied to our two-phase setting due to a lack of regularity.    


\begin{theorem}
\label{th:stationary isothermic three-phase} Let $u$ be the solution of problem \eqref{heat Cauchy},  and let
$\Gamma$ be a connected component of $\partial\Omega$. Suppose that $\Gamma$ is of class $C^6$.  
\par
If there exists a function  $a : (0, +\infty) \to (0, +\infty) $ satisfying \eqref{stationary isothermic surface partially},
then $\Omega$ and $D$ must be concentric balls. 
\end{theorem}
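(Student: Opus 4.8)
The plan is to reduce Theorem~\ref{th:stationary isothermic three-phase} to Theorem~\ref{th:constant flow serrin} by showing that, for the Cauchy problem \eqref{heat Cauchy}, a connected component $\Gamma$ of $\partial\Omega$ that is a stationary isothermic surface (in the sense of \eqref{stationary isothermic surface partially}) is automatically a surface of the constant flow property (in the sense of \eqref{constant flow surface partially}). The key obstacle is that $u$ is not classically differentiable across $\partial\Omega$ in the Cauchy setting, since $\sigma$ jumps there and the initial datum $\mathcal X_{\Omega^c}$ is discontinuous across $\Gamma$; so ``constant flow'' must be interpreted via the transmission condition and the traces of the normal derivative from the two sides coincide up to the conductivity ratio. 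The heart of the argument is therefore a local regularity/asymptotic analysis near $\Gamma$.

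First I would localize: fix a point $x_0 \in \Gamma$ and choose a small neighborhood in which $\Gamma$, being $C^6$, can be flattened; inside this neighborhood the only interface is $\Gamma$ itself (the core $D$ stays away). I would then use the standard short-time asymptotic expansion for solutions of the heat (diffusion) equation with a jump in conductivity across a smooth hypersurface: as $t\to 0^+$, $u(x,t)$ near $\Gamma$ behaves, to leading order, like a one-dimensional error-function profile in the signed distance variable $s=\dist(x,\Gamma)$ (with the sign chosen so $s>0$ outside $\Omega$), modulated by geometric quantities of $\Gamma$ (its mean curvature) and by the conductivities $\sigma_s,\sigma_m$. Concretely one has an expansion of the type $u(x,t)=U_0(s/\sqrt t)+\sqrt t\,U_1(s/\sqrt t)\,H(x')+o(\sqrt t)$ where $H$ is the mean curvature of $\Gamma$ at the foot point $x'$ of $x$, and similarly $\sigma\,\pa_\nu u$ on $\Gamma$ expands as $t^{-1/2}$ times a universal constant plus a term proportional to $H(x')$ plus lower order. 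This is the analogue, in the two-phase/Cauchy context, of the computations in \cite{Strieste2016, SBessatsu2017}; I would cite Proposition~\ref{prop:the initial limits on the interface} (the ``simple observation on the initial behavior'' referred to right after Theorem~\ref{th:stationary isothermic surface}) to extract the precise leading terms.

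Next, I would exploit the hypothesis: if $u(x,t)=a(t)$ for all $x\in\Gamma$, then $u$ is spatially constant on $\Gamma$, so all its tangential derivatives along $\Gamma$ vanish; combined with the short-time expansion this forces the $H$-dependent term in the flux expansion to have vanishing tangential variation, hence $\sigma_s\,\pa_\nu u(x,t)$ depends on $t$ only (to leading order, then to all orders by bootstrapping the expansion), i.e.\ \eqref{constant flow surface partially} holds on $\Gamma$. Equivalently, since $u\equiv a(t)$ on $\Gamma$ kills the tangential part of the gradient, the normal derivative there can be recovered from the one-dimensional parabolic problem governed by the value $a(t)$ and the curvature, and the overdetermination \eqref{stationary isothermic surface partially} pins the curvature profile enough to conclude the flux is $x$-independent on $\Gamma$. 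Once \eqref{constant flow surface partially} is established for the Cauchy problem on $\Gamma$, Theorem~\ref{th:constant flow serrin} (whose proof, as asserted there, works in the two-phase Cauchy setting as well — or, if stated only for the Cauchy--Dirichlet problem, I would run the analogous moving-planes/initial-behavior argument used in Subsection~\ref{subsection 3.3}) immediately yields that $\Omega$ and $D$ are concentric balls.

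The main obstacle I anticipate is precisely the rigorous justification of the short-time asymptotics up to the order at which the mean curvature appears, with uniform control near $\Gamma$ and across the conductivity jump: one must handle the transmission condition $\sigma_s\pa_\nu u^-=\sigma_m\pa_\nu u^+$ carefully, control the influence of the far-away core $D$ and of the rest of $\partial\Omega$ (which only contribute exponentially small corrections for small $t$ by domain-monotonicity/heat-kernel bounds), and ensure enough regularity of $u$ up to $\Gamma$ — this is exactly where the $C^6$ hypothesis on $\Gamma$ is used, to have the expansion valid to sufficiently high order. Modulo that technical core, the deduction is a clean reduction to the already-treated constant-flow case.
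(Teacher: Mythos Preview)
Your reduction has a genuine gap. Knowing that the short-time asymptotic expansion of $\sigma_s\,\partial_\nu u(\cdot,t)$ on $\Gamma$ is $x$-independent to every order does \emph{not} imply that $\sigma_s\,\partial_\nu u(x,t)$ is $x$-independent for all $t>0$: the expansion is only asymptotic as $t\to 0^+$, the solution is not analytic in $t$ at $t=0$, and no uniqueness principle is offered to bridge that gap. The ``bootstrapping'' you allude to would at best give you that the mean curvature (and then successively higher curvature invariants) of $\Gamma$ are constant --- which already makes the detour through constant flow unnecessary --- but it does not yield \eqref{constant flow surface partially} for all $t$. And even if it did, Theorem~\ref{th:constant flow serrin} is stated and proved only for problem \eqref{heat equation initial-boundary}--\eqref{heat initial}; there is no ready-made constant-flow-on-$\partial\Omega$ result for the Cauchy problem to invoke, and Subsection~\ref{subsection 3.3} treats $\Gamma$ in the shell, not on $\partial\Omega$.

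The paper avoids the reduction entirely. It passes to the Laplace transform $w(x,\lambda)=\lambda\int_0^\infty e^{-\lambda t}u(x,t)\,dt$, so that $w=\widetilde a(\lambda)$ on $\Gamma$, and constructs the barriers $\widetilde a(\lambda)f_\pm\pm\psi\,e^{-\eta\sqrt\lambda}$ of Subsections~\ref{4.2}--\ref{4.3} for $w$ in an \emph{inner} tubular neighborhood of $\Gamma$, together with analogous barriers for $1-w$ in an \emph{outer} neighborhood (with $\sigma_m$ in place of $\sigma_s$). These yield two asymptotic relations as $\lambda\to\infty$, one for $\sigma_s(\partial_\nu w)_-$ and one for $\sigma_m(\partial_\nu w)_+$, each containing $\sum_j\kappa_j$ at the point in question. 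The transmission condition $\sigma_s(\partial_\nu w)_-=\sigma_m(\partial_\nu w)_+$ on $\Gamma$ then allows one to subtract and eliminate the unknown normal derivative, leaving an identity whose right-hand side is $x$-independent; hence the mean curvature is constant on $\Gamma$, and Aleksandrov's theorem makes $\Gamma$ a sphere. The remainder follows the radial-symmetry argument of Subsection~\ref{subsection 3.3}. The $C^6$ hypothesis enters exactly where you guessed (to bound $\Delta A_\pm$), but the device you are missing is this \emph{two-sided} barrier construction coupled with the transmission condition, which rigorously replaces the heuristic short-time expansion and makes the passage to constant flow unnecessary.
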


The $C^6$-regularity assumption of Theorems \ref{th:constant flow serrin} and \ref{th:stationary isothermic three-phase} does not seem very optimal, but it is needed to construct the barriers where we use the fourth derivatives of the distance function to the boundary.  It can instead be removed for
problem \eqref{heat Cauchy}, in the particular the case in which  $\sigma_s=\sigma_m$. This can be done by complementing the proof of Theorem \ref{th:constant flow serrin} with the techniques developed in \cite{MPS2006tams}.


\begin{theorem}
\label{th:stationary isothermic on the boundary for cauchy} Set $\sigma_s=\sigma_m$ and let $u$ be the solution of problem \eqref{heat Cauchy}.  Let $\Gamma$ be a connected component of $\partial\Omega$.
\begin{itemize}
\item[\rm (a)] If there exists a function $a : (0, +\infty) \to (0,+\infty)$ satisfying \eqref{stationary isothermic surface partially}, then  $\Omega$ and $D$ must be concentric balls.
\item[\rm (b)] If $N\ge 3$, suppose that $\Gamma$ is strictly convex. If there exists a function $d : (0, +\infty) \to \mathbb R$ satisfying \eqref{constant flow surface partially},  then  $\Omega$ and $D$ must be concentric balls.
\end{itemize}
\end{theorem}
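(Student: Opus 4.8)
The plan is to preserve the architecture of the proof of Theorem~\ref{th:constant flow serrin}: first turn the overdetermined condition on $\Gamma$ into the statement that a suitable curvature function of $\partial\Omega$ is constant along $\Gamma$, then use Aleksandrov's theorem to conclude that $\Gamma$ is a sphere, and finally run the remaining arguments of Theorems~\ref{th:constant flow serrin}--\ref{th:stationary isothermic three-phase}, which pass from the sphericity of one component of $\partial\Omega$ to the concentricity of $\Omega$ and $D$ and do not use high regularity. The essential new input is that, since $\sigma_s=\sigma_m$, the operator $w\mapsto\dv(\sigma\nabla w)$ has the \emph{constant} coefficient $\sigma_s$ on a full neighborhood of $\partial\Omega$ in $\mathbb R^N$; hence near $\partial\Omega$ the solution $u$ of \eqref{heat Cauchy} solves the plain heat equation and is spatially smooth for $t>0$. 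This lets us replace the distance-function barriers used in Theorem~\ref{th:constant flow serrin} (the source of its $C^6$-assumption) by the short-time heat-kernel asymptotics of \cite{MPS2006tams}, available already under the standing $C^2$-regularity. To decouple $\partial\Omega$ from the core, let $v$ solve $v_t=\sigma_s\Delta v$ on $\mathbb R^N\times(0,\infty)$ with $v(\cdot,0)=\mathcal X_{\Omega^c}$; then $u-v$ satisfies a heat equation whose source is supported in $\overline D$, and since $\dist(\partial\Omega,\overline D)=:\delta_0>0$ Gaussian bounds give $|u-v|+|\nabla u-\nabla v|=O(e^{-c/t})$ near $\partial\Omega$ for some $c>0$. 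Thus the short-time expansions of $u$ and $\sigma_s\pa_\nu u$ on $\Gamma$ coincide, up to errors beyond all powers of $t$, with those of $v$.

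For part~(a) I would use the uniform boundary expansion $v(x,t)=\tfrac12+c\,\sqrt{\sigma_s t}\;\bigl(\kappa_1(x)+\dots+\kappa_{N-1}(x)\bigr)+o(\sqrt t)$ as $t\to0^+$, $x\in\Gamma$, with $c\neq0$ a dimensional constant and $\kappa_1,\dots,\kappa_{N-1}$ the principal curvatures of $\partial\Omega$ with respect to the inner normal; this comes from writing $v(x,t)=\int_{\Omega^c}K_{\sigma_s t}(x-y)\,dy$ and measuring the thin region enclosed between $\partial\Omega$ and its tangent hyperplane at $x$. Comparing the $\sqrt t$-coefficients in \eqref{stationary isothermic surface partially}, the sum $\sum_i\kappa_i$ is constant on $\Gamma$, i.e.\ $\Gamma$ has constant mean curvature. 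A compact connected embedded $C^2$ hypersurface of constant mean curvature is, after Schauder bootstrapping, real-analytic, hence a sphere by Aleksandrov's soap-bubble theorem; so $\Gamma$ is a sphere and the remaining arguments of Theorem~\ref{th:stationary isothermic three-phase} apply. (When instead $u$ solves \eqref{heat equation initial-boundary}--\eqref{heat initial}, the same scheme works with the corresponding Cauchy--Dirichlet boundary asymptotics; cf.\ Proposition~\ref{prop:the initial limits on the interface} and Theorem~\ref{th:stationary isothermic surface}.)

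For part~(b) the flux requires one more order. Starting from $\nabla v(x,t)=\int_{\partial\Omega}K_{\sigma_s t}(x-y)\,\nu(y)\,dS_y$ and the exact identity $\bigl(\nu(x)\cdot\nu(y)\bigr)\,dS_y=d\xi'$ in graph coordinates $y=x+(\xi',g(\xi'))$ at $x$, one obtains $\sigma_s\,\pa_\nu v(x,t)=\sigma_s(4\pi\sigma_s t)^{-N/2}\int_{\mathbb R^{N-1}}e^{-(|\xi'|^2+g(\xi')^2)/(4\sigma_s t)}\,d\xi'$ up to $O(e^{-c/t})$, whose leading term $\sim C_N\,t^{-1/2}$ carries no geometric information. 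Expanding $e^{-g(\xi')^2/(4\sigma_s t)}$, the next term is of order $t^{1/2}$ and its coefficient is, up to a nonzero dimensional constant, a \emph{positive definite} quadratic form in the principal curvatures, of the form $Q(x)=c_1\sum_i\kappa_i(x)^2+c_2\bigl(\sum_i\kappa_i(x)\bigr)^2$ with $c_1,c_2>0$. Hence \eqref{constant flow surface partially} forces $Q$ to be constant on $\Gamma$. When $N=2$ this immediately gives $\kappa_1$ constant, hence a circle; when $N\ge3$ one uses here the strict convexity of $\Gamma$ and combines the constancy of $Q$ with the Minkowski integral formulas and the Newton--Maclaurin inequalities for convex hypersurfaces, in the spirit of \cite{MPS2006tams}, to deduce that $\Gamma$ is totally umbilical, hence a sphere. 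One then concludes as in the proof of Theorem~\ref{th:constant flow serrin}.

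The main obstacle is exactly this second-order analysis of the heat flux in part~(b). Contrary to the isothermic case, the principal short-time term of $\pa_\nu u$ on $\partial\Omega$ is shape-independent, so one must push the asymptotic expansion one order further on a merely $C^2$ hypersurface, correctly isolate the quadratic curvature combination that appears, and then extract sphericity from the constancy of a \emph{quadratic} (not linear) function of the principal curvatures---which is precisely why part~(b) needs both $N\ge3$ and strict convexity, whereas part~(a) does not. A lesser but genuine point is to justify the remainder estimates in the expansions of $v$ and $\pa_\nu v$ uniformly on $\Gamma$ under only $C^2$-regularity, and to bootstrap the resulting constant-curvature surface up to analyticity before invoking Aleksandrov's theorem; here one again relies on the fact that the equation has constant coefficients near $\partial\Omega$.
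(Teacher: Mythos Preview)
Your overall architecture is sound and you correctly identify the key advantage of the hypothesis $\sigma_s=\sigma_m$: near $\partial\Omega$ the equation has constant coefficients, so heat-kernel expansions replace the $C^6$ barriers. Your heat-kernel computations do produce the right curvature invariants --- the mean curvature in~(a) and, in~(b), the quadratic form $3\sum_i\kappa_i^2+2\sum_{i<j}\kappa_i\kappa_j$, which is exactly what the paper calls $C(p)$.

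The paper, however, takes a different route that avoids both the approximation $u\approx v$ and the delicate uniform-in-$C^2$ heat-kernel expansions you flag as obstacles. Instead of expanding $u$ or $\partial_\nu u$ directly in $t$, the paper applies the balance law (\cite{MSannals2002}, \cite{MSmathz1999}) to $u(\cdot+p,t)-u(\cdot+q,t)$ (or its directional derivative) on a small ball where $\sigma\equiv\sigma_s$, and then lets $t\to 0^+$. This yields \emph{exact} geometric identities valid for all small $r$: for~(a) the uniformly-dense condition $|\Omega^c\cap B_r(p)|=|\Omega^c\cap B_r(q)|$, and for~(b) the moment identity $\nu(p)\cdot\int_{\Omega^c\cap B_r(p)}(x-p)\,dx=\nu(q)\cdot\int_{\Omega^c\cap B_r(q)}(x-q)\,dx$. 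Only then does one perform an asymptotic expansion, now purely geometric and in the parameter $r$, which under $C^2$ regularity is handled exactly by the techniques of \cite{MPS2006tams}. For~(a) this plugs directly into \cite[Theorem~1.2]{MPS2006tams}; for~(b) the $r$-expansion of the moment produces $C(p)$ at order $r^{N+3}$. The balance-law detour thus buys you exact identities in place of your $O(e^{-c/t})$ approximations and cleanly separates the PDE step from the geometric one.

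One genuine weak point in your proposal is the final step of~(b). The paper does \emph{not} use Minkowski formulas or Newton--Maclaurin inequalities; it observes that $\partial C/\partial\kappa_j=4\kappa_j+2\sum_i\kappa_i>0$ under strict convexity, so $C(\kappa)=\mathrm{const}$ is an \emph{elliptic} Weingarten relation in the sense of Aleksandrov \cite{Alek1958vestnik}, and his sphere theorem applies directly. Your suggested route through Minkowski identities and Newton--Maclaurin is not obviously applicable to the non-elementary-symmetric function $Q=2|\kappa|^2+H^2$, and you do not indicate how the argument would go; this is a gap you should close --- most simply by invoking Aleksandrov as the paper does.
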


\vskip 2ex
The rest of the paper is organized as follows. 
Section \ref{section6} is devoted to the proof of Theorem \ref{th:a counterexample to the symmetry for Serrin-type overdetermined problems}, which is a combination of the implicit function theorem and techniques pertaining to the realm of {\it shape optimization}.
In Section \ref{section2} we give some preliminary notations and recall some useful results from \cite{Strieste2016, SBessatsu2017}. In Section \ref{section3}, we shall carry out the proofs of Theorems \ref{th:constant flow} and \ref{th:stationary isothermic surface}, based on a balance law, the short-time behaviour of the solution, and on the study of a related elliptic problem. The proof of Theorem \ref{th:constant flow serrin} will be performed in Section \ref{section4}: the relevant parabolic problem will be converted into a family of elliptic ones, by a Laplace transform, and new suitable barriers controlled by geometric parameters of the conductor  will be constructed for the transformed problem.  The same techniques will also be used in Subsection \ref{subsection4.5} to prove Theorem \ref{th:stationary isothermic three-phase}. Section \ref{section5} contains the proof of Theorem \ref{th:stationary isothermic on the boundary for cauchy}: here, due to the more favorable structure of the Cauchy problem in hand, we are able to use the techniques of \cite{MPS2006tams} to obtain geometrical information.

\setcounter{equation}{0}
\setcounter{theorem}{0}

\section{Non-uniqueness for a two-phase Serrin's problem}
\label{section6}

Here, the proof of Theorem \ref{th:a counterexample to the symmetry for Serrin-type overdetermined problems} will be obtained by a perturbation argument.

Let $D$, $\Omega\subset\rn$ be two bounded domains of class $C^{2,\al}$ with $\overline{D}\subset \Omega$. 
We look for a pair $(D,\Om)$ for which the overdetermined problem \eqref{modified poisson and Hermholtz equation with overdetermined boundary conditions}  has a solution for some negative constant $d_0$. By evident normalizations, it is sufficient to examine \eqref{modified poisson and Hermholtz equation with overdetermined boundary conditions} with $\sg_s=1$ in the form
\begin{eqnarray}
&&\dv(\sigma \nabla u)=\beta u - \gamma<0\quad\mbox{ in }\ \Omega, \label{pbcava eq}
\\
&&u=0  \ \quad\qquad\qquad\qquad\qquad \mbox{ on } \partial\Omega, \label{pbcava dirichlet}
\\ 
&&\partial_\nu u=-\La  \  \quad\qquad\qquad\qquad \mbox{ on } \pa\Omega,\label{pbcava neumann}
\end{eqnarray}
where  $\be\ge 0$, $\ga>0$, and $\sg=\sg_c {\mathcal X}_D+{\mathcal X}_{\Om\setminus D}$.
By the divergence theorem, the constant $\La$ is related to the other data of the problem by the formula:
\begin{equation}
\label{whatisd}
\La=\frac1{|\pa\Om|}\left\{\ga\,|\Om|-\beta\,\int_{\Om} u\,dx\right\};
\end{equation}
here, the bars indifferently denote the volume of $\Om$ and the $(N-1)$-dimensional Hausdorff measure of $\partial\Omega$.
\par
It is obvious that, for all values of $\sg_c>0$, the pair $(B_R, B_1)$ in the assumptions of the theorem is a solution to the overdetermined problem \eqref{pbcava eq}--\eqref{pbcava neumann} for some $\La$.
We will look for other solution pairs of \eqref{pbcava eq}--\eqref{pbcava neumann} near $(B_R, B_1)$ by a perturbation argument which is based on the following version of the implicit function theorem, for the proof of which we refer to \cite[Theorem 2.7.2, pp. 34--36]{Nams2001}.


\renewcommand*{\thetheorem}{\Alph{theorem}}
\setcounter{theorem}{2}


\begin{theorem}[Implicit function theorem]
\label{ift}
Suppose that $\mathcal{F}$, $\mathcal{G}$ and $\mathcal{H}$ are three Banach spaces, $U$ is an open subset of  $\mathcal{F}\times\mathcal{G}$, $(f_0,g_0)\in U$, and $\Psi:U\to\mathcal{H}$ is a Fr\'echet differentiable mapping such that $\Psi(f_0,g_0)=0$. Assume that the partial derivative $\pa_f\Psi(f_0,g_0)$ of $\Psi$ with respect to $f$ at $(f_0,g_0)$ is a bounded invertible linear transformation from  $\cF$ to $\cH$. 
\par
Then there exists an open neighborhood $U_0$ of $g_0$ in $\mathcal{G}$ such that
there exists a unique Fr\'echet differentiable function $f:U_0\to \mathcal{F}$ such that $f(g_0)=f_0$, $(f(g),g)\in U$ and $\Psi(f(g),g)=0$ for all $g\in U_0$.  
\end{theorem}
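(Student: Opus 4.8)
The plan is to derive this from the Banach contraction mapping principle, in the classical way; I interpret the hypothesis as continuous Fréchet differentiability of $\Psi$, as is standard for this statement. Set $L:=\pa_f\Psi(f_0,g_0)$; by assumption $L\colon\mathcal{F}\to\mathcal{H}$ is bounded and invertible, so $L^{-1}\colon\mathcal{H}\to\mathcal{F}$ is bounded, and since $L^{-1}$ is injective, for fixed $g$ near $g_0$ the equation $\Psi(f,g)=0$ is equivalent to $f$ being a fixed point of $T_g(f):=f-L^{-1}\Psi(f,g)$. The heart of the matter is to show that, on a small closed ball $\overline{B}(f_0,r)\subset\mathcal{F}$ and for $g$ in a small neighborhood $U_0\ni g_0$, the map $T_g$ is a $\tfrac12$-contraction of $\overline{B}(f_0,r)$ into itself, with contraction constant independent of $g$.

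For the contraction estimate I would use the continuity of $\pa_f\Psi$ near $(f_0,g_0)$ to choose $r>0$ and a neighborhood $U_1\ni g_0$ with $\|\pa_f\Psi(f,g)-L\|\le\frac{1}{2\|L^{-1}\|}$ whenever $(f,g)\in\overline{B}(f_0,r)\times U_1$; since the Fréchet derivative of $T_g$ in $f$ equals $I-L^{-1}\pa_f\Psi(f,g)=L^{-1}\bigl(L-\pa_f\Psi(f,g)\bigr)$, it has norm $\le\tfrac12$ on the ball, and the mean value inequality over the convex set $\overline{B}(f_0,r)$ gives $\|T_g(f_1)-T_g(f_2)\|\le\tfrac12\|f_1-f_2\|$ there. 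For the self-mapping property I would use $\Psi(f_0,g_0)=0$ and the continuity of $g\mapsto\Psi(f_0,g)$ to shrink to $U_0\subseteq U_1$ with $\|L^{-1}\Psi(f_0,g)\|\le r/2$ for $g\in U_0$; then, for $f\in\overline{B}(f_0,r)$,
\[
\|T_g(f)-f_0\|\le\|T_g(f)-T_g(f_0)\|+\|L^{-1}\Psi(f_0,g)\|\le\tfrac12\,r+\tfrac12\,r=r .
\]
The contraction mapping principle then provides, for each $g\in U_0$, a unique $f(g)\in\overline{B}(f_0,r)$ with $\Psi(f(g),g)=0$; plainly $f(g_0)=f_0$, and shrinking $U_0$ once more one keeps $(f(g),g)\in U$.

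To establish continuity of $g\mapsto f(g)$ I would exploit the same contraction: writing $f(g)-f(g')=\bigl(T_g(f(g))-T_g(f(g'))\bigr)+\bigl(T_g(f(g'))-T_{g'}(f(g'))\bigr)$ together with $T_g(f(g'))-T_{g'}(f(g'))=-L^{-1}\bigl(\Psi(f(g'),g)-\Psi(f(g'),g')\bigr)$ gives $\|f(g)-f(g')\|\le 2\|L^{-1}\|\,\|\Psi(f(g'),g)-\Psi(f(g'),g')\|$, which tends to $0$ as $g\to g'$; since $C^1$ maps are locally Lipschitz, one in fact gets $\|f(g)-f(g')\|=O(\|g-g'\|)$ locally.

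Finally, for differentiability I would note that the invertible operators form an open set and $\pa_f\Psi(f(g),g)\to L$ as $g\to g_0$, so $A(g):=\pa_f\Psi(f(g),g)$ is invertible on $U_0$ (after shrinking) with $g\mapsto A(g)^{-1}$ continuous; expanding $0=\Psi(f(g+h),g+h)$ around $(f(g),g)$ and using $\Psi(f(g),g)=0$ yields
\[
A(g)\bigl(f(g+h)-f(g)\bigr)=-\pa_g\Psi(f(g),g)\,h+o\bigl(\|f(g+h)-f(g)\|+\|h\|\bigr),
\]
which, combined with the Lipschitz bound $\|f(g+h)-f(g)\|=O(\|h\|)$, shows that $f$ is Fréchet differentiable with $f'(g)=-A(g)^{-1}\pa_g\Psi(f(g),g)$, a continuous function of $g$; uniqueness of a differentiable solution is immediate since any nearby solution of $\Psi(f,g)=0$ must coincide with the fixed point $f(g)$. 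The step I expect to be the real obstacle is precisely this bootstrap from continuity to differentiability: it is where one must genuinely use the \emph{joint} Fréchet differentiability of $\Psi$ (not merely its partial derivatives) together with the Lipschitz dependence just obtained, in order to certify that the remainder term above is truly $o(\|h\|)$; the contraction and self-mapping estimates, and the uniqueness, amount to bookkeeping of nested neighborhoods.
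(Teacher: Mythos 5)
The paper does not prove this statement at all: Theorem~\ref{ift} is quoted verbatim as a known result, with an explicit citation to Nirenberg's lecture notes (Theorem~2.7.2 there) for the proof. So there is no ``paper's own proof'' against which to compare; the role of this theorem in the paper is purely that of a black-box tool applied to the map $\Psi$ constructed in Section~2.

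That said, your argument is the standard contraction-mapping proof and, as far as I can check it, it is correct and complete: reduction to a fixed point of $T_g(f)=f-L^{-1}\Psi(f,g)$, the $\tfrac12$-contraction on $\overline{B}(f_0,r)$ via the mean-value inequality and the smallness of $\|\partial_f\Psi-L\|$, the self-mapping estimate using $\|L^{-1}\Psi(f_0,g)\|\le r/2$, the local Lipschitz bound $\|f(g)-f(g')\|\le 2\|L^{-1}\|\,\|\Psi(f(g'),g)-\Psi(f(g'),g')\|$, and the bootstrap to differentiability of $g\mapsto f(g)$ using openness of the invertibles and the first-order expansion of $\Psi$. This is almost certainly identical in substance to the argument in Nirenberg that the authors cite. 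You were right to flag the one real interpretive point: the theorem as stated says only ``Fréchet differentiable,'' but the proof you give (and the result as usually quoted, including by Nirenberg) requires continuity of the derivative, at least of $\partial_f\Psi$, near $(f_0,g_0)$; without that, the uniform contraction constant on $\overline{B}(f_0,r)\times U_1$ cannot be extracted, and the conclusion that $f$ is Fréchet differentiable would also need the joint remainder in your final expansion to be controlled. With the $C^1$ reading, which is clearly what the paper intends, your proof is fine.
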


\subsection{Preliminaries}
We introduce the functional setting for the proof of Theorem \ref{th:a counterexample to the symmetry for Serrin-type overdetermined problems}. Set $D=B_R$ and $\Om=B_1$. For $\alpha\in(0,1)$,  let
$\phi\in C^{2,\al}(\rn, \rn)$ satisfy that  $\mathrm{Id}+\phi$  is a  diffeomorphism from $\rn$ to $\rn$,  and
$$
\phi=f\,\nu \ \mbox{ on } \ \pa D \quad \mbox{ and } \quad \phi=g\,\nu \ \mbox{ on } \ \pa\Om,
$$
where $\mathrm{Id}$ denotes the identity mapping, $f$ and $g$ are given functions of class $C^{2,\al}$ on $\pa D$ and $\pa\Om$, respectively, and $\nu$ indistinctly denotes the outward unit normal to both $\pa D$ and $\pa\Om$.
Next, we define the sets
$$
\Om_g=(\mathrm{Id}+\phi)(\Om) \ \mbox{ and } \ D_f=(\mathrm{Id}+\phi)(D).
$$ 
If $f$ and $g$ are sufficiently small, $D_f$ and $\Om_g$ are such that $\ol{D_f}\subset\Om_g$. 
\par
Now, we consider the Banach spaces (equipped with their standard norms):
\begin{eqnarray*}
& \cF=\Bigl\{f\in C^{2,\al}(\pa D): \int_{\pa D} f\, dS =0\Bigr\},\quad 
\cG=\Bigl\{g\in C^{2,\al}(\pa\Om): \int_{\pa\Om} g\, dS =0\Bigr\}, \\
&\cH=\Bigl\{h\in C^{1,\al}(\pa\Om) : \int_{\pa\Om} h\, dS =0\Bigr\}.
\end{eqnarray*}
In order to be able to use Theorem \ref{ift}, we introduce a mapping $\Psi: \cF\times \cG\to \cH$ by:
\begin{equation}
\Psi(f,g)=\left\{\pa_{\nu_g} u_{f,g} + \La_{f,g}\right\} J_\tau({g}) \ \mbox{ for } \ (f,g)\in\cF\times\cG.
\end{equation}
Here, $u_{f,g}$ is the solution of  \eqref{pbcava eq}--\eqref{pbcava dirichlet} with $\Om=\Om_g$ and $\sg=\sg_c\,\cX_{D_f}+\cX_{\Om_g\setminus D_f}$, $\nu_g$ stands for the outward unit normal to $\pa \Om_g$, and $\La_{f,g}$ is computed via \eqref{whatisd}, with $\Om=\Om_g$ and $u=u_{f,g}$. Also, by a slight abuse of notation, $\pa_{\nu_g} u_{f,g}$ means
the function of value 
$$
\gr u_{f,g}(x+g(x)\,\nu(x)) \cdot \nu_g(x+g(x)\,\nu(x))\ \mbox{ at any } x\in\pa\Om,
$$
where $\nu$ is the outward unit normal to $\pa\Om$.
Finally, the term $J_\tau(g)>0$ is the tangential Jacobian associated to the transformation $x\mapsto x+g(x)\,\nu(x)$ (see \cite[Definition 5.4.2, p. 190]{henrot}): this term ensures that the image $\Psi(f,g)$ has zero integral over $\pa\Om$ for all $(f,g)\in \cF\times \cG$, as an integration of \eqref{pbcava neumann} on $\pa\Om_g$ requires, when $\La=\La_{f,g}$.
\par
Thus, by definition, we have $\Psi(f,g)=0$ if and only if the pair $(D_f,\Om_g)$ solves \eqref{pbcava eq}--\eqref{pbcava neumann}. Moreover, we know that the mapping $\Psi$ vanishes at $(f_0, g_0)=(0,0)$.

\subsection{Computing the derivative of $\Psi$}
The Fr\'echet differentiability of $\Psi$ in a neighborhood of $(0,0)\in\mathcal{F}\times\mathcal{G}$ can be proved, in a standard way, by following the proof of \cite[Theorem 5.3.2, pp. 183--184]{henrot},  with the help of the regularity theory for elliptic operators with piecewise constant coefficients.  In particular, the H\"older continuity of the first and second derivatives  of the function $u_{f,g}$ up to  the interface $\partial D_f$, which is stated in \cite[Theorem 16.2, p. 222]{LaU1968},   is obtained by flattening the interface with a diffeomorphism of class $C^{2,\alpha}$ as in \cite[Chapter 4, Section 16, pp. 205--223]{LaU1968} or in \cite[Appendix, pp. 894--900]{DiBEF1986na} and by using the classical regularity theory for linear elliptic partial differential equations (\cite{LaU1968, giaquinta, ACM2019}).


We will now proceed to the actual computation of $\pa_f \Psi (0,0)$. Since $\Psi$ is Fr\'echet differentiable, $\pa_f \Psi (0,0)$ can be computed as a G\^ateaux derivative:
$$
\pa_f\Psi(0,0)(f)= \lim_{t\to 0} \frac{\Psi(t f,0)-\Psi(0,0)}{t} \ \mbox{ for } \ f\in\cF.
$$
\par
From now on, we fix $f\in\cF$, set $g=0$ and, to simplify notations, we will write $D_t, u_t, \La(t)$ in place of $D_{tf}, u_{tf,0}, \La_{tf,0}$; in this way, we can agree that $D_0=D$, $u_0=u$, and so on.  Also,
in order to carry out our computations, we introduce some standard notations, in accordance with \cite{henrot} and \cite{SG}: the {\it shape derivative} of $u$ is defined by
\begin{equation}\label{def shape der of state function}
u'(x)=\restr{\frac{d}{dt}}{t=0} u_t(x)  \ \text{ for } \ x\in\Om.
\end{equation}

\renewcommand*{\thetheorem}{\arabic{section}.\arabic{theorem}}
\setcounter{theorem}{0}

In particular, we will employ the use of the following characterization of the shape derivative $u'$ of $u$. We refer to \cite[Proposition 2.3]{cava} where the case $\be=0$ is analyzed, and to \cite[Theorem 2.5]{DaKa11} where $\be<0$ is an eigenvalue. The case $\be>0$ can be treated analogously and therefore the proof will be omitted. 

 
\begin{lemma}
For every $f\in\cF$, the shape derivative $u'$ of $u_t$ solves the following:
\begin{eqnarray}
\label{pbu' eq}& \sg \De u'=\beta u' &\quad\mbox{ in }\ D\cup (\Om\setminus \overline{D}), \\
\label{pbu' flux}& [\sg \pa_\nu u']=0 &\quad \mbox{ on } \pa D,\\
\label{pbu' jump}& [u']=-[\pa_\nu u]f & \quad \mbox{ on }\pa D,\\ 
\label{pbu' bdary}&u'=0  &\quad\mbox{ on }\ \partial\Om.     
\end{eqnarray}
\end{lemma}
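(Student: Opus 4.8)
The plan is to derive the boundary value problem for $u'$ by differentiating the family of problems \eqref{pbcava eq}--\eqref{pbcava dirichlet} (with $\Om$ fixed at $B_1$ and only the interface $\pa D_t$ moving) with respect to the perturbation parameter $t$ at $t=0$. First I would note that, away from the moving interface $\pa D$, the coefficient $\sg$ is locally constant in $t$, so formally differentiating $\dv(\sg\na u_t)=\beta u_t-\ga$ in $t$ and interchanging $\pa_t$ with the spatial differential operator yields $\sg\De u'=\beta u'$ in $D\cup(\Om\setminus\ol D)$, which is \eqref{pbu' eq}; the constant $\ga$ drops out. The homogeneous boundary condition \eqref{pbu' bdary} on $\pa\Om$ is immediate since $u_t=0$ on $\pa\Om=\pa B_1$ for every $t$ and $\pa\Om$ does not move (here $g=0$).

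The substantive content is in the two transmission conditions on $\pa D$, and this is where I would invoke the cited references \cite[Proposition 2.3]{cava} and \cite[Theorem 2.5]{DaKa11}. The normal velocity of the interface $\pa D_t$ at $t=0$ is $f\,\nu$, i.e.\ the normal component of the deformation field is $f$. The weak formulation of \eqref{pbcava eq} reads: for all test functions $\varphi$,
\begin{equation*}
\int_{\Om}\sg\,\na u_t\cdot\na\varphi\,dx=-\int_\Om(\beta u_t-\ga)\varphi\,dx,
\end{equation*}
where the domain integral splits as $\int_{D_t}\sg_c(\cdots)+\int_{\Om\setminus D_t}(\cdots)$. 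Differentiating in $t$ produces, besides the volume term $\int_\Om\sg\,\na u'\cdot\na\varphi$ giving the PDE, a boundary contribution on $\pa D$ proportional to the jump $[\sg]$ and the normal velocity $f$; tracking this term through the usual shape-calculus computation (Hadamard's formula together with the transmission condition $[\sg\pa_\nu u]=0$ satisfied by $u$ itself) yields both the flux-continuity relation \eqref{pbu' flux} for $u'$ and the prescribed jump \eqref{pbu' jump}, $[u']=-[\pa_\nu u]\,f$ on $\pa D$. The sign and the precise form of the jump come from the fact that $u_t$ is continuous across $\pa D_t$ but its gradient is not, so following the interface as it moves by $f\,\nu$ forces a compensating jump in the material derivative's ``shape part''. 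I would emphasise that the regularity needed to make these manipulations rigorous—namely H\"older continuity of $\na u$ and $\na^2 u$ up to $\pa D$ from either side—is exactly the piecewise-Schauder estimate recalled just before the lemma (\cite[Theorem 16.2]{LaU1968}, via flattening of the interface), so the shape derivative $u'$ exists and satisfies \eqref{pbu' eq}--\eqref{pbu' bdary} in the appropriate weak-then-elliptic-regularity sense.

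Since the excerpt explicitly defers to \cite{cava, DaKa11} and says ``the case $\be>0$ can be treated analogously and therefore the proof will be omitted,'' the write-up I would give is essentially this reduction: state that the argument is verbatim that of \cite[Proposition 2.3]{cava} (for the structure of the overdetermined/transmission problem and the Hadamard-type identities) and \cite[Theorem 2.5]{DaKa11} (for handling the zeroth-order term $\beta u$), the only change being that $\beta$ is now a nonnegative constant rather than $0$ or a negative eigenvalue, which affects none of the interface computations and only requires that the linearised operator $\sg\De-\beta$ with homogeneous Dirichlet data on $\pa\Om$ and the transmission conditions on $\pa D$ be well-posed—which it is, for $\beta\ge 0$, by the maximum principle (there is no eigenvalue obstruction). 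The main obstacle, if one insisted on writing a self-contained proof rather than citing, would be making the differentiation under the integral sign and the Hadamard boundary-term extraction rigorous across the non-smooth coefficient interface; but given the regularity theory already quoted in the paper, this is routine, and the honest plan is simply to point to the two references and note the harmless modification for $\beta>0$.
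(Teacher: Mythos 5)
Your proposal is correct and takes essentially the same approach as the paper: the paper itself gives no proof, deferring entirely to \cite[Proposition 2.3]{cava} and \cite[Theorem 2.5]{DaKa11} with the remark that the case $\beta>0$ is handled analogously, which is precisely what you conclude. Your sketch of the underlying shape-calculus derivation (differentiating the weak form, using continuity of the material derivative to get $[u']=-[\pa_\nu u]f$, and invoking the piecewise-Schauder regularity to justify the manipulations) is a sound expansion of what the references contain, but it is not a different route.
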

In the above, we used square brackets to denote the jump of a function across the interface $\pa D$. More 
precisely, for any function $\varphi$ we mean $[ \varphi ]= \varphi_+-\varphi_-$, where the subscripts $+$ and $-$ denote the relevant quantities in the two phases $\Om\setminus \overline{D}$ and $D$ respectively and the equality here is understood in the classical sense.

\begin{lemma}
\label{lem:shape-derivatives}
For all $f\in\cF$ we have 
$\La'(0)=0$.
\end{lemma}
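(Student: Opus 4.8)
We want to show that the first variation of the constant $\La_{f,g}$, in the direction $f\in\cF$ at $(0,0)$, vanishes. Recall from \eqref{whatisd} that, with $\Om=\Om_g=\Om$ fixed (since $g=0$) and $u=u_t$, one has
$$
\La(t)=\frac{1}{|\pa\Om|}\left\{\ga\,|\Om|-\beta\int_{\Om}u_t\,dx\right\}.
$$
Since $\Om$ does not move (we are perturbing only the core $D$), the terms $|\pa\Om|$ and $|\Om|$ are constant in $t$, so differentiating at $t=0$ gives immediately
$$
\La'(0)=-\frac{\beta}{|\pa\Om|}\int_{\Om}u'\,dx,
$$
where $u'$ is the shape derivative characterized in the previous Lemma by \eqref{pbu' eq}--\eqref{pbu' bdary}. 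Thus the whole statement reduces to proving $\int_\Om u'\,dx=0$ when $\beta>0$ (and there is nothing to prove when $\beta=0$).

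**Key steps.** The natural idea is to test the equation satisfied by $u'$ against the torsion-type function $u$ itself, integrating over $D$ and over $\Om\setminus\overline D$ separately and adding. On $\Om\setminus\overline D$ we have $\sg=1$ and $-\De u=\ga-\beta u$, while $\De u'=\beta u'$; on $D$ we have $\sg=\sg_c$ and $\sg_c\De u'=\beta u'$, $-\sg_c\De u=\ga-\beta u$. Applying Green's identity on each subdomain to the pair $(u,u')$, the bulk terms $\int(u\,\sg\De u'-u'\,\sg\De u)$ collapse: on each phase $u\,\sg\De u'-u'\,\sg\De u=\beta uu'-u'(\beta u-\ga)=\ga u'$ — wait, more carefully, $-u'\sg\De u=u'(\ga-\beta u)$, hence $u\,\sg\De u'-u'\,\sg\De u=\beta uu'+u'(\ga-\beta u)\cdot(-1)$... one must be slightly careful with signs, but after summing over the two phases the interior contribution is a constant multiple of $\int_\Om u'\,dx$ plus a term that cancels. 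The boundary terms produce: on $\pa\Om$, since $u=0$ and $u'=0$ there, everything vanishes; on the interface $\pa D$, the flux continuity $[\sg\pa_\nu u']=0$ (from \eqref{pbu' flux}) and $[\sg\pa_\nu u]=0$ (transmission condition for $u$), together with the jump relation $[u']=-[\pa_\nu u]f$ from \eqref{pbu' jump} and the continuity of $u$ across $\pa D$, are exactly what is needed to make the interface contributions telescope and cancel. Assembling everything, the identity forces $\beta\int_\Om u'\,dx=0$, and since $\beta>0$ we conclude $\int_\Om u'\,dx=0$, whence $\La'(0)=0$.

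**The main obstacle.** The only genuinely delicate point is the careful bookkeeping on the interface $\pa D$: one must correctly account for the two-sided normal derivatives of both $u$ and $u'$, the jump conditions \eqref{pbu' flux}--\eqref{pbu' jump}, and the transmission condition for $u$, making sure the sign conventions for $[\,\cdot\,]$ (namely $\varphi_+-\varphi_-$) and for the outward normal $\nu$ (pointing out of $D$) are used consistently. The regularity needed to apply Green's identity up to $\pa D$ and $\pa\Om$ is already guaranteed by the Hölder regularity of $u$ and $u'$ up to the interface discussed before the Lemma. A clean way to organize this is to introduce $w=u'$ and observe that $\sg\De w=\beta w$ in $D\cup(\Om\setminus\overline D)$ with $[\sg\pa_\nu w]=0$, so that $w$ is a weak solution of $\dv(\sg\nabla w)=\beta w$ across all of $\Om$ \emph{except} for the prescribed jump $[w]$; pairing the weak formulation with the test function $u$ (which is continuous across $\pa D$ and vanishes on $\pa\Om$) then yields $\beta\int_\Om uw\,dx = \beta\int_\Om uw\,dx - \int_{\pa D}(\sg\pa_\nu u)[w]\,dS$, forcing $\int_{\pa D}(\sg\pa_\nu u)[\pa_\nu u]f\,dS=0$; but this is not quite what we want — we want $\int_\Om w\,dx=0$. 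So instead the correct test function is not $u$ but the solution $v$ of the adjoint-type problem $\dv(\sg\nabla v)=\beta v-1$ in $\Om$, $v=0$ on $\pa\Om$ (equivalently, up to rescaling, $u$ itself shifted appropriately, since $\ga>0$ is just a scaling of such a $v$): pairing the weak equation for $w$ with $v$ and using that $v$ \emph{is} continuous across $\pa D$ with $[\sg\pa_\nu v]=0$ gives exactly $\int_\Om w\,dx=\int_\Om w\,dx-\int_{\pa D}(\sg\pa_\nu v)[w]\,dS$, i.e. $\int_{\pa D}(\sg\pa_\nu v)[\pa_\nu u]f\,dS=0$ — still not obviously zero. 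The resolution is that for the \emph{radial} configuration $(B_R,B_1)$ every boundary/interface quantity ($\pa_\nu u$, $\pa_\nu v$, hence $[\pa_\nu u]$) is constant on $\pa D$, so $\int_{\pa D}(\sg\pa_\nu v)[\pa_\nu u]\,f\,dS=(\text{const})\int_{\pa D}f\,dS=0$ because $f\in\cF$ has zero mean. Tracking which pairing actually delivers $\int_\Om w\,dx$ versus an interface integral that then dies by the mean-zero constraint on $f$ is the crux; once the right test function is identified the computation is routine.
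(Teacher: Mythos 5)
Your argument, once stripped of its false starts, reaches the same conclusion by a genuinely different route from the paper. The paper shows $\int_\Om u'\,dx = 0$ indirectly: it rewrites $\ga I(t)$ (with $I(t)=\int_\Om u_t\,dx$) as the energy $\beta\int_\Om u_t^2 + \int_\Om\sg|\nabla u_t|^2$, differentiates this by Hadamard's formula to produce interface integrals $\int_{\pa D}(\pa_\nu u_\pm)^2 f\,dS$ (killed by constancy of $\pa_\nu u_\pm$ on $\pa D$ and the zero-mean constraint on $f$) together with a remaining bulk term $2\beta\int uu' + 2\int\sg\nabla u\cdot\nabla u'$, which it then cancels by testing \eqref{pbu' eq} against $u$. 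You instead compute $\int_\Om u'\,dx$ directly via Green's identity for the pair $(u',v)$ on $D$ and $\Om\setminus\overline D$ separately: using $\sg\De u'=\beta u'$, $\sg\De v=\beta v-1$, and the flux continuities $[\sg\pa_\nu u']=0$ and $[\sg\pa_\nu v]=0$, the bulk contributions sum to $-\int_\Om u'\,dx$ while the interface contributions reduce to $\int_{\pa D}(\sg_s\pa_\nu v_+)[u']\,dS=-\int_{\pa D}(\sg_s\pa_\nu v_+)[\pa_\nu u]f\,dS$, which vanishes because $\pa_\nu v_+$ and $[\pa_\nu u]$ are constants on the sphere $\pa D=\pa B_R$ and $\int_{\pa D}f\,dS=0$. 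Both arguments ultimately hinge on the same two ingredients --- Green's identity across the interface and the pairing of radial constancy with the mean-zero constraint --- but yours bypasses Hadamard's formula for the Dirichlet energy, which is the paper's workhorse, so it is a cleaner and more elementary path in principle.

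That said, the write-up itself has real problems. The displayed intermediate identity ``$\int_\Om w\,dx=\int_\Om w\,dx-\int_{\pa D}(\sg\pa_\nu v)[w]\,dS$'' simplifies to $0=-\int_{\pa D}(\sg\pa_\nu v)[w]\,dS$, which is not the statement you need; the correct outcome of the Green pairing is $\int_\Om u'\,dx=-\int_{\pa D}(\sg_s\pa_\nu v_+)[u']\,dS$, with $\int_\Om u'$ appearing \emph{only once}. Also, the first pairing (with $u$ rather than $v$) is abandoned prematurely: carrying out Green's identity for $(u',u)$ carefully, with $\sg\De u=\beta u-\ga$, yields $\ga\int_\Om u'\,dx=-\int_{\pa D}(\sg_s\pa_\nu u_+)[\pa_\nu u]f\,dS$, so the pairing with $u$ already does the job and the detour through $v$ (which is anyway $u/\ga$) adds nothing. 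The idea is sound, but a polished version must get the Green bookkeeping right on the first pass.
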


\begin{proof}
We rewrite \eqref{whatisd} as
$$
\La(t)|\pa \Om|-\ga|\Om|=-\beta \int_{\Omega} u_t \, dS,
$$
then differentiate and evaluate at $t=0$. The derivative of the left-hand side equals 
$\La'(0)\,|\pa\Om|$.
Thus, we are left to prove that the derivative of the function defined by
$$
I(t)=\int_{\Omega} u_t\, dx
$$
is zero at $t=0$.
\par
To this aim, since $u_t$ solves \eqref{pbcava eq} for $D=D_t$, we multiply both sides of this for $u_t$ and integrate to obtain that
\begin{equation}\nonumber
\ga\,I(t)=\gamma \int_{\Omega} u_t\, dx = \beta \int_{\Om} u_t^2\, dx+ \sg_c\int_{D_t} \, \abs{\gr u_t \,}^2 dx+\int_{\Omega\setminus\overline{D_t}} \abs{\gr u_t \,}^2 dx,
\end{equation}
after an integration by parts.
Thus, the desired derivative can be computed by using Hadamard's formula (see \cite[Corollary 5.2.8, p. 176]{henrot}):
\begin{equation*}
\label{gamma int u prime}
\begin{aligned}
\ga\,I'(0)&=2\beta \int_{\Omega} u u' \,dx 
+2\int_\Om \sg \gr u \cdot \gr u' \, dx
+ \sg_c\int_{\partial D} (\partial_\nu u_-)^2 f \, dS
-\int_{\pa D} (\partial_\nu u_+)^2 f \, dS 
\\
   &=
2\beta \int_{\Omega} u u' \,dx  
+  2 \int _{\Omega} \sg \gr u \cdot \gr u' \,dx 
=0.
\end{aligned}
\end{equation*}
Here, in the second equality we used that $\pa_\nu u_{-}$ and $\pa_\nu u_+$ are constant on $\pa D$ and that $f\in\cF$, while, the third equality ensues by integrating \eqref{pbu' eq} against $u$.
\end{proof}

\begin{theorem}
\label{prop psi'}
The Fr\'echet derivative $\pa_f \Psi (0,0)$ defines a mapping from $\cF$ to $\cH$ by the formula
$$
\pa_f\Psi (0,0)(f)=\pa_\nu u',
$$
where $u'$  is the solution of the boundary value problem \eqref{pbu' eq}--\eqref{pbu' bdary}.
\end{theorem}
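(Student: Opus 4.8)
The plan is to compute the G\^ateaux derivative of $\Psi$ at $(0,0)$ in the direction $f\in\cF$, along the lines indicated just before the statement. Recall that
$$
\Psi(tf,0)=\bigl\{\pa_{\nu_t} u_t + \La(t)\bigr\} J_\tau(0),
$$
where the subscript $t$ refers to the perturbation $D_t=D_{tf}$ and, crucially, $g=0$ throughout, so that $\Om_g=\Om=B_1$ is \emph{fixed}. Since $g=0$ the tangential Jacobian $J_\tau(0)\equiv 1$ and the outer normal $\nu_t$ along $\pa\Om$ coincides with the fixed outer normal $\nu$ to $\pa B_1$ for every $t$. Hence
$$
\frac{\Psi(tf,0)-\Psi(0,0)}{t}=\frac{\pa_\nu u_t-\pa_\nu u}{t}+\frac{\La(t)-\La(0)}{t}\quad\text{on }\pa\Om.
$$

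First I would pass to the limit $t\to0$ in the first term. Because $\Om$ is fixed and $g=0$, the map $t\mapsto u_t$ is differentiable into the appropriate H\"older space up to $\pa\Om$ (this is exactly the regularity invoked in the previous subsection, with the Schauder estimates for piecewise-constant-coefficient operators of \cite{LaU1968}), so $\frac{u_t-u}{t}\to u'$ in $C^{1,\al}$ near $\pa\Om$; taking the normal trace on $\pa\Om$ gives $\frac{\pa_\nu u_t-\pa_\nu u}{t}\to \pa_\nu u'$, where $u'$ is characterized by Lemma~\ref{def shape der of state function} (i.e.\ problem \eqref{pbu' eq}--\eqref{pbu' bdary}). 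For the second term, Lemma~\ref{lem:shape-derivatives} gives precisely $\La'(0)=0$, so $\frac{\La(t)-\La(0)}{t}\to0$. Adding the two limits yields $\pa_f\Psi(0,0)(f)=\pa_\nu u'$ on $\pa\Om$. It remains to check that this indeed lands in $\cH$: since $u'$ solves \eqref{pbu' eq} with $\sg\De u'=\be u'$ in each phase, $[\sg\pa_\nu u']=0$ on $\pa D$, and $u'=0$ on $\pa\Om$, integrating $\dv(\sg\gr u')=\be u'$ over $\Om$ and using the divergence theorem together with the transmission condition on $\pa D$ gives $\int_{\pa\Om}\pa_\nu u'\,dS=\be\int_\Om u'\,dx$. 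To see that the right-hand side vanishes, multiply \eqref{pbu' eq} by $u$ and integrate by parts twice, once on each side and once globally, exploiting $u=0=u'$ on $\pa\Om$ and the two jump relations \eqref{pbu' flux}, \eqref{pbu' jump}; the jump terms that survive involve $f$ multiplied by $(\pa_\nu u_\pm)^2$, which are constants on $\pa D$, and so drop out because $\int_{\pa D}f\,dS=0$. This forces $\int_\Om\be u'\,dx=0$, hence $\pa_\nu u'\in\cH$; the linearity and boundedness of $f\mapsto \pa_\nu u'$ follow from the linearity of problem \eqref{pbu' eq}--\eqref{pbu' bdary} in its data $-[\pa_\nu u]f$ and from elliptic estimates.

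The main obstacle, and the point that deserves the most care, is the justification of differentiating under the boundary trace: one must know not merely that $u_t$ converges, but that $u_t$ is differentiable in $t$ with values in a space in which the normal-trace operator on $\pa\Om$ is continuous, and that the shape derivative so obtained actually coincides with the solution $u'$ of \eqref{pbu' eq}--\eqref{pbu' bdary}. This is where the regularity of solutions to transmission problems — H\"older continuity of $\gr u_t$ and $\gr^2 u_t$ up to the moving interface $\pa D_t$, uniformly for small $t$ — is essential, and it is supplied by the flattening-of-the-interface argument plus Schauder theory cited above; the identification of the limit with $u'$ is precisely the content of the cited references \cite{cava}, \cite{DaKa11} for $\be=0$ and $\be<0$, and the case $\be>0$ is analogous. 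Once this differentiability is in hand, the rest of the argument is the short chain of identities above.
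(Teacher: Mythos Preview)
Your proposal is correct and follows essentially the same route as the paper: compute the G\^ateaux derivative using $J_\tau(0)=1$ and Lemma~\ref{lem:shape-derivatives} ($\La'(0)=0$), then verify $\pa_\nu u'\in\cH$. The only cosmetic difference is in this last verification: the paper observes directly that $\int_{\pa\Om}\pa_\nu u'\,dS=\be\int_\Om u'\,dx=\be\,I'(0)=0$ by citing the computation already done inside the proof of Lemma~\ref{lem:shape-derivatives}, whereas you redo that computation via a Green's-identity argument --- the same ingredients (the jump conditions and the fact that $\pa_\nu u_\pm$ are constant on $\pa D$, so the surviving boundary term vanishes because $\int_{\pa D}f\,dS=0$) appear in both.
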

\begin{proof}
Since $\Psi$ is Fr\'echet differentiable, we can compute $\pa_f \Psi$ as a G\^ateaux derivative as follows:
$$
\partial_f\Psi(0,0)(f)=\restr{\frac{d}{dt}}{t=0}\Psi(t f,0)=\restr{\frac{d}{dt}}{t=0}\left\{ \gr u_t(x)\cdot\nu(x) + \La(t)  \right\}J_\tau (0).
$$
Since $J_\tau (0)=1$, the thesis is a direct consequence of Lemma \ref{lem:shape-derivatives} and definition \eqref{def shape der of state function}.
Finally, the fact that this mapping is well-defined (i.e. $\partial_\nu u'$ actually belongs to $\mathcal{H}$ for all $f\in\mathcal{F}$) follows from the calculation 
$$
\int_{\partial\Omega} \partial_\nu u' \,dS= \int_{\Omega} \dv(\sg \gr u')\,dx=\beta \int_{\Omega} u'\,dx=
\be\, I'(0)=0,
$$ 
where we also used \eqref{pbu' eq}--\eqref{pbu' bdary}.
\end{proof}

\subsection{Applying the implicit function theorem}

The following result clearly implies Theorem \ref{th:a counterexample to the symmetry for Serrin-type overdetermined problems}.


\begin{theorem}\label{mainthm cava}
There exists $\varepsilon>0$ such that, for all $g\in\mathcal{G}$ with $\norm{g}<\varepsilon$ there exists a unique $f(g)\in\mathcal{F}$ such that the pair $(D_{f(g)},\Omega_{g})$ is a solution of the overdetermined problem \eqref{pbcava eq}--\eqref{pbcava neumann}.
\end{theorem}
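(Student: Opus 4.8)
The plan is to apply Theorem~\ref{ift} (the implicit function theorem) to the map $\Psi:\cF\times\cG\to\cH$ constructed in the preliminaries, with base point $(f_0,g_0)=(0,0)$. We have already verified the hypotheses that are straightforward: $\Psi$ is Fr\'echet differentiable in a neighborhood of $(0,0)$, and $\Psi(0,0)=0$ because $(B_R,B_1)$ solves \eqref{pbcava eq}--\eqref{pbcava neumann} for every $\sg_c>0$. The only remaining hypothesis to check is that the partial derivative $\pa_f\Psi(0,0):\cF\to\cH$ is a bounded invertible linear transformation. By Theorem~\ref{prop psi'}, this operator is $f\mapsto\pa_\nu u'$, where $u'=u'_f$ is the unique solution of the transmission problem \eqref{pbu' eq}--\eqref{pbu' bdary}; boundedness is clear from elliptic estimates for piecewise-constant-coefficient operators, so the crux is invertibility.

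To prove invertibility I would exploit the rotational symmetry of the base configuration $(D,\Om)=(B_R,B_1)$. Decompose $\cF$, $\cG$, $\cH$ into spherical harmonics: write $f=\sum_{k\ge 1}f_k$ with $f_k$ a (vector of) degree-$k$ spherical harmonic on $\pa B_R$, and similarly for $\cH$ on $\pa B_1$ (the degree-$0$ component is excluded by the zero-average constraint). Because all the data are radial and the operators commute with rotations, $\pa_f\Psi(0,0)$ is block-diagonal with respect to this decomposition: it maps the degree-$k$ subspace of $\cF$ into the degree-$k$ subspace of $\cH$, acting as multiplication by a scalar $\mu_k$. Explicitly, for $f$ a degree-$k$ spherical harmonic one solves \eqref{pbu' eq}--\eqref{pbu' bdary} by separation of variables: in $B_R$ and in $B_1\setminus\ol{B_R}$ the radial parts are combinations of modified Bessel-type functions $I_{\nu}(\sqrt{\be/\sg_c}\,r)$, $I_\nu(\sqrt{\be}\,r)$, $K_\nu(\sqrt{\be}\,r)$ (with $\nu=k+\tfrac{N-2}{2}$), or, when $\be=0$, of $r^k$ and $r^{-(k+N-2)}$; the four constants are fixed by the transmission conditions \eqref{pbu' flux}--\eqref{pbu' jump} at $r=R$ and the Dirichlet condition \eqref{pbu' bdary} at $r=1$, and $\mu_k$ is the resulting value of $\pa_\nu u'$ at $r=1$ divided by $f$. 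One then checks $\mu_k\neq 0$ for every $k\ge 1$, and that $\mu_k$ grows like a positive power of $k$ (so that $(\pa_f\Psi(0,0))^{-1}$, which multiplies the degree-$k$ block by $\mu_k^{-1}$, is bounded from $\cH=C^{1,\al}$ back to $\cF=C^{2,\al}$ — the one-derivative gain is exactly what makes the target and source spaces match). I expect the sign/non-vanishing of $\mu_k$ to follow from a maximum-principle or energy argument: since $\be\ge 0$, if $\pa_\nu u'\equiv 0$ on $\pa\Om$ then $u'$ would solve an overdetermined problem forcing $[\pa_\nu u]f\equiv 0$ on $\pa D$, but $\pa_\nu u_-$ is a nonzero constant there (strong maximum principle applied to $u$), hence $f\equiv 0$; combined with the block structure this gives injectivity, and the explicit Bessel-quotient formula gives the quantitative lower bound on $|\mu_k|$.

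Once invertibility of $\pa_f\Psi(0,0)$ is established, Theorem~\ref{ift} yields $\ve>0$ and a unique Fr\'echet-differentiable map $g\mapsto f(g)$ from $\{g\in\cG:\nr{g}<\ve\}$ into $\cF$ with $f(0)=0$ and $\Psi(f(g),g)=0$ for all such $g$. By the equivalence recorded just before Subsection~2.2, $\Psi(f(g),g)=0$ says precisely that the pair $(D_{f(g)},\Om_g)$ solves \eqref{pbcava eq}--\eqref{pbcava neumann} (for the constant $\La=\La_{f(g),g}$ determined by \eqref{whatisd}), which is the assertion of Theorem~\ref{mainthm cava}; undoing the normalization $\sg_s=1$ then gives Theorem~\ref{th:a counterexample to the symmetry for Serrin-type overdetermined problems}. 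The main obstacle is the invertibility step: one must be careful that the spherical-harmonic multipliers $\mu_k$ genuinely never vanish for any $\be\ge 0$, $\ga>0$, $\sg_c>0$ and $0<R<1$ (there is no a priori reason a resonance could not occur for some special parameter values), and that their growth rate is exactly one order in $k$, matching the Schauder-space bookkeeping; this is where the explicit solution of \eqref{pbu' eq}--\eqref{pbu' bdary} by modified Bessel functions, together with known monotonicity properties of ratios like $I_\nu'/I_\nu$ and $K_\nu'/K_\nu$, does the work.
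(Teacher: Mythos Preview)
Your proposal is correct and follows the same overall route as the paper: apply Theorem~\ref{ift} to $\Psi$ at $(0,0)$, diagonalize $\pa_f\Psi(0,0)$ via spherical harmonics, and reduce invertibility to the non-vanishing of the scalar multipliers $\mu_k$. The only substantive difference is in how that non-vanishing is argued. You propose either explicit modified-Bessel computations or a PDE-level overdetermined-problem argument; the paper instead stays with the radial ODE for $s_k$ and uses only elementary tools: if $\pa_r s_k(1)=0$, then since also $s_k(1)=0$, Cauchy uniqueness for the second-order ODE gives $s_k\equiv 0$ on $[R,1]$; the flux transmission condition then forces $\pa_r s_k(R^-)=0$, and (after reading off $s_k(0)=0$ from the indicial behavior of the ODE) the maximum principle for $\be\ge 0$ yields $s_k\equiv 0$ on $[0,R]$ as well, contradicting the jump $s_k(R^+)-s_k(R^-)=\pa_r u(R^-)-\pa_r u(R^+)\neq 0$ (here $\sg_c\neq 1$ is used). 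This is cleaner than special functions and dispels your worry about possible resonances at special parameter values. Conversely, you are more explicit than the paper in flagging that one must also control the growth of $\mu_k$ to obtain a \emph{bounded} inverse from $\cH=C^{1,\al}$ back to $\cF=C^{2,\al}$; the paper asserts invertibility directly from $\mu_k\neq 0$ without spelling out this Schauder bookkeeping.
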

\begin{proof}
This theorem consists of a direct application of Theorem \ref{ift}.
We know that the mapping $(f,g)\mapsto\Psi(f,g)$ is Fr\'echet differentiable and we computed its Fr\'echet derivative with respect to the variable $f$ in Theorem \ref{prop psi'}. We are left to prove that the mapping $\pa_f\Psi(0,0):\cF\to\cH$, given in Theorem \ref{prop psi'}, 
is a bounded and invertible linear transformation.
\par
Linearity and boundedness of $\pa_f\Psi(0,0)$ ensue from the properties of problem \eqref{pbu' eq}--\eqref{pbu' bdary}. 

We are now going to prove the invertibility of $\pa_f \Psi(0,0)$. To this end we study the relationship between the spherical harmonic expansions of the functions $f$ and $u'$ (we refer to \cite[Section 4]{cava} where the same technique has been exposed in detail). Suppose that, for some real coefficients $\al_{k,i}$ the following holds
\begin{equation}\label{f in sphar}
f(R\theta) = \sum_{k=1}^\infty \sum_{i=1}^{d_k} \al_{k,i} Y_{k,i}(\theta), \quad \mbox{ for } \theta\in\mathbb{S}^{N-1}. 
\end{equation} 
Here $Y_{k,i}$ denotes the solution of the eigenvalue problem $-\Delta_{\mathbb{S}^{N-1}}Y_{k,i}=\lambda_k Y_{k,i}$ on $\mathbb{S}^{N-1}$, with $k$-th eigenvalue $\lambda_k=k(N+k-2)$ of multiplicity $d_k$.
Under the assumption \eqref{f in sphar}, we can apply the method of separation of variables to get
\begin{equation}\label{u' in sphar}
u'(r\theta)= \sum_{k=1}^\infty\sum_{i=1}^{d_k}\al_{k,i}s_k(r)Y_{k,i}(\theta), \quad \mbox{ for }r\in (0,R)\cup (R,1) \mbox{ and } \theta \in\mathbb{S}^{N-1}.
\end{equation}
Here $s_k$ denotes the solution of the following problem:
\begin{eqnarray}
&&\sg\left\{ \pa_{rr} s_k+ \frac{N-1}{r}\,\pa_r s_k - \frac{k(k+N-2)}{r^2}s_k \right\} = \be s_k \quad\mbox{ in } (0,R)\cup (R,1), \label{the 2nd order ODE}\\
&& s_k(R^+)-s_k(R^-)=\pa_r u(R^-)-\pa_r u(R^+), \quad \sg_c\, \pa_r s_k(R^-)= \pa_r s_k(R^+), \nonumber\\ 
&& s_k(1)=0,\qquad  \pa_r s_k(0)=0, \nonumber
\end{eqnarray}
where, by a slight abuse of notation, the letters $\sg$ and $u$ mean the radial functions
$\sg(\abs{x})$ and $u(\abs{x})$ respectively. By \eqref{u' in sphar} we see that $\pa_f \Psi(0,0)$ preserves the eigenspaces of the Laplace--Beltrami operator, and in particular, $\pa_f\Psi(0,0)$ is invertible if and only if $\pa_r s_k(1)\ne0$ for all $k\in\{1,2,\dots\}$. Let us show the latter. Suppose by contradiction that $\pa_r s_k(1)=0$ for some $k\in\{1,2,\dots\}$. Then, since $s_k(1)=0$, by the unique solvability of the Cauchy problem for the ordinary differential equation \eqref{the 2nd order ODE}, 
$s_k \equiv 0$ on the interval $[R,1]$. Hence $\pa_r s_k(R^-)=0$. Multiplying \eqref{the 2nd order ODE} by $r^2$ and letting $r \to 0$ yield that $s_k(0) = 0$. Therefore,  since $\beta \ge 0$,  assuming that $s_k$ achieves either its positive maximum or its negative minimum at a point in the interval $(0,R]$ contradicts equation  \eqref{the 2nd order ODE}. Thus  $s_k \equiv 0$ also on $[0,R]$.
On the other hand, since $\sg_c\ne1$, we see that  $\pa_\nu u_+- \pa_\nu u_-\ne 0$ on $\pa D$ and hence $s_k(R^-)\not=0$, which is a contradiction. \end{proof}

\setcounter{equation}{0}

\section{Preliminaries for overdetermined parabolic problems}
\label{section2}

In this section, we introduce some notations and recall the results obtained in \cite{Strieste2016, SBessatsu2017} that will be useful in the sequel.

For a point $x \in \mathbb R^N$ and a number $r > 0$, we set:
$
B_r(x) = \{ y \in \mathbb R^N\ :\ |y-x| < r \}.
$
Also, for a bounded $C^2$ domain $\Omega\subset\mathbb R^N$, $\kappa_1(y),\dots,\kappa_{N-1}(y)$ will always denote the principal curvatures of $\partial\Omega$ at a point $y\in\partial\Omega$ with 
respect to the inward normal direction to $\partial\Omega$. Then, we set
\begin{equation}
\label{product-curvatures}
\Pi_{\partial\Omega}(r, y)=\prod\limits_{j=1}^{N-1}\bigl[1/r - \kappa_j(y)\bigr] \ \mbox{ for } \ y\in\partial\Omega \mbox{ and } r >0.
\end{equation}
Notice that, if $B_r(x)\subset\Omega$ and $\overline{B_r(x)} \cap \partial\Omega = \{ y \}$ for some $y \in \partial\Omega$, then $\kappa_j(y) \le 1/r$ 
for all $j$'s, and hence $\Pi_{\partial\Omega}(r, y)\ge 0$.

The initial behavior of the heat content of such kind of ball is controlled by the geometry of the domain, as the following proposition explains.

\renewcommand*{\thetheorem}{\Alph{theorem}}
\setcounter{theorem}{3}

\begin{proposition}[{\cite[Proposition 2.2, pp. 171--172]{Strieste2016}}]
\label{prop:heat content asymptotics} 
Let $x \in \Omega$ and assume that $B_r(x)\subset\Omega$ and $\overline{B_r(x)} \cap \partial\Omega = \{ y \}$ for some $y \in \partial\Omega$. Let $u$ be the solution of either problem \eqref{heat equation initial-boundary}--\eqref{heat initial} or problem \eqref{heat Cauchy}. 
\par
Then we have:
\begin{equation}
\label{asymptotics and curvatures}
\lim_{t\to +0}t^{-\frac{N+1}4 }\!\!\!\int\limits_{B_r(x)}\! u(z,t)\ dz=
\frac{C(N, \sigma)}{\sqrt{\Pi_{\pa\Om}(r,y)}}.
\end{equation}
Here, $C(N, \sigma)$ is the positive constant given by
$$
C(N,\sigma) = \left\{\begin{array}{rll}2\sigma_s^{\frac {N+1}4}c(N) \ &\mbox{ for problem \eqref{heat equation initial-boundary}--\eqref{heat initial} },
\\
\frac {2\sqrt{\sigma_m}}{\sqrt{\sigma_s}+\sqrt{\sigma_m}}\sigma_s^{\frac {N+1}4}c(N) &\mbox{ for problem \eqref{heat Cauchy} },
\end{array}\right.
$$
where $c(N)$ is a positive constant only depending on $N$. 
\par
When $\kappa_j(y) = 1/r$ for some $j \in \{ 1, \cdots, N-1\}$, 
\eqref{asymptotics and curvatures} holds by setting its right-hand side to $+\infty$
\end{proposition}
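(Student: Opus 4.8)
The plan is to establish the asymptotics \eqref{asymptotics and curvatures} by a careful comparison between the solution $u$ of the two-phase problem and the solution of a suitable one-phase (constant coefficient) problem near the contact point $y$, since on the short time scale $t\to+0$ the heat that reaches the ball $B_r(x)$ comes only from a small neighborhood of $\partial\Omega$, where $\sigma$ equals the constant $\sigma_s$ (for problem \eqref{heat equation initial-boundary}--\eqref{heat initial}) or jumps across $\partial\Omega$ between $\sigma_s$ and $\sigma_m$ (for problem \eqref{heat Cauchy}). First I would recall the known one-phase result: if $v$ solves the heat equation $v_t=\sigma_s\Delta v$ in $\Omega$ with $v=1$ on $\partial\Omega$ and $v=0$ at $t=0$, then $t^{-(N+1)/4}\int_{B_r(x)}v(z,t)\,dz$ converges to a multiple of $\Pi_{\partial\Omega}(r,y)^{-1/2}$; this is essentially the content of the one-phase version of Proposition~\ref{prop:heat content asymptotics} and follows from the half-space/parabolic-scaling analysis together with the fact that $\overline{B_r(x)}$ touches $\partial\Omega$ only at $y$, where the local geometry is encoded by the principal curvatures $\kappa_j(y)$.

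The key steps, in order, would be: (i) localize — using the maximum principle and Gaussian bounds for the two-phase fundamental solution, show that replacing $\Omega$ (resp.\ the two-phase configuration) by its intersection with a fixed small ball $B_\rho(y)$ changes $\int_{B_r(x)}u(z,t)\,dz$ by an amount that is $o(t^{(N+1)/4})$ (indeed exponentially small) as $t\to+0$, because $\dist(B_r(x),\partial\Omega\setminus B_\rho(y))>0$; (ii) inside $B_\rho(y)$ the core $D$ is irrelevant since $\dist(B_\rho(y),\overline D)>0$ for $\rho$ small, so the only coefficient discontinuity that matters is the one across $\partial\Omega$; (iii) for problem \eqref{heat equation initial-boundary}--\eqref{heat initial} there is no discontinuity there (the medium is just $\sigma_s$ on the inner side and the Dirichlet datum is prescribed), so we are reduced exactly to the one-phase asymptotics with conductivity $\sigma_s$, giving the constant $2\sigma_s^{(N+1)/4}c(N)$; (iv) for the Cauchy problem \eqref{heat Cauchy}, flatten $\partial\Omega$ near $y$ and compare with the explicit one-dimensional transmission problem on $\mathbb R$ with conductivities $\sigma_s$ (inside) and $\sigma_m$ (outside) and initial datum the Heaviside function — the solution is a combination of error functions, and the flux/heat-content constant picks up the extra factor $\frac{2\sqrt{\sigma_m}}{\sqrt{\sigma_s}+\sqrt{\sigma_m}}$ coming from the transmission (continuity of $u$ and of $\sigma\partial_\nu u$) at the interface; (v) reinstate the curvature corrections via the standard parabolic rescaling $z\mapsto y+\sqrt t\,z$ near $y$, which turns $\partial\Omega$ into its tangent plane to leading order and produces the Jacobian factor $\Pi_{\partial\Omega}(r,y)^{-1/2}$ exactly as in the one-phase computation of \cite{MS2007} or \cite{Strieste2016}.

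I expect the main obstacle to be step (iv) together with making the comparison in step (v) rigorous in the presence of the coefficient jump: one needs Gaussian upper and lower bounds, and Hölder/gradient estimates up to the interface $\partial\Omega$, for the two-phase parabolic fundamental solution, and one must check that the one-dimensional transmission-problem profile is genuinely the leading-order behavior after the anisotropic rescaling (so that the constant is exactly $\frac{2\sqrt{\sigma_m}}{\sqrt{\sigma_s}+\sqrt{\sigma_m}}\sigma_s^{(N+1)/4}c(N)$ and not merely of that order). The degenerate case $\kappa_j(y)=1/r$ for some $j$ is handled separately: there $\Pi_{\partial\Omega}(r,y)=0$, the ball is internally tangent to $\partial\Omega$ along a degenerate direction, and a one-sided (lower) bound via comparison with balls of slightly larger radius forces the rescaled integral to blow up, so the limit is $+\infty$. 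Since the statement is quoted from \cite{Strieste2016}, in the paper itself this proposition is simply cited; the sketch above is the route one would follow to reprove it.
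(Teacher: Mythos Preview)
Your observation is correct: the paper does not prove this proposition. It is stated as Proposition~D with an explicit citation to \cite[Proposition~2.2, pp.~171--172]{Strieste2016}, and no proof is given in the present paper. Your final sentence already notes this, and the sketch you outline (localization via Gaussian bounds, irrelevance of the core $D$ near $\partial\Omega$, reduction to a one-phase problem for \eqref{heat equation initial-boundary}--\eqref{heat initial}, comparison with the explicit one-dimensional transmission profile for \eqref{heat Cauchy}, and the curvature factor via parabolic rescaling) is indeed the strategy followed in the cited reference; in particular the factor $\frac{2\sqrt{\sigma_m}}{\sqrt{\sigma_s}+\sqrt{\sigma_m}}$ arises there from constructing explicit sub- and super-solutions modeled on the one-dimensional two-phase heat problem, which is consistent with your step~(iv).
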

Notice that, if $\sigma_s=\sigma_m$, the constant for problem \eqref{heat Cauchy} is just half of that  for problem \eqref{heat equation initial-boundary}--\eqref{heat initial}.
\par
By examining the proof of Proposition \ref{prop:heat content asymptotics} given in \cite{Strieste2016}, we can also specify the initial behavior of the solution of problem \eqref{heat Cauchy}.

\begin{proposition}[\cite{Strieste2016}]
\label{prop:the initial limits on the interface} 
As $t \to +0$, the solution $u$ of  problem \eqref{heat Cauchy} converges to the number $\frac {\sqrt{\sigma_m}}{\sqrt{\sigma_s}+\sqrt{\sigma_m}}$, uniformly on $\partial\Omega$.
\end{proposition}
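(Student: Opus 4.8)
\textbf{Proof plan for Proposition \ref{prop:the initial limits on the interface}.}
The statement to establish is that, for the solution $u$ of the Cauchy problem \eqref{heat Cauchy}, one has $u(x,t)\to \frac{\sqrt{\sigma_m}}{\sqrt{\sigma_s}+\sqrt{\sigma_m}}$ uniformly for $x\in\partial\Omega$ as $t\to+0$. The plan is to reduce this to a one-dimensional (half-line) heat-transmission problem. Near a boundary point $y\in\partial\Omega$, the domain $\Omega$ looks, to leading order, like a half-space, with conductivity $\sigma_s$ on the inside and $\sigma_m$ on the outside, and initial data $0$ on the inside and $1$ on the outside. Since the parabolic scaling $(x,t)\mapsto(x/\sqrt t,\,1)$ zooms in on the flat interface and the initial datum is scale-invariant (being a characteristic function of a half-space), the rescaled solution converges to the solution of the model problem on $\mathbb R$: $v_t=(\sigma(s)v_s)_s$ with $\sigma=\sigma_s$ for $s>0$, $\sigma=\sigma_m$ for $s<0$, and $v(\cdot,0)=\mathcal X_{\{s<0\}}$. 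Evaluating at the interface $s=0$ gives the claimed constant.

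First I would set up the model problem on $\mathbb R$. Because it is self-similar, one seeks $v(s,t)=V(s/\sqrt t)$; this reduces to two ODEs on the half-lines matched by the transmission conditions $v$ and $\sigma v_s$ continuous across $s=0$. The solution is an affine combination of complementary error functions: $v(s,t)=A+B\,\mathrm{Erfc}(s/\sqrt{4\sigma_s t})$ for $s>0$ and $v(s,t)=C+E\,\mathrm{Erfc}(-s/\sqrt{4\sigma_m t})$ for $s<0$. Imposing $v\to0$ as $s\to+\infty$, $v\to1$ as $s\to-\infty$, continuity of $v$ at $0$, and continuity of the flux $\sigma v_s$ at $0$ (where $\partial_s\mathrm{Erfc}(s/\sqrt{4\sigma t})\big|_{0}$ carries a factor $1/\sqrt\sigma$) yields a $2\times2$ linear system whose solution gives the value at $s=0$ equal to $\frac{\sqrt{\sigma_m}}{\sqrt{\sigma_s}+\sqrt{\sigma_m}}$. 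This is the routine computation that underlies the stated constant and essentially already appears implicitly in the proof of Proposition \ref{prop:heat content asymptotics} in \cite{Strieste2016}.

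Second, and this is where the real work lies, I would justify the passage from the genuine problem \eqref{heat Cauchy} to the model problem, \emph{uniformly} in $y\in\partial\Omega$. The clean way is a barrier/comparison argument: since $\partial\Omega$ is $C^2$ and compact, there is $r_0>0$ such that at every $y\in\partial\Omega$ one can fit interior and exterior tangent balls of radius $r_0$; flattening (or directly using spherical comparison domains) one sandwiches $u$ between solutions of transmission problems on a ball and on the complement of a ball of radius $r_0$, for which the one-dimensional self-similar analysis above applies up to errors that vanish as $t\to+0$ at a rate depending only on $r_0$, $N$, $\sigma_s$, $\sigma_m$ — hence uniformly in $y$. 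One must also control the influence of the ``far field'': the contribution to $u(y,t)$ coming from the region where $\Omega$ deviates from its tangent half-space, and from the core $D$ (where the conductivity is $\sigma_c$), is exponentially small in $t$ by standard Gaussian heat-kernel bounds, since $\mathrm{dist}(y,\overline D)\ge\mathrm{dist}(y,\partial\Omega)\cdot 0 $ — more precisely $\overline D\subset\subset\Omega$ so $\mathrm{dist}(\partial\Omega,\overline D)>0$, and $\sigma$ equals the piecewise constant $\sigma_s/\sigma_m$ pair outside a fixed neighborhood of $D$. Since all the relevant constants (the radius $r_0$, the separation $\mathrm{dist}(\partial\Omega,\overline D)$, the conductivities) are fixed, all error estimates are uniform over $\partial\Omega$, which gives the asserted uniform convergence.

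\textbf{Main obstacle.} The computation of the model constant is elementary; the technical heart is the uniform-in-$y$ comparison. The cleanest route, which I would take, is to appeal directly to the construction already carried out in the proof of Proposition \ref{prop:heat content asymptotics} in \cite{Strieste2016}: there, the integral $\int_{B_r(x)}u\,dz$ is analyzed by exactly such a localization, and the pointwise limit of $u$ on $\partial\Omega$ is an intermediate output of that analysis (the heat-content asymptotics \eqref{asymptotics and curvatures} are obtained by integrating the local self-similar profile, whose value at the interface is precisely $\frac{\sqrt{\sigma_m}}{\sqrt{\sigma_s}+\sqrt{\sigma_m}}$). Thus the proposition is, as the paper states, a matter of "examining the proof" of Proposition \ref{prop:heat content asymptotics}: one extracts the boundary value of the limiting profile and notes that the convergence there is uniform because $\partial\Omega$ is compact and $C^2$ and $\overline D$ stays at positive distance from $\partial\Omega$.
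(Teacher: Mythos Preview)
Your proposal is correct and takes essentially the same approach as the paper: both extract the boundary value of $u$ from the barrier construction already carried out in \cite{Strieste2016} for Proposition~\ref{prop:heat content asymptotics}. The paper's proof is simply more terse, quoting directly the sandwich inequality \cite[(22), p.~174]{Strieste2016} (which encodes exactly the one-dimensional self-similar barriers and exponential far-field errors you describe), evaluating it at $x\in\partial\Omega$, letting $t\to+0$, and then sending the barrier parameter $\varepsilon\to+0$ so that the upper and lower bounds both collapse to $\frac{\sqrt{\sigma_m}}{\sqrt{\sigma_s}+\sqrt{\sigma_m}}$.
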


\begin{proof}
We refer to \cite{Strieste2016} for the relevant notations and formulas.
In fact, the inequalities \cite[(22), p. 174]{Strieste2016} yield in particular that
\begin{multline*}
(1-\varepsilon)\frac \mu{\theta_-} F_-(0) -2E_1e^{-\frac {E_2}t} \le u(x,t) \le (1+\varepsilon)\frac \mu{\theta_+} F_+(0) +2E_1e^{-\frac {E_2}t}\\ 
\mbox{ for every } (x,t) \in \partial\Omega \times (0,t_\varepsilon].
\end{multline*}
Thus,
$$
(1-\varepsilon)\frac \mu{\theta_-} F_-(0) \le \liminf_{t\to 0^+} u(x,t) \le \limsup_{t\to 0^+} u(x,t) \le (1+\varepsilon)\frac \mu{\theta_+} F_+(0)
$$
for every $\ve>0$, and hence our claim follows by observing that
$$
(1-\varepsilon)\frac \mu{\theta_-} F_-(0) \ \mbox{ and }
(1+\varepsilon)\frac \mu{\theta_+} F_+(0) \to \frac {\sqrt{\sigma_m}}{\sqrt{\sigma_s}+\sqrt{\sigma_m}}\ \mbox{ as }
\varepsilon \to +0,
$$
since both $F_-(0)$ and $F_+(0)$ converge to $F(0) =\frac 12$ as $\varepsilon \to +0$.
\end{proof}

\medskip

We conclude this section by recalling two results from \cite{SBessatsu2017}. The first one is a lemma 
that, for an elliptic equation, states the uniqueness of the reconstruction of the conductivity $\si$  from boundary measurements.

\begin{lemma}[{\cite[Lemma 3.1]{SBessatsu2017}}]
\label{le: the unique determination}
 Let $\Omega$ be a bounded $C^2$-regular domain in $\mathbb R^N\ (N \ge 2)$ with boundary $\partial\Omega$. Let $D_1$ and $D_2$ be two, possibly empty, bounded Lipschitz open sets, each of which may have finitely many connected components. Assume that $D_1 \subset D_2 \subset \overline{D_2} \subset \Omega$ and that both $\Omega\setminus\overline{D_1}$ and  $\Omega\setminus\overline{D_2}$ are connected. 
\par
Let $\sigma_j :\Om\to\RR$ $(j=1,2)$ be given by
$$
\sigma_j =
\begin{cases}
\sigma_c \quad&\mbox{in } D_j, \\
\sigma_s \quad&\mbox{in } \Omega \setminus D_j,
\end{cases}
$$
where $\sigma_c, \sigma_s$ are positive constants with $\sigma_c \not=\sigma_s$.  
\par
For a non-zero function $g \in L^2(\partial\Omega)$, let $v_j \in H^1(\Omega)$ $(j=1,2)$ satisfy
\begin{equation}
\label{modified poisson equation}
\mbox{\rm div}(\sigma_j\nabla v_j) = v_j -1\ \mbox{ in } \Omega\ \mbox{ and }\  \sigma_s  {\partial_\nu v_j} = g \ \mbox{ on } \partial\Omega.
\end{equation}
\par
If $v_1=v_2$  on $\partial\Omega$, then $v_1=v_2$ in $\Omega$  and $D_1=D_2$.
\end{lemma}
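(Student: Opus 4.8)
The plan is to reduce the problem to a question of analytic continuation combined with a transmission/jump analysis across the interfaces. Set $w = v_1 - v_2$. In the outer region $\Omega \setminus \overline{D_2}$ both $\sigma_1$ and $\sigma_2$ equal $\sigma_s$, so $w$ satisfies the \emph{same} equation there, namely $\sigma_s \Delta w = w$ in $\Omega \setminus \overline{D_2}$. By hypothesis $w = 0$ on $\partial\Omega$, and from the two Neumann conditions $\sigma_s \partial_\nu v_1 = g = \sigma_s \partial_\nu v_2$ on $\partial\Omega$ we also get $\partial_\nu w = 0$ on $\partial\Omega$. Thus $w$ has vanishing Cauchy data on $\partial\Omega$ and solves a second-order elliptic equation with constant (hence analytic) coefficients in the connected open set $\Omega \setminus \overline{D_2}$. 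By the unique continuation principle (Holmgren's theorem suffices, since the coefficients are constant and $\partial\Omega$ is $C^2$, non-characteristic), $w \equiv 0$ on the connected component of $\Omega \setminus \overline{D_2}$ touching $\partial\Omega$; since $\Omega \setminus \overline{D_2}$ is connected, $w \equiv 0$ on all of $\Omega \setminus \overline{D_2}$. In particular $v_1 = v_2$ and $\nabla v_1 = \nabla v_2$ on $\partial D_2$ (in the appropriate trace sense).

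Next I would propagate this equality inward. First consider the annular region $D_2 \setminus \overline{D_1}$: there $\sigma_1 = \sigma_s$ while $\sigma_2 = \sigma_c$. The transmission conditions for $v_2$ across $\partial D_2$ read $[v_2] = 0$ and $[\sigma_2 \partial_\nu v_2] = 0$, i.e. $\sigma_s \partial_\nu v_2|_+ = \sigma_c \partial_\nu v_2|_-$. Combining with $v_1 = v_2$ and $\sigma_s \partial_\nu v_1 = \sigma_s \partial_\nu v_2$ on $\partial D_2$ (the latter from the previous paragraph, using that $\sigma_1$ is continuous across $\partial D_2$), one gets two competing identities. The key point is that if $D_2 \setminus \overline{D_1}$ had nonempty interior one could again run a unique continuation argument: $v_1$ solves $\sigma_s \Delta v_1 = v_1 - 1$ near $\partial D_2$ from inside, and one shows $v_1$ and $v_2$ share Cauchy data there, forcing them to agree throughout $D_2\setminus\overline{D_1}$; then reading off the jump of $\sigma_c \partial_\nu v_2$ versus $\sigma_s \partial_\nu v_1 = \sigma_s \partial_\nu v_2$ at $\partial D_2$ from the \emph{inside} forces $\partial_\nu v_2|_- = 0$ on $\partial D_2$, and combined with $[v_2]=0$ and the equation this should be pushed to a contradiction with $\sigma_c \neq \sigma_s$ unless $\partial D_2 \subset \overline{D_1}$, i.e. $D_2 = D_1$. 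Actually the cleanest route is: since $D_1 \subset D_2$, suppose $D_2 \neq D_1$, so the open set $D_2 \setminus \overline{D_1}$ is nonempty; pick a component of it and a piece of $\partial D_2$ on its boundary, and derive that $v_2$ has both $v_2 = v_1$ and $\partial_\nu v_2 = \partial_\nu v_1$ there while \emph{also} (from the jump relation with $\sigma_c \neq \sigma_s$) a different normal-derivative value, which is impossible.

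I expect the main obstacle to be handling the low regularity of the interfaces: $D_1, D_2$ are only Lipschitz, so "Cauchy data on $\partial D_2$" and the interface conditions must be interpreted in a weak ($H^{1}$-trace / variational) sense, and the unique continuation step from $\partial D_2$ inward is not immediately available by Holmgren on a merely Lipschitz hypersurface. The way around this is to avoid continuing across $\partial D_2$ directly and instead argue as follows: $w = v_1 - v_2$ vanishes in a full neighborhood of $\partial D_2$ inside $\Omega \setminus \overline{D_2}$ (by the first paragraph, $w \equiv 0$ on the whole outer region, which is open and has $\partial D_2$ on its boundary). Now $v_2$ restricted to a neighborhood of any point of $\partial D_2$ satisfies an elliptic transmission problem, and one uses the weak formulation: testing the equations for $v_1$ and $v_2$ against a common test function supported near $\partial D_2$ and using $v_1 = v_2$, $\sigma_s\partial_\nu v_1 = \sigma_s \partial_\nu v_2$ on $\partial D_2$ (valid as $H^{-1/2}$ conormal derivatives since $\sigma_1 \equiv \sigma_s$ across $\partial D_2$), one concludes the conormal derivative of $v_2$ from inside equals that from outside divided by $\sigma_c/\sigma_s \neq 1$ — forcing it to vanish, hence $v_2 \in H^1_{\mathrm{loc}}$ solves $\sigma_s \Delta v_2 = v_2 - 1$ across $\partial D_2$ with no jump in the (unweighted) gradient, so by elliptic regularity $v_2$ is analytic across $\partial D_2$; then unique continuation of the analytic function $v_2$ (which agrees with $v_1$, analytic on the outer side) shows $v_2 \equiv v_1$ on a neighborhood, and iterating / using connectedness of $\Omega\setminus\overline{D_1}$ we reach every point outside $\overline{D_1}$, and the jump relation at $\partial D_1$ for $v_1$ (with $\sigma_c \neq \sigma_s$) versus the no-jump we just proved yields $\partial D_1 = \partial D_2$, i.e. $D_1 = D_2$, and then $v_1 = v_2$ in all of $\Omega$ by uniqueness for the (now identical) elliptic problem.
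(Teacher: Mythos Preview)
The paper does not give its own proof of this lemma: it is simply quoted from \cite{SBessatsu2017}. So there is no in-paper argument to compare against, only the question of whether your plan is sound.

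Your first step is correct and is exactly the standard opening move: with $w=v_1-v_2$ one has $\sigma_s\Delta w=w$ in the connected set $\Omega\setminus\overline{D_2}$, and $w$ has vanishing Cauchy data on $\partial\Omega$, so unique continuation yields $v_1\equiv v_2$ in $\Omega\setminus\overline{D_2}$.

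The second step, however, has a genuine gap. On $\partial D_2$ (read from the inside) one finds, using your first step together with the transmission conditions,
\[
v_1|_-=v_2|_-\qquad\text{and}\qquad \sigma_s\,\partial_\nu v_1|_-=\sigma_c\,\partial_\nu v_2|_-.
\]
This is \emph{not} the same Cauchy data for $v_1$ and $v_2$; the normal derivatives differ by the factor $\sigma_c/\sigma_s$. Moreover, inside $D_2\setminus\overline{D_1}$ the two functions satisfy \emph{different} equations ($\sigma_s\Delta v_1=v_1-1$ versus $\sigma_c\Delta v_2=v_2-1$), so even identical Cauchy data would not let you invoke unique continuation to conclude $v_1=v_2$ there. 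Consequently the chain ``$v_1=v_2$ near $\partial D_2$ from inside $\Rightarrow$ $\partial_\nu v_2|_-=\partial_\nu v_2|_+$ $\Rightarrow$ $\partial_\nu v_2=0$'' is circular: its first implication already presupposes what you are trying to prove. Likewise the assertion that ``$v_2$ solves $\sigma_s\Delta v_2=v_2-1$ across $\partial D_2$ with no jump in the unweighted gradient'' is false: in $D_2$ one has $\sigma_c\Delta v_2=v_2-1$, and the transmission condition for $v_2$ forces a jump in $\partial_\nu v_2$ precisely when $\sigma_c\neq\sigma_s$ and $\partial_\nu v_2\neq 0$.

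In short, the unique-continuation half of your plan is fine; the interface step needs a different mechanism (for instance an energy/integral identity over $D_2$ that exploits $w=0$ on $\partial D_2$ together with the mismatch $\sigma_c\neq\sigma_s$, or the argument actually given in \cite{SBessatsu2017}) rather than a second application of unique continuation across $\partial D_2$.
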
 

The second result from \cite{SBessatsu2017} gives symmetry in a two-phase overdetermined problem of Serrin type in a special regime. Some preliminary notation is needed. We
let $D$ be a bounded open set of class $C^2$, which may have finitely many connected components, 
compactly contained in a ball $B_r(x)$ and such that $B_r(x)\setminus\overline{D}$ is connected. Also, we denote by $\sigma: B_r(x)\to\RR$  the conductivity distribution given by
$$
\sigma =
\begin{cases}
\sigma_c \quad&\mbox{in } D, \\
\sigma_s \quad&\mbox{in } B_r(x) \setminus D,
\end{cases}
$$
where $\sigma_c, \sigma_s$ are positive constants and $\sigma_c \not=\sigma_s$. 
\begin{theorem}[{\cite[Theorem 5.1]{SBessatsu2017}}]
\label{th:constant Neumann boundary condition} 
Let $v \in H^1(B_r(x))$ be the unique solution of the following boundary value problem:
\begin{equation}
\label{modified poisson and Hermholtz equation}
\mbox{\rm div}(\sigma\nabla v) = \be v -\ga < 0\ \mbox{ in } B_r(x)\ \mbox{ and }\  v = c \ \mbox{ on } \partial B_r(x),
\end{equation}
where $\be\ge 0, \ga > 0$ and $c$ are real constants. 
\par
If $v$ satisfies
\begin{equation}
\label{overdetermined Neumann 1}
\sigma_s\,\pa_\nu v= d\ \mbox{ on } \partial B_r(x),
\end{equation}
for some negative constant $d$, then $D$ must be a ball centered at $x$.
\end{theorem}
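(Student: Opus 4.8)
\textbf{Proof proposal for Theorem \ref{th:constant Neumann boundary condition}.}

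The plan is to deduce the conclusion by combining the uniqueness of the one-phase Serrin overdetermined problem (via the method of moving planes) with a reflection argument that transfers symmetry from the outer ball $\partial B_r(x)$ to the core $D$. First I would observe that, up to a translation, we may take $x=0$. Consider the function $v$ solving \eqref{modified poisson and Hermholtz equation}. The overdetermined Neumann condition \eqref{overdetermined Neumann 1} prescribes that $\sigma_s\,\pa_\nu v$ is a single constant $d$ on the entire sphere $\partial B_r(0)$. The first key step is to run the method of moving planes on $v$ in $B_r(0)$. Since $B_r(0)$ is a ball and the boundary data (both Dirichlet value $c$ and Neumann value $d$) are constant on $\partial B_r(0)$, the domain and the exterior data are invariant under every reflection across a hyperplane through the origin; however, the coefficient $\sigma$ is \emph{not} a priori symmetric, because $D$ need not be. So the moving-plane argument must be applied to the pair $(v,\sigma)$ jointly: one shows that for every direction $e\in\mathbb S^{N-1}$, the reflection $\sigma^\sharp$ of $\sigma$ across the hyperplane $\{e\cdot y=0\}$ produces, together with the reflected solution $v^\sharp$, another solution of the same overdetermined problem on $B_r(0)$ with the same boundary data.

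The second key step is then the uniqueness statement of Lemma \ref{le: the unique determination}: that lemma, applied after the harmless rescaling turning \eqref{modified poisson and Hermholtz equation} into the normalized form $\mathrm{div}(\sigma\nabla v)=v-1$, says that the conductivity distribution (hence the set $D$) is uniquely determined by the Cauchy data $(v,\sigma_s\pa_\nu v)$ on $\partial B_r(0)$, provided the two configurations are nested. Since $v$ and $v^\sharp$ share the same Dirichlet and Neumann data on $\partial B_r(0)$, and since one of $D$, $D^\sharp$ can always be arranged to contain the other after moving the plane from infinity (this monotonicity is exactly what the moving-plane machinery provides, combined with the containment hypothesis $\{e\cdot y\le 0\}\cap D \subset$ reflection once the critical plane is reached), Lemma \ref{le: the unique determination} forces $D=D^\sharp$ for every direction $e$. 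A set invariant under reflection across every hyperplane through a fixed point is a union of concentric spheres centered at that point; since $D$ is open with finitely many components and compactly contained in $B_r(0)$, it must in fact be a ball (or a union of concentric annuli/balls, but connectedness of each component plus the radial symmetry forces a centered ball) centered at $x=0$. This is exactly the desired conclusion.

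Alternatively — and this is likely the route actually taken, since Lemma \ref{le: the unique determination} is stated just before — one avoids the delicate nesting verification inside the moving planes by first establishing \emph{radial symmetry of $v$} directly. Here is the cleaner variant: apply the classical Serrin moving-plane argument not to recover $D$ but only to the overdetermined data. The point is that the exterior problem (domain $B_r(0)$, right-hand side $\beta v-\gamma$ which is a function of $v$ alone, boundary values $c$ and $d$ constant) is formally a Serrin-type problem; the only obstruction to concluding radial symmetry of $v$ by Serrin's theorem is the non-symmetric coefficient $\sigma$. To handle this, one uses the reflected pair $(v^\sharp,\sigma^\sharp)$ as a competitor and invokes the maximum principle / Hopf lemma for the \emph{difference} $v-v^\sharp$, which solves a divergence-form equation with bounded measurable coefficients away from the interfaces and satisfies the transmission conditions; the boundary conditions give $v-v^\sharp=0$ and $\pa_\nu(v-v^\sharp)=0$ on the critical plane's image, and a Serrin corner-lemma argument at the endpoints closes the reflection. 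Once $v$ is shown to be radially symmetric about the origin, $\sigma$ must be radial too (because $\sigma$ is recovered from $v$: indeed $\sigma\nabla v$ is determined and $v$ has no critical points in the annular region by the Hopf lemma and the sign of the right-hand side, so $\sigma = $ (a radial quantity)$/|\nabla v|$ type expression is radial where $\nabla v\neq 0$), whence $D=\{\sigma=\sigma_c\}$ is radial, i.e.\ a centered ball.

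\textbf{Main obstacle.} The technical heart is making the moving-plane / reflection argument rigorous across the interface $\partial D$, where $v$ has only $C^{1,\alpha}$ regularity (from each side) and the equation is discontinuous. One must ensure that the comparison function $v-v^\sharp$ still obeys a maximum principle: it satisfies an elliptic equation in divergence form in each phase and the conormal transmission condition $[\sigma\pa_\nu v]=0$ is compatible with reflection, but the reflected interface $\partial D^\sharp$ and the original $\partial D$ generally differ, so $v-v^\sharp$ solves a transmission problem with \emph{two} interfaces and one needs a weak maximum principle valid in that setting together with a strong maximum principle / Hopf lemma up to the outer boundary $\partial B_r(0)$ (where $\sigma\equiv\sigma_s$ is smooth, so Hopf applies there cleanly). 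Establishing this, and the Serrin corner lemma at the endpoints of the moving plane despite the interior irregularity, is the step I expect to require the most care — which is precisely why the paper isolates Lemma \ref{le: the unique determination} as a black box, so that the reflection argument only needs to produce \emph{equality of Cauchy data}, not a full symmetric comparison principle.
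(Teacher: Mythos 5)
This theorem is \emph{quoted}, not proved, in the present paper: it is recalled as Theorem~\ref{th:constant Neumann boundary condition} with the citation ``\cite[Theorem~5.1]{SBessatsu2017}''. So there is strictly no ``paper's own proof'' to compare your proposal against; I can only assess your argument on its own merits and against what is consistent with the rest of the paper.

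\textbf{The moving-plane route is very likely a dead end, and the paper says so.} The authors remark, right after Theorem~\ref{th:constant flow serrin}, that for two-phase conductors the method of moving planes ``cannot be applied.'' Your proposal is built entirely around that method, and the two places where you appeal to it are exactly where the argument breaks. First, you assert that ``one of $D$, $D^\sharp$ can always be arranged to contain the other after moving the plane from infinity; this monotonicity is exactly what the moving-plane machinery provides.'' It does not: the moving-plane scheme compares \emph{values of the solution} $v$ and $v^\lambda$, not the underlying sets, and even the comparison $v\le v^\lambda$ on the cap requires a maximum principle for $v-v^\lambda$. Because $v^\lambda(y)=v(y^\lambda)$ solves the equation with the \emph{reflected} coefficient $\sigma^\lambda$, the difference $v-v^\lambda$ does not satisfy a single divergence-form equation with a sign, so the usual weak/strong maximum principle and Serrin's corner lemma are unavailable unless $\sigma\equiv\sigma^\lambda$ — which is precisely what one is trying to prove. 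Your second variant (``show $v$ is radial first, then recover $\sigma$'') has the same obstruction at the reflection step; moreover, the deduction ``$v$ radial $\Rightarrow\sigma$ radial'' via ``$\sigma=$ (radial quantity)$/|\nabla v|$'' is not meaningful here: $\sigma$ is piecewise constant and enters only through the weak (divergence-form) equation and the conormal transmission condition, not through a pointwise algebraic relation with $\nabla v$. You flag the obstacle at the end, but flagging it is not closing it; as written the proposal has a genuine gap at its central step.

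\textbf{What a working argument looks like.} The natural way to exploit the rotational invariance of the data that avoids moving planes entirely is unique continuation: in the shell $B_r(x)\setminus\overline D$ the coefficient is the constant $\sigma_s$, so $v$ is real analytic there and solves $\sigma_s\Delta v=\beta v-\gamma$ with constant (hence rotation-invariant) Cauchy data $v=c$, $\sigma_s\,\pa_\nu v=d$ on $\partial B_r(x)$. Holmgren's uniqueness theorem forces $v$ to coincide, on the annulus $\{ \rho^*<|y-x|<r\}$ with $\rho^*=\max_{\overline D}|\cdot-x|$, with the radial solution $V(y)=\tilde v(|y-x|)$ of the corresponding ODE with data $\tilde v(r)=c$, $\tilde v'(r)=d/\sigma_s$; real analyticity and connectedness of $B_r(x)\setminus\overline D$ then give $v\equiv V$ on the whole shell. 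At this point one has the transmission conditions on $\partial D$ expressed purely in terms of $V$ and the geometry of $\partial D$, and (in the spirit of Lemma~\ref{le: the unique determination}, which indeed compares Cauchy data for nested configurations, and of the Hopf-lemma arguments the paper uses elsewhere) one shows that these are compatible only when $\partial D$ consists of spheres centered at $x$; the connectedness hypotheses then give a ball. The ``nesting'' that Lemma~\ref{le: the unique determination} requires is obtained here not from moving planes but by comparing the actual configuration with an explicit radial one once $v$ is known to be radial. This route matches the tools the present paper uses to \emph{apply} the theorem (radiality of the Laplace transform in the shell, Hopf's lemma, the unique-determination lemma), whereas the moving-plane reflection you propose is precisely what the authors say does not go through in the two-phase case.
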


\renewcommand*{\thetheorem}{\arabic{section}.\arabic{theorem}}
\setcounter{theorem}{0}

\setcounter{equation}{0}

\section{The constant flow property in the shell}
\label{section3}

In this section, we will carry out the proof of Theorem \ref{th:constant flow}. 

\subsection{Preliminary lemmas}

We start by a lemma that informs on the rough short-time asymptotic behavior of the solution of either \eqref{heat equation initial-boundary}--\eqref{heat initial} or \eqref{heat Cauchy} away from $\pa\Om$.
For $\rho > 0$, we use the following notations:
$$
\Omega_\rho = \{ x \in \Omega\ :\mbox{ dist}(x,\partial\Omega) \ge \rho \}\ \mbox{ and }\ \Omega_\rho^c
=\{ x \in \mathbb R^N\setminus\Omega\ : \mbox{ dist}(x,\partial\Omega) \ge \rho \}.
$$

\begin{lemma} 
\label{le:initial behavior and decay at infinity} 
Let $u$ be the solution of either problem \eqref{heat equation initial-boundary}--\eqref{heat initial} or \eqref{heat Cauchy}. 
\begin{itemize}
\item[\rm (1)]  The following inequalities hold:
$$
0 < u(x,t) < 1  \quad \mbox{ for every }(x,t) \in \Omega\times (0,+\infty) \mbox{ or } (x,t)\in \mathbb R^N \times (0,+\infty).
$$
\item[\rm (2)] For every $\rho > 0$, there exist two positive constants $B$ and $b$ such that
$$
0 < u(x,t) < B e^{-\frac bt}\quad \mbox{ for every } (x,t) \in \Omega_\rho \times (0,+\infty)
$$
and, moreover, if $u$ is the solution of \eqref{heat Cauchy}, then
$$
0 < 1-u(x,t) < B e^{-\frac bt}\quad \mbox{ for every } (x,t) \in \Omega_\rho^c \times (0,+\infty).
$$
Here $B$ and $b$ depend only on $N, \sigma_c, \sigma_s, \sigma_m$ and $\rho$.
 \item[\rm(3)] The solution $u$ of \eqref{heat Cauchy} is such that 
 $$
 \lim\limits_{|x| \to \infty} (1-u(x,t)) = 0  \quad \mbox{ for every } t \in (0,+\infty).
 $$
 \end{itemize}
\end{lemma}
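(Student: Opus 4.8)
The plan is to establish the three assertions by classical maximum-principle and comparison arguments, treating the boundary condition and the transmission condition on $\pa D$ carefully since $\sg$ is only piecewise constant.

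\textbf{Part (1).} First I would observe that $u\equiv 0$ and $u\equiv 1$ are, respectively, a subsolution and a supersolution of the diffusion equation $u_t=\dv(\sg\na u)$ in the appropriate weak (variational) sense, compatible with the initial and boundary data in each of the two problems. Since the initial datum satisfies $0\le u(\cdot,0)\le 1$ and the boundary datum in \eqref{heat Dirichlet} equals $1$ (and for \eqref{heat Cauchy} the datum $\cX_{\Om^c}$ also lies in $[0,1]$), the weak comparison principle for the parabolic transmission problem yields $0\le u\le 1$ everywhere. Strict inequality then follows from the strong maximum principle applied on each phase $D$ and $\Om\setminus\overline D$ separately (where the coefficients are smooth), combined with the Hopf lemma and the transmission condition $[\sg\pa_\nu u]=0$ to propagate strictness across $\pa D$: if $u$ vanished at an interior point at some $t>0$ it would vanish identically on a whole phase up to that time, forcing a contradiction at $\pa D$ or at $\pa\Om$ (where $u=1$), and similarly for the upper bound. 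For \eqref{heat Cauchy} one also invokes that $u\not\equiv 0,1$ because the initial datum is not constant.

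\textbf{Part (2).} The idea is to build an explicit radial supersolution (a barrier) that decays like $e^{-b/t}$ as $t\to 0^+$ on the set $\Om_\rho$. Concretely, I would fix $\rho>0$ and cover $\Om_\rho$ by the requirement that for each $x\in\Om_\rho$ there is a ball $B_{\rho}(x_0)$ with $x$ near its center and staying in the region where, up to the first time the heat "reaches" depth $\rho$, one can dominate $u$ by a Gaussian-type function. Since $0\le u\le 1$ on the (moving) boundary data, and away from $\pa\Om$ the only way heat enters is by diffusion through a layer of thickness $\ge\rho$, a standard parabolic barrier of the form $B\exp(-b|x-\pa\Om|^2/t)$ — adjusted to respect the jump in $\sg$, e.g. by working with $\min\sg$ and $\max\sg$ to get a clean super/subsolution on each phase — dominates $u$ on $\Om_\rho\times(0,\infty)$; the constants $B,b$ depend only on $N$, the conductivities, and $\rho$ because the barrier construction is local and translation/rotation invariant. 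The lower estimate $0<u$ is already in Part (1). For \eqref{heat Cauchy}, the function $1-u$ solves the same equation with initial datum $\cX_\Om$, so the identical barrier argument on $\Om_\rho^c$ (distance to $\pa\Om$ measured from outside) gives $0<1-u<Be^{-b/t}$ there.

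\textbf{Part (3).} For the solution of \eqref{heat Cauchy}, $w:=1-u$ solves $w_t=\dv(\sg\na w)$ with $w(\cdot,0)=\cX_\Om$, a compactly supported datum. Since $\sg$ is constant ($=\sg_m$) outside the fixed bounded set $\Om$, for $|x|$ large the equation for $w$ is just the constant-coefficient heat equation $w_t=\sg_m\De w$ with, on the sphere $\pa B_\rho$ enclosing $\Om$, boundary values bounded (by Part (1)) and, for any fixed $t$, one can compare $w$ with the solution of the pure Cauchy problem with the $L^1$, compactly supported datum $\cX_\Om$, which tends to $0$ as $|x|\to\infty$ for each $t>0$ by the explicit Gaussian representation; a comparison on the exterior of $\pa B_\rho$ with a suitable multiple of the Gaussian kernel then forces $w(x,t)\to 0$ as $|x|\to\infty$ for each fixed $t>0$.

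The main obstacle I anticipate is the low regularity at the interface $\pa D$: the maximum principle, Hopf lemma, and the barrier comparisons must all be carried out in a framework (weak solutions in $H^1$, or after flattening $\pa D$ by a $C^2$ diffeomorphism as in the references already cited) that is compatible with the transmission condition $[\sg\pa_\nu u]=0$ and the jump $[\pa_\nu u]\ne 0$ in the individual normal derivatives. Once the correct weak comparison principle for the two-phase parabolic operator is in place — which is standard but must be stated carefully — the rest of the argument is routine barrier construction; the only care needed is to make the barriers respect the jump by using the extremal values $\min\{\sg_c,\sg_s,\sg_m\}$ and $\max\{\sg_c,\sg_s,\sg_m\}$ on the relevant phases so that they remain genuine super/subsolutions across $\pa D$.
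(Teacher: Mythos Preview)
Your Part (1) matches the paper's: the strong comparison principle gives the strict inequalities directly.

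For Parts (2) and (3) the paper proceeds quite differently. It invokes Aronson's two-sided Gaussian bounds for the fundamental solution $g$ of $u_t=\dv(\sigma\nabla u)$ on $\RR^N$,
\[
M^{-1}t^{-N/2}e^{-\alpha|x-\xi|^2/t}\le g(x,\xi,t)\le Mt^{-N/2}e^{-|x-\xi|^2/(\alpha t)},
\]
which hold for arbitrary bounded measurable $\sigma$. For \eqref{heat Cauchy} one writes $1-u(x,t)=\int_\Omega g(x,\xi,t)\,d\xi$, and the upper bound immediately gives both the second inequality of (2) and assertion (3). For \eqref{heat equation initial-boundary}--\eqref{heat initial} the paper manufactures a comparison function $v(x,t)=\mu\int_{\mathcal N}g(x,\xi,t)\,d\xi$ over a tubular neighborhood $\mathcal N$ of $\partial\Omega$: the \emph{lower} Aronson bound makes $v\ge 1$ on $\partial\Omega\times(0,1]$ for suitable $\mu$, hence $u\le v$ by comparison; the upper bound then controls $v$ on $\Omega_\rho$. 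All interface difficulties are absorbed into Aronson's theorem, so no separate analysis at $\partial D$ is needed.

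Your barrier route has a real gap in Part (2). The recipe ``work with $\min\sigma$ and $\max\sigma$ to get a clean super/subsolution on each phase'' does not work as stated for divergence-form operators. If $\psi$ solves the constant-coefficient equation $\psi_t=\sigma_{\max}\Delta\psi$, then $\psi_t-\dv(\sigma\nabla\psi)$ contains, in the distributional sense, a singular layer $(\sigma_c-\sigma_s)\,\partial_\nu\psi$ supported on $\partial D$, whose sign you do not control; so $\psi$ need not be a weak supersolution across the interface. Even away from $\partial D$, a Gaussian in $\dist(x,\partial\Omega)$ has Laplacian of variable sign, so the pointwise inequality $(\sigma_{\max}-\sigma)\Delta\psi\ge 0$ also fails. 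A barrier argument can presumably be pushed through with more work, but your sketch does not supply the missing ingredient; the paper's appeal to Aronson's bounds is precisely the device that handles discontinuous $\sigma$ without any interface bookkeeping.
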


\begin{proof}
 Claim (1) follows from the strong comparison principle. 
\par
To prove (2) and (3), we make use of the Gaussian bounds for the fundamental solutions of parabolic equations due to
Aronson \cite[Theorem 1, p. 891]{Ar1967bams}(see also \cite[p. 328]{FaS1986arma}). In fact, if $g = g(x,\xi,t)$ is the fundamental solution of \eqref{heat equation initial-boundary}, there exist two positive constants $\alpha$ and $M$ depending only on  $N, \sigma_c, \sigma_s$ and $\sigma_m$ such that
\begin{equation}
\label{Gaussian bounds}
M^{-1}t^{-\frac N2}e^{-\frac{\alpha|x-\xi|^2}{t}}\le g(x,\xi,t) \le Mt^{-\frac N2}e^{-\frac{|x-\xi|^2}{\alpha t}} 
\end{equation}
 for all $x, \xi \in \mathbb R^N$ and $t \in (0,+\infty)$. 
\par
When $u$ is the solution of \eqref{heat Cauchy},  $1-u$ can be regarded as the unique bounded solution of \eqref{heat Cauchy} with initial data ${\mathcal X}_{\Omega}$ in place of $\cX_{\Om^c}$. Hence we have from \eqref{Gaussian bounds}:
 $$
 1-u(x,t) = \int\limits_{\mathbb R^N}  g(x,\xi,t){\mathcal X}_{\Omega}(\xi)\ d\xi \le M t^{-\frac N2}\int\limits_{\Omega} e^{-\frac{|x-\xi|^2}{\alpha t}} d\xi.
 $$
Since $|x-\xi| \ge \rho$ for every $x \in \Omega_\rho^c$ and $\xi\in\Omega$, it follows that 
$$
 t^{-\frac N2}\int\limits_{\Omega} e^{-\frac{|x-\xi|^2}{\alpha t}} d\xi \le e^{-\frac{\rho^2}{2\alpha t}}t^{-\frac N2}\int\limits_{\Omega} e^{-\frac{|x-\xi|^2}{2\alpha t}} d\xi \le (2\pi\alpha)^{\frac N2}e^{-\frac{\rho^2}{2\alpha t}},
$$
for every $x \in \Omega_\rho^c$, being $\Om\subset\RR^N$.
Thus, for any fixed $\rho>0$, the solution $u$ of \eqref{heat Cauchy} satisfies the inequality
 \begin{equation*}
 \label{decay at infinity Cauchy problem}
  1-u(x,t) \le M(2\pi\alpha)^{\frac N2}e^{-\frac{\rho^2}{2\alpha t}}\ \mbox{ for every } (x,t) \in \Omega_\rho^c\times (0,+\infty),
 \end{equation*}
which yields the second formula of (2), with $B=M\,(2\pi\alpha)^{\frac N2}$ and $b=\rho^2/2\al$, and (3),  by the arbitrariness of $\rho$. 
\par
The first formula of (2) certainly holds for  $t \in (1,+\infty)$, if we choose $B > 0$ so large as to have  that $B e^{-b} \ge 1$, since (1) holds.  Therefore, it suffices to consider the case in which $t \in (0,1]$.
\par
Let $\rho > 0$, set
$$
 \mathcal N = \{ x \in \mathbb R^N : \mbox{ dist}(x, \partial\Omega) < \rho/2 \},
$$
and define $v = v(x,t)$ by 
$$
 v(x,t) = \mu \int\limits_{\mathcal N} g(x,\xi,t)\ d\xi\quad \mbox{ for every }(x,t) \in \mathbb R^N\times(0,+\infty).
$$
Notice that $v$ is the unique bounded solution of 
$$
v_t = \mbox{ div}(\sigma \nabla v) \quad\mbox{ in }\  \mathbb R^N\times (0,+\infty) \ \mbox{ and }\ v\ =  \mu{\mathcal X}_{\mathcal N}\ \mbox{ on } \mathbb R^N\times \{0\}.
$$
The number $\mu > 0$ can be chosen such that
$$
v \ge 1\ (\ge u) \mbox{ on } \partial\Omega \times (0,1],
$$
because \eqref{Gaussian bounds} implies that 
$$
  v(x,t) \ge \mu M^{-1} t^{-\frac N2}\int\limits_{\mathcal N}e^{-\frac{\alpha|x-\xi|^2}{t}} d\xi \ge \mu M^{-1} t^{-\frac N2}\int\limits_{B_{\rho/2}(0)}e^{-\frac{\alpha|\xi|^2}{t}} d\xi 
$$
for $(x,t) \in \partial\Omega\times(0,+\infty)$.
Thus, the comparison principle yields that
\begin{equation}
\label{upper bound by v}
  u \le v\ \mbox{ in } \Omega\times (0,1].
\end{equation}
On the other hand, it follows from \eqref{Gaussian bounds} that
$$
v(x,t) \le \mu M t^{-\frac N2}\int\limits_{\mathcal N}e^{-\frac{|x-\xi|^2}{\alpha t}} d\xi\quad \mbox{ for } (x,t) \in \mathbb R^N\times(0,+\infty)
$$
and hence, since $|x-\xi| \ge \rho/2$ for every $x \in \Omega_\rho$ and $\xi \in \mathcal N$, we obtain that
$$
 v(x,t) \le \mu M t^{-\frac N2}e^{-\frac {\rho^2}{8\alpha t}}\int\limits_{\mathbb R^N}e^{-\frac{|x-\xi|^2}{2\alpha t}} d\xi = \mu M (2\pi\alpha)^{\frac N2}e^{-\frac {\rho^2}{8\alpha t}}
 $$
for every $(x,t) \in \Omega_\rho \times (0,+\infty)$. 
\par
This inequality and 
\eqref{upper bound by v} then yield the first formula of (2).   
\end{proof}
 
\medskip


Next lemma informs us that, as in the case of stationary level surfaces, surfaces having the constant flow property satisfy a certain balance law.

\begin{lemma}[A balance law] 
\label{le: balance law}
Let $\Gamma$ be a connected component of class $C^2$ of $\partial G$ satisfying \eqref{nearest component}. 
Set $r_0 = \mbox{\rm dist}(\Gamma,\partial\Omega) ( > 0).$ 
\par
Let $u$ be the solution of either problem \eqref{heat equation initial-boundary}--\eqref{heat initial} or \eqref{heat Cauchy}. 
Then, \eqref{constant flow surface partially} holds if and only if there exists
a function $c:(0, r_0)\times(0,+\infty)\to\RR$ such that 
\begin{equation}
\label{balance law}
\int_{B_r(x)} u(y,t)\,(y-x)\cdot\nu(x)\,dy = c(r,t) \
\mbox{ for every }\  (x, r,t) \in \Ga\times(0, r_0)\times(0,+\infty),
\end{equation}
where $\nu=\nu(x)$ denotes the outward unit normal vector to $\Ga$ at $x\in\Ga$.
\end{lemma}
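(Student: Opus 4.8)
The plan is to establish the equivalence in Lemma \ref{le: balance law} by relating the constant flow condition \eqref{constant flow surface partially} to the balance law \eqref{balance law} through differentiation under the integral sign and the divergence theorem. First I would fix $x \in \Gamma$, $t > 0$, and note that since $\Gamma = \partial G$ satisfies \eqref{nearest component} with $r_0 = \mathrm{dist}(\Gamma, \partial\Omega) > 0$, for every $r \in (0, r_0)$ the ball $B_r(x)$ is contained in $\Omega \setminus \overline{D}$; this is where the hypothesis \eqref{near the boundary} is used, since it guarantees $\mathrm{dist}(x, \partial\Omega) \le \mathrm{dist}(x, \overline{D})$ and hence $B_r(x) \cap \overline{D} = \varnothing$. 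On such a ball $u$ satisfies the single equation $u_t = \sigma_s \Delta u$ with $\sigma_s$ constant, so all the classical identities for the heat equation are available there.

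The key computation is to differentiate the left-hand side of \eqref{balance law} in $r$. Writing $I(x,r,t) = \int_{B_r(x)} u(y,t)\,(y-x)\cdot\nu(x)\,dy$, one has $\partial_r I = \int_{\partial B_r(x)} u(y,t)\,(y-x)\cdot\nu(x)\,dS_y$. Rather than pushing this directly, I would instead use the vector identity $(y-x)\cdot\nu(x) = \mathrm{div}_y\big(u(y,t)(y-x)\big)\cdot$-type manipulation: more precisely, apply the divergence theorem to the vector field $y \mapsto u(y,t)(y-x)\cdot\nu(x)$ is not quite a divergence, so the cleaner route is to observe that $(y-x)\cdot\nu(x)$ is a harmonic function of $y$ (it is linear), so $\int_{B_r(x)} u\,(y-x)\cdot\nu(x)\,dy$ can be handled via Green's second identity: $\int_{B_r(x)} \big(u \Delta h - h \Delta u\big)\,dy = \int_{\partial B_r(x)} \big(u\,\partial_\nu h - h\,\partial_\nu u\big)\,dS$ with $h(y) = (y-x)\cdot\nu(x)$. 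Since $\Delta h = 0$ and $\Delta u = u_t/\sigma_s$, and since on $\partial B_r(x)$ one has $\partial_\nu h = \nu(x)\cdot\frac{y-x}{|y-x|}$, repeated integration together with the fact that $\int_{\partial B_r(x)} \sigma_s\,\partial_\nu u(y,t)\,(y-x)\cdot\nu(x)\,dS$ localizes, in the limit as the relevant sphere shrinks, to a multiple of $\sigma_s\,\partial_\nu u(x,t)$ times a purely $r$-dependent geometric factor, is what produces the equivalence. The upshot is that $I(x,r,t)$ is a function of $(r,t)$ alone for all small $r$ if and only if $\sigma_s\,\partial_\nu u(x,t)$ is independent of $x \in \Gamma$; one direction follows by differentiating the balance law in $r$ and letting $r \to 0^+$ to recover the normal derivative, the other by integrating the flux identity in $r$. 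I would also invoke real-analyticity of $u$ in the space variables on $\Omega \setminus \overline{D}$ (interior parabolic regularity, $\sigma_s$ constant) so that knowing $\sigma_s\,\partial_\nu u$ on $\Gamma$ controls the full Taylor expansion used in the shrinking-ball argument, following the standard approach for stationary isothermic surfaces as in \cite{MS2007}.

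The main obstacle I anticipate is bookkeeping the geometric dependence of the boundary terms: the quantities $\int_{\partial B_r(x)} \partial_\nu h\,dS$ and the iterated integrals that arise from repeatedly applying Green's identity must be shown to depend only on $r$ (and $N$), not on the location $x$ on the curved surface $\Gamma$, and one must be careful that the outward normal $\nu(x)$ to $\Gamma$ at $x$ — a fixed vector in the integrand — does not secretly reintroduce $x$-dependence beyond what is controlled. This is precisely the content of the classical observation that translating the ball $B_r(x)$ and the frozen vector $\nu(x)$ together makes the kernel $(y-x)\cdot\nu(x)$ depend only on $r$; the delicate point is coupling this with the PDE so that the time-dependence packages cleanly into the single function $c(r,t)$. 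Once the $r$-derivative identity is set up, the equivalence with \eqref{constant flow surface partially} is immediate: the constant flow property is exactly the statement that the leading boundary term is $x$-independent, and integrating back in $r$ (using $c(0^+, t) = 0$) reconstructs \eqref{balance law}.
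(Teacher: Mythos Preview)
Your sketch has the right ingredients, but the central implication is not established. The Green's identity you propose yields
\[
-\sigma_s^{-1}\,\partial_t I(x,r,t) \;=\; \tfrac{1}{r}\,\partial_r I(x,r,t) \;-\; \int_{\partial B_r(x)} (y-x)\cdot\nu(x)\,\partial_n u(y,t)\,dS_y,
\]
and the last term involves the radial derivative of $u$ on the \emph{whole} sphere $\partial B_r(x)$, not only at $x$; nothing you wrote explains why it is $x$-independent. The phrase ``localizes, in the limit as the relevant sphere shrinks'' points in the wrong direction: \eqref{balance law} must hold for \emph{every} $r\in(0,r_0)$, and knowing only the leading small-$r$ term of $I$ does not let you ``integrate back in $r$'' to recover it for all $r$. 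What actually closes the argument is the classical balance law of \cite{MSannals2002, MSmathz1999} applied to a caloric function: the heat equation upgrades ``$\partial_{\nu(x)} u(x,t)$ is $x$-independent for all $t$'' to ``$\Delta^k(\partial_{\nu(x)} u)(x,t)=\sigma_s^{-k}\,d^{(k)}(t)/\sigma_s$ is $x$-independent for all $k$'', and that is what pins down ball averages at every radius. Your proposal gestures at analyticity but never isolates this iteration; in particular, knowing $\sigma_s\,\partial_\nu u$ on $\Gamma$ does \emph{not} ``control the full Taylor expansion'' of $u$.

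The paper's argument avoids this bookkeeping entirely. For two points $p,q\in\Gamma$ it chooses an orthogonal matrix $A$ with $A\nu(p)=\nu(q)$ and sets $v(z,t)=u(z+p,t)-u(Az+q,t)$ on $B_{r_0}(0)$. Then $v$ is caloric with conductivity $\sigma_s$, and \eqref{constant flow surface partially} gives $\nabla v(0,t)\cdot\nu(p)=0$ for all $t$. The balance law of \cite{MSannals2002, MSmathz1999}, in its zero-at-the-origin form, applies directly to the caloric function $\nabla v\cdot\nu(p)$ and yields $\int_{\partial B_r(0)}\nabla v\cdot\nu(p)\,dS=0$; integrating in $r$, applying the divergence theorem, and undoing the change of variables gives $I(p,r,t)=I(q,r,t)$ immediately. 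The rotation trick is precisely what eliminates the frozen normal $\nu(x)$ from the problem and reduces to a clean vanishing statement, so no expansion in $r$ is needed.
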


\begin{proof}
Since $\Gamma$ is compact, let $p\in\Ga$ be a point such that $\dist(p,\pa\Om)=r_0$.
If \eqref{constant flow surface partially} holds, we have that
\begin{equation}
\label{constant flow property 1}
d(t) = \sigma_s\nabla u(p, t)\cdot\nu(p) = \sigma_s\nabla u(q, t)\cdot\nu(q)\ \mbox{ for every } (q,t) \in \Gamma \times (0,+\infty).
\end{equation}
\par
Next, fix a $q\in \Gamma$ and let $A$ be an orthogonal matrix satisfying
\begin{equation}
\label{by rotation}
A\nu(p) = \nu(q).
\end{equation}
From  \eqref{constant flow property 1} and  \eqref{by rotation}
we obtain that  the function $v = v(x,t)$, defined by
$$
v(x,t) = u(x+p, t) - u(Ax + q, t)\ \mbox{ for } (x,t )\in B_{r_0}(0) \times (0,+\infty),
$$
is such that
\begin{multline*}
\nabla v(0,t)\cdot\nu(p) = \nabla u(p,t)\cdot\nu(p)- [A^{T}\nabla u(q,t)]\cdot\nu(p) = \\
\nabla u(p,t)\cdot\nu(p)- \nabla u(q,t)\cdot[A\,\nu(p)]=
\nabla u(p,t)\cdot\nu(p)- \nabla u(q,t)\cdot\nu(q)=0,
\end{multline*}
for every $t > 0$. Here, the superscript $T$ stands for transpose.
\par
Now, since assumption \eqref{near the boundary} guarantees that $B_{r_0}(p)$ and $B_{r_0}(q)\subset\Omega\setminus\overline{D}$, and  $\sigma = \sigma_s$ in $\Omega\setminus\overline{D}$, we have that $v$ satisfies the heat equation with constant conductivity $\sigma_s$:
\begin{equation*}
v_t =\sigma_s \Delta v\ \mbox{ in }\ B_{r_0}(0)\times (0,+\infty).
\end{equation*}
Thus, also the function $\nabla v(x,t)\cdot\nu(p)$ satisfies the same equation and we have seen that 
$\nabla v(0,t)\cdot\nu(p) =0$ for every $t>0$. Hence, 
we can use a balance law (see \cite[Theorem 2.1, pp. 934--935]{MSannals2002} or \cite[Theorem 4, p. 704]{MSmathz1999}) to obtain that
\begin{equation*}
\label{balance law-v}
\int\limits_{\partial B_r(0)}\!\!\nabla v(y,t)\cdot\nu(p)\, dS_y = 0\ \mbox{ for every }\  (r,t) \in (0, r_0)\times(0,+\infty)
\end{equation*}
or, by integrating this in $r$, that
$$
\int\limits_{B_r(0)}\!\!\nabla v(y,t)\cdot\nu(p)\,dy = 0\ \mbox{ for every }\  (r,t) \in (0, r_0)\times(0,+\infty).
$$
By the divergence theorem and again integrating in $r$, we then get
$$
\int\limits_{B_{r}(0)}\!\!v(y,t)\,y\cdot\nu(p)\, dy = 0\ \mbox{ for every }\  (r,t) \in (0, r_0)\times(0,+\infty),
$$
that is 
\begin{eqnarray}
&&\int\limits_{B_{r}(p)}\!\!u(y,t)(y-p)\cdot\nu(p)\, dy = \int\limits_{B_{r}(q)}\!\!u(y,t)(y-q)\cdot\nu(q)\, dy \label{balance law special}
\\ 
&&\qquad\qquad\mbox{ for every }\  (q,r,t) \in \Gamma\times(0,r_0)\times(0,+\infty)\nonumber.
\end{eqnarray}
Therefore, \eqref{balance law} ensues. 
\par
It is not difficult to show that \eqref{balance law} implies \eqref{constant flow surface partially}.
\end{proof}

\medskip

The following lemma is decisive to prove Theorem \ref{th:constant flow}. Among other things, it states that, as in the case of stationary isothermic surfaces, also surfaces having the constant flow property are parallel to a connected component of  $\pa\Om$.

\begin{lemma} 
\label{le: constant weingarten curvature}
Let $u$ be the solution of either problem \eqref{heat equation initial-boundary}--\eqref{heat initial} or \eqref{heat Cauchy},  and let
$\Gamma$ be a connected component of class $C^2$ of $\partial G$ satisfying \eqref{nearest component}. 
Under the assumption \eqref{constant flow surface partially} of {\rm Theorem \ref{th:constant flow}}, the following assertions hold:
\begin{enumerate}[\rm (1)]
\item there exists a number $r_0 > 0$ such that 
$$
\mbox{\rm dist}(x, \partial\Omega) = r_0\ \mbox{ for every } x \in \Gamma;
$$
\item $\Gamma$ is a real analytic hypersurface;
\item there exists a connected component $\gamma$ of $\partial\Omega$, that is also a real analytic hypersurface, such that the mapping $\gamma \ni y \mapsto x(y)  \equiv y-r_0\,\nu(y) \in \Gamma$ is a diffeomorphism; in particular $\gamma$ and $\Gamma$  are parallel hypersurfaces at distance $r_0$;
\item it holds that
\begin{equation*}
\label{bounds of curvatures}
 \kappa_j(y) < \frac 1{r_0}\ \mbox{ for every } y \in \gamma \mbox{ and } \ j=1,\dots, N-1;
\end{equation*}
\item there exists a number $c_0 > 0$ such that $\Pi_{\pa\Om}(r_0, y)= c_0$ for every $y\in\gamma$,
where $\Pi_{\pa\Om}$ is given in \eqref{product-curvatures}.
\end{enumerate}
\end{lemma}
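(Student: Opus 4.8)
The plan is to adapt the method used for stationary isothermic surfaces: the driving tool is the balance law of Lemma~\ref{le: balance law}, to be combined with two short-time facts about $u$ — its exponential (in $1/t$) smallness away from $\partial\Omega$ (Lemma~\ref{le:initial behavior and decay at infinity}(2)) and the precise $t^{(N+1)/4}$ behaviour, with the constant of Proposition~\ref{prop:heat content asymptotics}, of its heat content in a ball tangent to $\partial\Omega$. I begin with item (1). Set $r_0=\dist(\Gamma,\partial\Omega)$; compactness of $\Gamma$ gives $p\in\Gamma$ with $\dist(p,\partial\Omega)=r_0$, and by \eqref{near the boundary} the open ball $B_{r_0}(p)$ lies in $\Omega\setminus\overline{D}$, while (using \eqref{near the boundary} and \eqref{nearest component} to exclude the ``inner'' contact) $\overline{B_{r_0}(p)}$ meets $\partial\Omega$ at the single point $y_0=p+r_0\,\nu(p)$. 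Since for $r<r_0$ the identity \eqref{balance law} gives $\int_{B_r(p)}u(y,t)\,(y-p)\cdot\nu(p)\,dy=\int_{B_r(q)}u(y,t)\,(y-q)\cdot\nu(q)\,dy$ for all $q\in\Gamma$, and both sides are continuous in $r$, letting $r\uparrow r_0$ yields
\[
\int_{B_{r_0}(p)}u(y,t)\,(y-p)\cdot\nu(p)\,dy=\int_{B_{r_0}(q)}u(y,t)\,(y-q)\cdot\nu(q)\,dy\qquad\text{for all }q\in\Gamma,\ t>0 .
\]
Near $y_0$ the weight $(y-p)\cdot\nu(p)$ is close to $r_0>0$, so by Proposition~\ref{prop:heat content asymptotics} and the exponential smallness of $u$ on the rest of $B_{r_0}(p)$ the left-hand side is bounded below by a positive multiple of $t^{(N+1)/4}$ for small $t$; on the other hand, if some $q\in\Gamma$ had $\dist(q,\partial\Omega)>r_0$, then $\overline{B_{r_0}(q)}$ would stay at positive distance from $\partial\Omega$ and, by Lemma~\ref{le:initial behavior and decay at infinity}(2), the right-hand side would decay exponentially as $t\to+0$. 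This contradiction proves $\dist(\cdot,\partial\Omega)\equiv r_0$ on $\Gamma$.

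For items (4) and (5) one first checks, again by means of \eqref{near the boundary}--\eqref{nearest component} (as in \cite{Strieste2016}), that for every $x\in\Gamma$ the set $\overline{B_{r_0}(x)}\cap\partial\Omega$ reduces to the single point $y(x):=x+r_0\,\nu(x)$ and $B_{r_0}(x)\subset\Omega\setminus\overline{D}$, so that $x\mapsto y(x)$ is a continuous map from $\Gamma$ into $\partial\Omega$; write $\gamma:=y(\Gamma)$. Repeating the computation above, since $(y-x)\cdot\nu(x)\to r_0$ at $y(x)$ while $u$ is exponentially small on the remainder of $B_{r_0}(x)$, Proposition~\ref{prop:heat content asymptotics} yields
\[
\lim_{t\to+0}t^{-\frac{N+1}{4}}\int_{B_{r_0}(x)}u(y,t)\,(y-x)\cdot\nu(x)\,dy=\frac{r_0\,C(N,\sigma)}{\sqrt{\Pi_{\partial\Omega}(r_0,y(x))}}\in(0,+\infty].
\]
But the common value in \eqref{balance law}, extended to $r=r_0$, does not depend on $x$, hence this limit is a single number $\ell$ and $\Pi_{\partial\Omega}(r_0,y(x))$ is the same constant for all $x\in\Gamma$. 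The value $0$ (that is, $\ell=+\infty$) is impossible: then the parallel map $\Phi_{r_0}\colon y\mapsto y-r_0\,\nu(y)$ would have identically vanishing Jacobian $r_0^{N-1}\Pi_{\partial\Omega}(r_0,\cdot)$ on $\gamma$, so by the area formula its image would have zero $(N-1)$-dimensional Hausdorff measure; but that image contains $\Gamma$, which has positive $(N-1)$-dimensional measure — a contradiction. Therefore $\Pi_{\partial\Omega}(r_0,y(x))\equiv c_0:=\bigl(r_0\,C(N,\sigma)/\ell\bigr)^2>0$ on $\gamma$, which is (5); and since the interior ball $B_{r_0}(x)\subset\Omega$ forces $\kappa_j(y(x))\le 1/r_0$ while $\prod_j\bigl(1/r_0-\kappa_j(y(x))\bigr)=c_0>0$, every factor must be strictly positive, which is (4).

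For items (2) and (3): since $\Phi_{r_0}\circ y=\mathrm{id}_\Gamma$, the map $y$ is a $C^1$ embedding of the compact connected $\Gamma$ into $\partial\Omega$, so $\gamma=y(\Gamma)$ is open and closed in $\partial\Omega$, hence a full connected component, and $\Phi_{r_0}=\bigl(y|_\Gamma\bigr)^{-1}\colon\gamma\to\Gamma$ is a diffeomorphism, $\gamma$ and $\Gamma$ being parallel hypersurfaces at distance $r_0$. Finally, $\gamma$ is a priori of class $C^2$ (being part of $\partial\Omega$) and, by (5), a solution of the fully nonlinear curvature equation $\prod_j(1/r_0-\kappa_j)=c_0$; on the range $\kappa_j<1/r_0$ obtained in (4) (uniformly on the compact set $\gamma$) this equation is elliptic and analytic in its arguments, so Schauder bootstrapping upgrades $\gamma$ to $C^\infty$ and analytic hypoellipticity then shows $\gamma$ is real analytic; consequently $\Gamma=\Phi_{r_0}(\gamma)$, the image of a real-analytic hypersurface under the real-analytic map $\Phi_{r_0}$, is real analytic as well.

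The delicate part I anticipate is contained in the first two paragraphs: one has to apply the balance law exactly at the critical radius $r=r_0$ and reconcile the two incompatible short-time regimes — polynomial $t^{(N+1)/4}$ growth near $\partial\Omega$ against exponential decay elsewhere — and, in particular, rule out the degenerate possibility $\Pi_{\partial\Omega}(r_0,\cdot)\equiv 0$; care is also needed for the underlying geometric facts (uniqueness of the contact point, orientation of the normal). By comparison, the identification of $\gamma$ as a connected component and the regularity bootstrap in the last paragraph are routine.
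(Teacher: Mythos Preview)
Your argument is correct and runs along the same lines as the paper's: the balance law plus the short-time dichotomy between Proposition~\ref{prop:heat content asymptotics} and Lemma~\ref{le:initial behavior and decay at infinity}(2) gives (1), the balance law at $r=r_0$ then makes $\Pi_{\partial\Omega}(r_0,\cdot)$ constant on $\gamma$, and the ellipticity of the resulting Monge--Amp\`ere-type equation yields analyticity.

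The one place where you deviate is in establishing the \emph{strict} bound (4). The paper does not need your Jacobian/area-formula argument to exclude $\Pi\equiv 0$; instead it uses a three-balls construction. Since after (1) every $x\in\Gamma$ realises $\dist(\overline G,\partial\Omega)=r_0$, one can take an interior ball $B_\delta(z)\subset G$ tangent to $\Gamma$ at $x$; then $B_{r_0+\delta}(z)\subset\Omega$, which simultaneously forces the uniqueness $\overline{B_{r_0}(x)}\cap\partial\Omega=\{y(x)\}$ and the strict inequality $\kappa_j(y(x))\le 1/(r_0+\delta)<1/r_0$. So the paper obtains (4) first, and (5) follows with a finite positive constant automatically. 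Your Hausdorff-measure argument is a valid alternative, but note that the very uniqueness of $y(x)$ that you need in order to apply Proposition~\ref{prop:heat content asymptotics} precisely (and which you defer to \cite{Strieste2016}) is proved there by exactly this three-balls construction---which already hands you (4) and renders the area-formula step superfluous.
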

 
\begin{proof}
We just have to prove assertion (1): the remaining ones then will easily follow. 
\par
Let $r_0>0$ be the minimum of $\dist(x,\pa\Om)$ for $x\in\Ga$ and suppose it is achieved at $p$; assume that there exists a point $q_* \in \Gamma$ such that
$$
r_0 < \dist(q_*,\partial\Omega).
$$
Since $\overline{B_{r_0}(q_*)} \subset \Omega$, with the aid of  Lemma \ref{le:initial behavior and decay at infinity}  we have:
\begin{equation}
\label{decay to zero at q*}
\lim_{t \to +0} t^{-\frac {N+1}4}\!\!\!\int\limits_{B_{r_0}(q_*)}\!\!u(x,t)\,(x-q_*)\cdot\nu(q_*) \, dx=0.
\end{equation}
In view of \eqref{nearest component}, since $r_0 = \mbox{ dist}(p,\partial\Omega) = \mbox{ dist}(\partial G,\partial\Omega) = \mbox{ dist}(\overline{G},\partial\Omega)$ and $\Gamma$ is of class $C^2$, we can find a ball
$B_\delta(z) \subset G$ satisfying
$$
\overline{B_\delta(z)}\cap\partial G = \{p\}\ \mbox{ and }\ B_{\delta+r_0}(z) \subset \Omega.
$$
Also, by setting $\hat{p} = p+r_0\nu(p)\ (\in \partial\Omega)$ we have:
\begin{equation}
\label{curvature estimates at p-hat}
\overline{B_{r_0}(p)}\cap\partial\Omega = \{\hat{p}\}\ \mbox{ and }\ \kappa_j(\hat{p}) \le \frac 1{r_0+\delta} < \frac 1{r_0}\ \mbox{ for } j=1, \dots, N-1.
\end{equation}
\begin{figure}[h]
\centering
\includegraphics[width=0.55\textwidth]{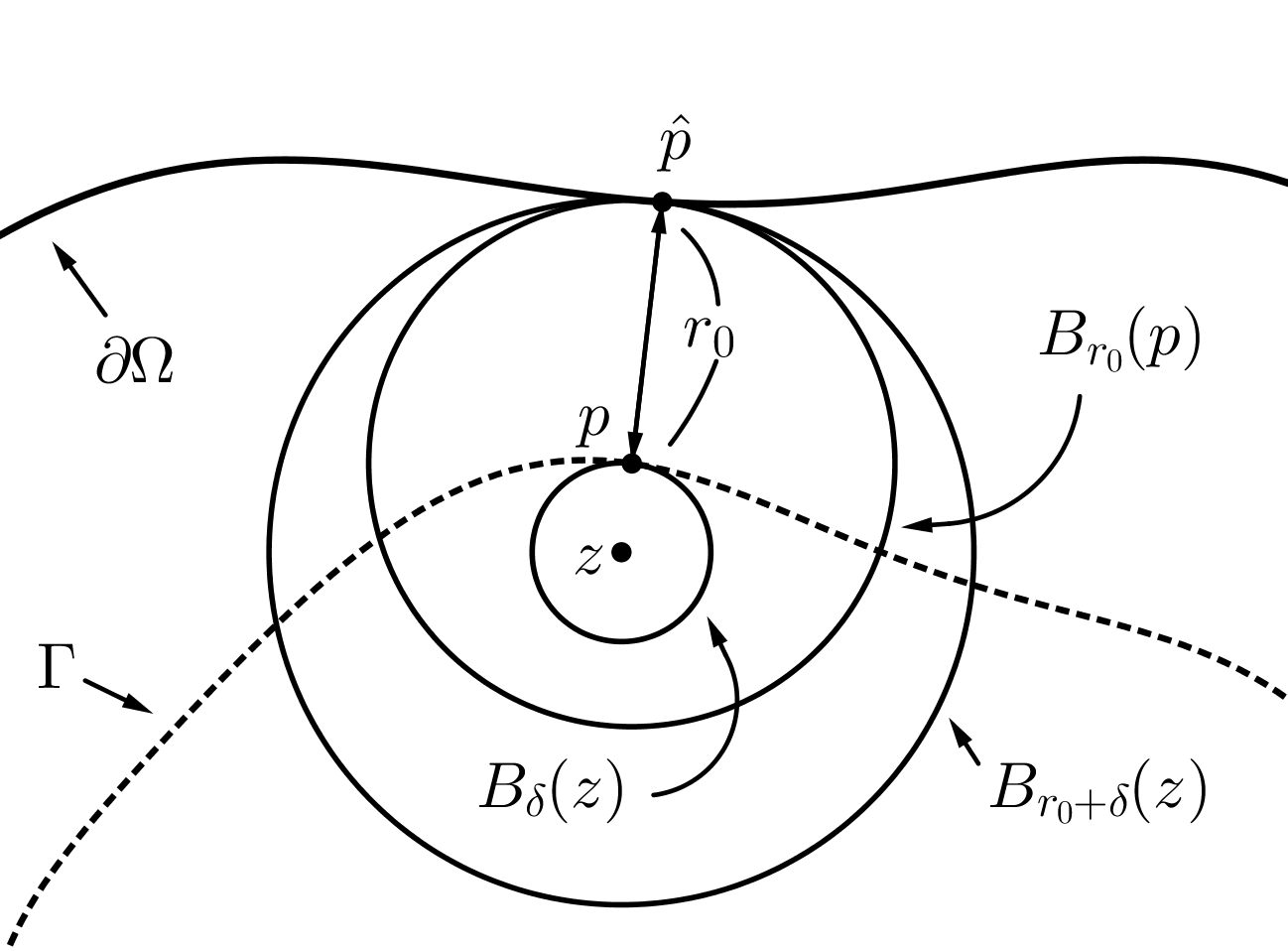}
\caption{The three-balls construction.}
\label{picture1}
\end{figure}

\vskip 1ex
\noindent
Thus, Proposition \ref{prop:heat content asymptotics} gives that
\begin{equation}
\label{asymptotics and curvatures at p-hat}
\lim_{t\to +0}t^{-\frac{N+1}4 }\!\!\!\int\limits_{B_{r_0}(p)}\!\! u(x,t)\ dx=
\frac{C(N, \sigma)}{\sqrt{\Pi_{\pa\Om}(r_0, \hat{p})}} .
\end{equation}
On the other hand, by Lemma \ref{le:initial behavior and decay at infinity}  and the fact that
$\overline{B_{r_0}(p)}\cap\partial\Omega = \{\hat{p}\}$, we have
\begin{equation}
\label{decay to zero outside the neighborhood of p-hat}
\lim_{t \to +0} t^{-\frac {N+1}4}\!\!\!\!\!\!\!\!\!\!\int\limits_{B_{r_0}(p)\setminus B_\varepsilon(\hat{p})}\!\!\!\!\!\! u(x,t)\ dx = 0\ \mbox{ for every } \varepsilon >0.
\end{equation}
Therefore, combining the last two formulas yields that
\begin{equation}
\label{relationship between the limit and principal curvatures}
\lim_{t \to +0} t^{-\frac {N+1}4}\!\!\!\int\limits_{B_{r_0}(p)}\!\!u(x,t)(x-p)\cdot\nu(p)\,dx = r_0\,\frac{C(N, \sigma)}{\sqrt{\Pi_{\pa\Om}(r_0, \hat{p})}}.
\end{equation}
In fact,  for every $\varepsilon > 0$, we have
\begin{eqnarray}
&&\left|  t^{-\frac {N+1}4}\!\!\!\!\!\!\!\!\!\int\limits_{B_{r_0}(p)\cap B_\varepsilon(\hat{p})}\!\!\!\!\!\!\!\!u(x,t)(x-\hat{p})\cdot\nu(p)\,dx\right| 
\le \varepsilon t^{-\frac {N+1}4}\!\!\!\int\limits_{B_{r_0}(p)}\!\!u(x,t)\,dx, \label{small near the touching point}
\\
&&\left|  t^{-\frac {N+1}4}\!\!\!\!\!\!\!\!\!\int\limits_{B_{r_0}(p)\setminus B_\varepsilon(\hat{p})}\!\!\!\!\!\!\!\!u(x,t)(x-\hat{p})\cdot\nu(p)\,dx\right| 
\le 2r_0 t^{-\frac {N+1}4}\!\!\!\!\!\!\!\!\!\int\limits_{B_{r_0}(p)\setminus B_\varepsilon(\hat{p})}\!\!\!\!\!\!\!\!u(x,t)\,dx.\label{small far from the touching point}
\end{eqnarray}
Moreover, since $(\hat{p}-p)\cdot\nu(p)=r_0$, we have that 
\begin{multline*}
t^{-\frac {N+1}4}\!\!\!\!\!\!\int\limits_{B_{r_0}(p)}\!\!u(x,t)(x-p)\cdot\nu(p)\,dx= 
r_0\, t^{-\frac {N+1}4}\!\!\!\!\!\!\int\limits_{B_{r_0}(p)}\!\!u(x,t)\,dx+  \\
 t^{-\frac {N+1}4}\!\!\!\!\!\!\!\!\!\!\!\!\int\limits_{B_{r_0}(p)\setminus B_\varepsilon(\hat{p})}\!\!\!\!\!\!\!\!u(x,t)(x-\hat{p})\cdot\nu(p)\,dx +
  t^{-\frac {N+1}4}\!\!\!\!\!\!\!\!\!\!\!\!\int\limits_{B_{r_0}(p)\cap B_\varepsilon(\hat{p})}\!\!\!\!\!\!\!\!u(x,t)(x-\hat{p})\cdot\nu(p)\,dx,
 \end{multline*}
 for every $t > 0$.  Therefore, combining \eqref{asymptotics and curvatures at p-hat},  \eqref{decay to zero outside the neighborhood of p-hat}, \eqref{small near the touching point} and \eqref{small far from the touching point} yields that 
  \begin{eqnarray*}
 &&(r_0 -\varepsilon)\,\frac{C(N, \sigma)}{\sqrt{\Pi_{\pa\Om}(r_0, \hat{p})}} \le \liminf_{t \to +0}\  t^{-\frac {N+1}4}\!\!\!\int\limits_{B_{r_0}(p)}\!\!u(x,t)(x-p)\cdot\nu(p)\,dx\\
  && \le \limsup_{t \to +0}\  t^{-\frac {N+1}4}\!\!\!\int\limits_{B_{r_0}(p)}\!\!u(x,t)(x-p)\cdot\nu(p)\,dx \le (r_0 +\varepsilon)\,\frac{C(N, \sigma)}{\sqrt{\Pi_{\pa\Om}(r_0, \hat{p})}}
 \end{eqnarray*}
 for every $\varepsilon > 0$,  which gives \eqref{relationship between the limit and principal curvatures}.
\par
It is clear that \eqref{relationship between the limit and principal curvatures} contradicts \eqref{decay to zero at q*} and the balance law \eqref{balance law}, and hence assertion (1) holds true.
\par
Now, once we have (1), we can apply the same argument as above to any other point in $\Gamma$. Thus, we know from \eqref{balance law}, \eqref{curvature estimates at p-hat} and \eqref{relationship between the limit and principal curvatures} that there exists a connected component $\gamma$ of $\partial\Omega$ satisfying
(3), (4) and (5). The analyticity of $\gamma$ follows from (4) and (5). Indeed, by using local coordinates, the condition (5) with (4) can be converted into a second order analytic nonlinear elliptic equation of Monge-Amp\`ere type, where (4) guarantees  the ellipticity as is noted in \cite[p. 945]{MSannals2002}.  Hence (2) is implied by (3) together with (4). 
\end{proof}


\subsection{Proof of Theorem \ref{th:constant flow} for problem \eqref{heat equation initial-boundary}--\eqref{heat initial}}
\label{subsec:initial-boundary}
Let $u$ be the solution of  problem \eqref{heat equation initial-boundary}--\eqref{heat initial}. 
By virtue of (1) of Lemma \ref{le:initial behavior and decay at infinity},  we can define the function $v:\ol{\Om}\to\RR$ by the Laplace transform of $1-u(x, \cdot)$ computed at the complex parameter $1 + 0 \sqrt{-1}$
\begin{equation}
\label{def-v}
v(x)=\int_0^\infty e^{-t} [1-u(x,t)]\,dt \quad \mbox{ for } x \in \overline{\Omega},
\end{equation}
and set $U=v$ on $\overline{\Om}\setminus D$ and $V=v$ on $\overline{D}$. 
Then, it is easy to show that
\begin{eqnarray}
&&0< U < 1\mbox{ in } \Omega \setminus\overline{D},\quad 0< V < 1 \mbox{ in } D,\label{all bounded from above and below 1}
\\
&&\sigma_s\,\Delta U =U - 1  \mbox{ in } \Omega \setminus\overline{D},\quad \sigma_c\,\Delta V =V- 1 \mbox{ in } D, \label{poisson and Laplace equations 1}
\\
&&U = V\ \mbox{ and }\  \sigma_s\,\pa_\nu U= \sigma_c\, \pa_\nu V  \ \mbox{ on } \partial D, \label{transmission condition between U and V 1}
\\
&&U = 0\  \mbox{ on } \partial\Omega. \label{homogenious Dirichlet condition 1}
\end{eqnarray}
Here, $\nu$ denotes the outward unit normal vector to $\partial D$ at points of $\partial D$. The two equations in \eqref{transmission condition between U and V 1} follow from the transmission condition satisfied by $u$ on $\partial D \times (0,+\infty)$ and involve the continuous extensions of the relevant functions up to $\pa D$.
\par
Next, let $\ga$ be the connected component of $\pa\Om$ whose existence is guaranteed by Lemma \ref{le: constant weingarten curvature}. Claims (5) and (4) of Lemma \ref{le: constant weingarten curvature} also tell us that $\ga$ is an elliptic Weingarten-type surface, that is its principal curvatures satisfy a symmetric constraint which can be recast as an elliptic partial differential equation,  considered by Aleksandrov's sphere theorem \cite[p. 412]{Alek1958vestnik}, and hence $\gamma$ is a sphere. Consequently, $\Ga$ is a sphere concentric with $\ga$;
we can always assume that the origin is their common center. 
\par
By combining the initial and boundary conditions of problem \eqref{heat equation initial-boundary}--\eqref{heat initial} and  the assumption \eqref{constant flow surface partially} with the real analyticity in $x$ of $u$ over $\Omega\setminus\overline{D}$, we see that $u$ is radially symmetric in $x$ on $\overline{\Omega}\setminus D$ for every $t>0$. Here, we used the fact that $\Omega\setminus\overline{D}$ is connected. Moreover, in view of \eqref{heat Dirichlet}, we can distinguish two cases:
$$
\mbox{\rm (I)  } \Omega \mbox{ is a ball;}\qquad \mbox{\rm (II)  } \Omega \mbox{ is a spherical shell.}
$$

\vskip 2ex

 We first show that case (II) never occurs.  Suppose that  
$
\Omega = B_{\rho_+} \setminus \overline{B_{\rho_-}} 
$
where $B_{\rho_+}$ and  $B_{\rho_-}$ are two balls centered at the origin with $\rho_+ > \rho_- > 0$.
By the radial symmetry of $u$ on $\Omega\setminus\overline{D}$ for every $t>0$, being $\Omega \setminus \overline{D}$ connected, there exists a function $\widetilde{U}:[\rho_-,\rho_+]\to\RR$  such that $U(x) = \widetilde{U}(|x|)$ for $x \in \overline{\Omega}\setminus D$. Moreover,
 by \eqref{poisson and Laplace equations 1}, $\widetilde{U}$ is extended as a solution of 
 \begin{equation*}
 \label{ODE for U}
 \sigma_s\left(\pa_{rr}\widetilde{U} + \frac {N-1}r\pa_r\widetilde{U}\right)= \widetilde{U}-1\ \mbox{ for all } r > 0,
 \end{equation*}
  where $\pa_r$ and $\pa_{rr}$ stand for first and second derivatives with respect to the variable $r=|x|$.
That means that $U$ is extended as a radially symmetric solution of $\sigma_s\Delta U =U - 1$  in 
$\mathbb R^N\setminus\{0\}$.
 By applying Hopf's boundary point lemma (see \cite[Lemma 3.4, p. 34]{GT1983}) to $U$, we obtain from 
 \eqref{all bounded from above and below 1},  \eqref{poisson and Laplace equations 1}  and \eqref{homogenious Dirichlet condition 1}
that 
\begin{eqnarray}
&& \sigma_s\Delta U= U-1 < 0 \  \mbox{ in } \Omega, \label{extended ODE for U 1}
\\
&& \pa_\nu U=-\pa_r\widetilde{U}(\rho_-) <0 \ \mbox{ on } \pa B_{\rho_-} \ \mbox{ and }  \ \pa_\nu U=\pa_r\widetilde{U}(\rho_+)  < 0 \ \mbox{ on } \pa B_{\rho_+}. \label{sign of derivatives of U on the boundary of Omega 1}
\end{eqnarray}
\par
Now, we use Lemma \ref{le: the unique determination}.
We set $D_1=\varnothing$, $D_2 = D$, and consider two functions $v_j\in H^1(\Omega)\ (j=1,2)$ defined by 
\begin{equation*}
\label{two functions in Lemma 2.5}
v_1 = U \ \mbox{ and } v_2 = \left\{\begin{array}{rll}
 U  \ &\mbox{ in }\ \Omega \setminus D,
\\
 V \ &\mbox{ in }\  D.
\end{array}\right.
\end{equation*}
In view of \eqref{poisson and Laplace equations 1},  \eqref{transmission condition between U and V 1}, \eqref{homogenious Dirichlet condition 1}, \eqref{extended ODE for U 1} and \eqref{sign of derivatives of U on the boundary of Omega 1}, Lemma \ref{le: the unique determination} gives that $v_1=v_2$ in $\Omega$  and $\varnothing=D$, which is a contradiction. 
Thus, case (II) never occurs. 

\vskip 2ex
It remains to consider case (I), that is we assume that $\Omega$ is a ball $B_R$ centered at the origin
for some radius $R > 0$. 
\par
Since $u$ is radially symmetric on $\overline{\Omega}\setminus D$ for every $t>0$ and $\Omega \setminus \overline{D}$ is connected, by applying Hopf's boundary point lemma to the radially symmetric function $U$, we obtain from 
 \eqref{all bounded from above and below 1},  \eqref{poisson and Laplace equations 1}  and \eqref{homogenious Dirichlet condition 1}
that 
\begin{equation}
\label{sign of derivatives of U on the boundary of Omega 2}
 \sigma_s\,\pa_\nu  U= \sigma_s\, \pa_r\widetilde{U}(R)< 0 \ \mbox{ on } \partial B_R.
 \end{equation}
 \par
Thus, in view of \eqref{all bounded from above and below 1},  \eqref{poisson and Laplace equations 1}  and \eqref{homogenious Dirichlet condition 1}, we see that the function $v$ defined in \eqref{def-v} satisfies
$$
\mbox{\rm div}(\sigma\nabla v) = v -1 < 0\ \mbox{ in } B_R\ \mbox{ and }\  v = 0 \ \mbox{ on } \partial B_R.
$$
Therefore, with the aid of \eqref{sign of derivatives of U on the boundary of Omega 2}, we can apply Theorem \ref{th:constant Neumann boundary condition} to $v$ to see that $D$ must be a ball centered at the origin.


\subsection{Proof of Theorem \ref{th:constant flow} for problem  \eqref{heat Cauchy}}
\label{subsection 3.3}

Let $u$ be the solution of  problem  \eqref{heat Cauchy}. We proceed similarly to Subsection \ref{subsec:initial-boundary}. This time, by virtue of (1) of Lemma \ref{le:initial behavior and decay at infinity}, we define a function $v:\RR^N\to\RR$ by
\begin{equation}
\label{def-v-cauchy}
v(x)=\int_0^\infty e^{-t} [1-u(x,t)]\,dt \quad \mbox{ for every } \ x\in\RR^N
\end{equation} 
and, in addition to the already defined functions $U$ and $V$, we set $W=v$ on $\RR^N\setminus\ol{\Om}$.
\par
While $U$ and $V$ satisfy \eqref{all bounded from above and below 1}-\eqref{transmission condition between U and V 1}, $W$ satisfies
\begin{eqnarray}
&&0<W<1 \ \mbox{ in } \ \RR^N\setminus\ol{\Om},\label{W bounded from above and below} 
\\
&&\si_m\,\De W=W \ \mbox{ in } \ \RR^N\setminus\ol{\Om}, \label{poisson for W}
\\
&&W= U\ \mbox{ and }\ \sigma_m\,\pa_\nu W=\sigma_s\, \pa_\nu U  \ \mbox{ on } \partial\Omega, \label{transmission condition between U and W 2}
\\
&& \lim_{|x| \to \infty} W(x) = 0. \label{decay at infinity}
\end{eqnarray}
Similarly to Subsection \ref{subsec:initial-boundary}, $\nu$ denotes the outward unit normal vector to $\partial D$ or to $\partial\Omega$, and  both \eqref{transmission condition between U and V 1}  and   \eqref{transmission condition between U and W 2} are consequences of the transmission conditions satisfied by $u$ on $\partial D \times (0,+\infty)$ and on $\partial \Omega \times (0,+\infty)$, respectively.  Also,  to obtain \eqref{decay at infinity}, we used Lemma \ref{le:initial behavior and decay at infinity}  together with Lebesgue's dominated convergence theorem.

Again, by Aleksandrov's sphere theorem \cite[p. 412]{Alek1958vestnik}, Lemma \ref{le: constant weingarten curvature} yields that $\gamma$ and $\Gamma$ are concentric spheres, with a common center that we can place at the origin.  Being $\Omega\setminus\overline{D}$ connected, the radial symmetry of $u$ in $x$ on $\overline{\Omega}\setminus D$ for every $t>0$ is obtained similarly, by combining the initial  condition in \eqref{heat Cauchy}  and  the assumption \eqref{constant flow surface partially} with the real analyticity in $x$ of $u$ over $\Omega \setminus\overline{D}$. 
\par
Moreover, in view of the initial  condition of problem  \eqref{heat Cauchy} and Proposition \ref{prop:the initial limits on the interface}, we can prove that $\Omega$ is radially symmetric and hence $u$ is radially symmetric in $x$ on $\mathbb R^N\setminus D$ for every $t>0$. Indeed, if there exists another connected component $\hat{\gamma}$ of $\partial\Omega$, which is not a sphere centered at the origin, 
we can find a number $\rho > 0$ and two points $p \in \partial\Omega, q \in \Omega\setminus\overline{D}$ such that
$$
\partial B_\rho \subset \overline{\Omega},\ p \in \hat{\gamma}\cap\partial B_\rho,\ \mbox{ and } q \in (\Omega\setminus\overline{D})\cap\partial B_\rho,
$$
being $B_\rho$ the ball centered at the origin with radius $\rho$.
\begin{figure}[h]
\centering
\includegraphics[width=0.55\textwidth]{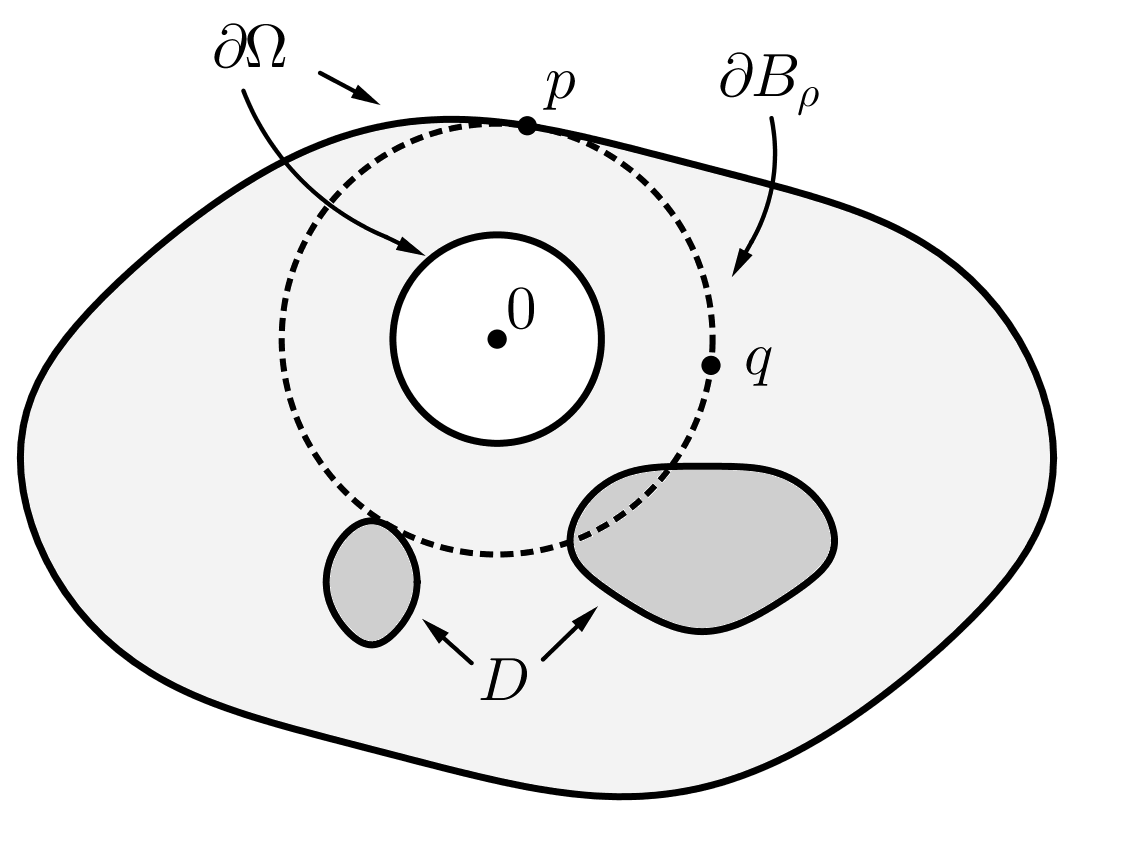}
\caption{The ball construction for the Cauchy problem.}
\label{picture2}
\end{figure}

Then, since $u$ is radially symmetric on $\overline{\Omega}\setminus D$ for every $t>0$, we have:
\begin{equation}
\label{equality from radial symmetry}
u(p,t) = u(q,t)\ \mbox{ for every } t>0.
\end{equation}
On the other hand, by Proposition \ref{prop:the initial limits on the interface} $\lim\limits_{t \to +0}u(p,t) = \frac {\sqrt{\sigma_m}}{\sqrt{\sigma_s}+\sqrt{\sigma_m}}$ and by (2) of Lemma \ref{le:initial behavior and decay at infinity}  $\lim\limits_{t \to +0}u(q,t) = 0$. These contradict \eqref{equality from radial symmetry}. 
Once we know that $\Omega$ is radially symmetric, the radial symmetry of $u$ on $\mathbb R^N\setminus D$ for every $t>0$ follows from the initial  condition in \eqref{heat Cauchy}.

\medskip

Thus, as in the previous case, we can distinguish two cases:
$$
\mbox{\rm (I)  } \Omega \mbox{ is a ball;}\qquad \mbox{\rm (II)  } \Omega \mbox{ is a spherical shell.}
$$

We first show that case (II) never occurs.  With the same notations as in Subsection \ref{subsec:initial-boundary}, we set 
 $\Omega = B_{\rho_+} \setminus \overline{B_{\rho_-}}$.
 Since $u$ is radially symmetric in $x$ on $\mathbb R^N\setminus D$ for every $t>0$, so is $W$ on $\mathbb R^N\setminus D$. 
 Observe from \eqref{W bounded from above and below} and \eqref{poisson for W}  that 
 $$
 \Delta W > 0\   \mbox{ in }  B_{\rho_-}  \mbox{ and } \mathbb R^N\setminus \overline{B_{\rho_+}}.
 $$
Therefore,  in view of \eqref{decay at infinity}, the strong maximum principle tells us that the positive maximum value of $W$ on $\ol{B_{\rho_-}}$ or on $\mathbb R^N \setminus B_{\rho_+}$ is achieved only on $\partial B_{\rho_-}$ or $\partial B_{\rho_+}$, respectively.  Hence,  since $W$ is radially symmetric, Hopf's boundary point lemma yields that
 \begin{equation}
 \label{signs of radial derivatives}
 \pa_\nu W< 0 \mbox{ on  } \pa B_{\rho_-} \mbox{ and }  \partial B_{\rho_+}.
 \end{equation}
  
As in Subsection \ref{subsec:initial-boundary},  $U$ is extended as a radially symmetric solution of $\sigma_s\,\Delta U =U - 1$  in $ \mathbb R^N\setminus\{0\}$.  Then, it follows from \eqref{signs of radial derivatives},  \eqref{all bounded from above and below 1} and \eqref{transmission condition between U and W 2} that both \eqref{extended ODE for U 1} and \eqref{sign of derivatives of U on the boundary of Omega 1} also hold true. Therefore, with the aid of Lemma \ref{le: the unique determination}, by the same argument of the proof in Subsection \ref{subsec:initial-boundary}, we obtain a contradiction, and hence  case (II) never occurs.

\medskip

It remains to consider case (I). As in Subsection \ref{subsec:initial-boundary}, we set 
$
\Omega = B_{R}.
$
Since $u$ is radially symmetric in $x$ on $\mathbb R^N\setminus\overline{D}$ for every $t>0$,  $W$ is also radially symmetric on $\mathbb R^N\setminus\overline{D}$. 
 Observe from \eqref{W bounded from above and below}  and \eqref{poisson for W}  that 
 $$
 \Delta W > 0\   \mbox{ in }  \mathbb R^N\setminus \ol{B_R}.
 $$
Therefore,  in view of \eqref{decay at infinity},  the strong maximum principle informs us that the positive maximum value of $W$ on $\mathbb R^N \setminus B_R$ is achieved  only on $\partial B_{R}$.  Hence,  since $W$ is radially symmetric, Hopf's boundary point lemma yields that
 \begin{equation}
 \label{sign of radial derivative}
 \pa_\nu W < 0 \ \mbox{ on } \ \pa B_R.
 \end{equation}
 Combining  \eqref{sign of radial derivative}  with \eqref{transmission condition between U and W 2} implies that both $U$ and $\pa_\nu U$ are constant on $\pa B_R$.
 Therefore, with the aid of  Theorem \ref{th:constant Neumann boundary condition} and by the same argument of the proof in Subsection \ref{subsec:initial-boundary}, we conclude that $D$ must be a ball centered at the origin.

\setcounter{equation}{0}

\subsection{Proof of Theorem \ref{th:stationary isothermic surface}}
\label{subsection 3.4}

In view of the statements of Theorems \ref{th:stationary isothermic surface}, \ref{th:stationary isothermic} and \ref{th:stationary isothermic cauchy}, it suffices to show that Theorem \ref{th:stationary isothermic cauchy} can be improved as in Theorem \ref{th:stationary isothermic}. Namely, in proposition (b) of Theorem \ref{th:stationary isothermic cauchy} we may show that the assumption that
$\sg_s=\sg_m$ is not necessary.
\par
Let in fact $u$ be the solution of  problem  \eqref{heat Cauchy}.  Aleksandrov's sphere theorem \cite[p. 412]{Alek1958vestnik} and  \cite[Lemma 2.4, p. 176]{Strieste2016} yield that $\gamma$ and $\Gamma$ are concentric spheres. Then,  with the aid of the initial  condition of problem  \eqref{heat Cauchy} and Proposition \ref{prop:the initial limits on the interface}, we can observe that the rest of the proof runs as in the proof given in Subsection \ref{subsection 3.3}.

\setcounter{equation}{0}

\section{The constant flow property at the boundary}
\label{section4}

In this section, we will give the proofs of Theorems \ref{th:constant flow serrin} and \ref{th:stationary isothermic three-phase} .

Let $u$ be the solution of  problem \eqref{heat equation initial-boundary}--\eqref{heat initial}, and let 
$\Gamma$ be a connected component of $\partial\Omega$.
We introduce the distance function $\delta = \delta(x)$ of $x \in \mathbb R^N$  to $\Gamma$ by
\begin{equation}
\label{distance function to the boundary of the domain}
\delta(x) = \mbox{ dist}(x,\Gamma)\ \mbox{ for }\ x \in \mathbb R^N.
\end{equation}
Since $\Gamma$ is of class $C^6$ and compact,  by choosing a number $\delta_0 > 0$ sufficiently small and setting  
\begin{equation}\label{inner tubular neighborhood of Omega}
\mathcal N_0 = \{ x \in \Omega\ :\ 0< \delta(x) < \delta_0 \},
\end{equation}
we see that
\begin{eqnarray}
&& \overline{\mathcal N_0} \cap \overline{D} = \varnothing,\  \delta \in C^6(\overline{\mathcal N_0}), \label{c6 regularity}
\\
&&\mbox{ for every } x \in \overline{\mathcal N_0} \mbox{ there exists a unique }y = y(x) \in\Gamma \mbox{ with } \delta(x) = |x-y|, \label{the nearest point y from x}
\\
&& y(x) = x -\delta(x)\nabla\delta(x)\ \mbox{ for all } x \in \overline{\mathcal N_0}, \label{ the point y and distance from x} 
\\
&&
 \max_{1\le j \le N-1}\kappa_j(y) < \frac 1{2\delta_0}\ \mbox{ for every } y \in \Gamma. \label{upper bound of the curvatures on Gamma}
\end{eqnarray}
The principal curvatures $\ka_j$ of $\Ga$ are taken at $y$ with respect to the inward unit normal vector $-\nu(y)=\nabla\delta(y)$ to $\partial\Omega$.  

\subsection{Introducing a Laplace transform}
\label{subsec:laplace}
Let us define the function $w = w(x, \lambda)$ by the Laplace-Stieltjes transform of $u(x, \cdot)$ or the Laplace transform of $u_t(x,\cdot)$ restricted on the semiaxis of real positive numbers
$$
w(x,\lambda) = \lambda \int_0^\infty e^{-\lambda t}u(x,t)\ dt\ \mbox{ for } (x,\lambda) \in \Omega \times (0,+\infty).
$$
Notice that letting $\lambda = 1$ gives
\begin{equation}
\label{relationship with the auxiliary function in section 3}
w(x,1) =1-v(x)\ \mbox{ for every } x \in \Omega,  \mbox{ and } w(x,1)=1-U(x)  \mbox{ for } x\in\Om\setminus D,
\end{equation}
where $v$ is the function defined by \eqref{def-v} and $U = \restr{v}{\overline{\Omega}\setminus D}$.
\par
Next, we observe that for every $\lambda > 0$
\begin{eqnarray}
& \mbox{ div}(\sigma \nabla w) - \lambda w = 0\ \mbox{ and }\ 0 < w < 1 &\mbox{ in } \Omega, \label{elliptic pde for lambda}
\\
& w = 1 \ &\mbox{ on } \partial\Omega. \label{boundary condition for lambda}
\end{eqnarray}
Hence, by the assumption \eqref{constant flow surface partially},  there exists a function  $d_0: (0,\infty)\to \RR$ satisfying 
\begin{equation}
\label{constant Neumann serrin type}
\sigma_s\,\pa_\nu w(x,\lambda) = d_0(\lambda) \ \mbox{ for every } (x,\lambda) \in \Gamma\times(0,+\infty).
\end{equation}
Moreover, it follows from the first formula of (2) of Lemma \ref{le:initial behavior and decay at infinity}  that there exist two positive constants $\widetilde{B}$ and $\widetilde{b}$ satisfying
\begin{equation}
\label{exponential decay for elliptic eq}
0 < w(x,\lambda) \le \widetilde{B}e^{-\widetilde{b}\sqrt{\lambda}}\ \mbox{ for every } (x,\lambda) \in \left(\partial \mathcal N_0 \cap\Omega\right)\times(0,+\infty).
\end{equation}

\subsection{Two auxiliary functions}\label{4.2}

Since $w$ satisfies \eqref{boundary condition for lambda} and $\Delta w - \frac \lambda{\sigma_s}w=0$ in $\mathcal N_0$,   in view of the formal WKB approximation of $w$ for  sufficiently large $\tau = \frac \lambda{\sigma_s}$
$$
w(x,\lambda) \sim e^{-\sqrt{\tau}\delta(x)} \sum_{j=0}^\infty A_j(x) \tau^{-\frac j2}\ \mbox{ with some coefficients } \{ A_j(x) \}, 
$$ 
we introduce two functions $f_{\pm} = f_{\pm}(x,\lambda)$ defined for $(x,\lambda) \in \overline{\mathcal N_0} \times (0,+\infty)$ by
$$
f_\pm(x,\lambda) = e^{-\frac{\sqrt{\lambda}}{\sqrt{\sigma_s}} \delta(x)}\left[A_0(x) + \frac {\sqrt{\sigma_s}}{\sqrt{\lambda}}A_\pm(x)\right],
$$
where 
\begin{eqnarray*}
&&A_0(x) = \left\{\prod\limits_{j=1}^{N-1}\Bigl[1-\kappa_j(y(x))\delta(x)\Bigr]\right\}^{-\frac 12},
\\
&&A_\pm(x) = \int_0^{\delta(x)}\left[\frac 12\,\Delta A_0(x(\tau)) \pm 1\right]\exp\left(-\frac 12\,\int_\tau^{\delta(x)} \Delta \delta(x(\tau')) d\tau'\right)d\tau,
\end{eqnarray*}
with $x(\tau) = y(x) - \tau\,\nu(y(x))$ for $0<\tau<\de(x)$.  It is shown in \cite[Lemmas 14.16 and 14.17, p. 355]{GT1983} that 
$$
|\nabla \delta(x)| = 1\ \mbox{ and }\ \Delta \delta(x) = - \sum_{j=1}^{N-1}\frac {\kappa_j(y(x))}{1-\kappa_j(y(x)) \delta(x)}.
$$
With these in hand, by straightforward computations we obtain that
\begin{equation}
\label{gradient of two functions}
\nabla\delta\cdot\nabla A_0 = -\frac 12(\Delta\delta)A_0, \quad \nabla\delta\cdot\nabla A_\pm = -\frac 12(\Delta\delta)A_\pm + \frac 12 \Delta A_0 \pm 1 \ \mbox{ in }  \ \overline{\mathcal N_0}, 
\end{equation}
\begin{equation}
\label{for super and subsolutions}
\sigma_s\Delta f_\pm - \lambda f_\pm = \sigma_s e^{-\frac{\sqrt{\lambda}}{\sqrt{\sigma_s}} \delta(x)}\left(\mp 2 + \frac {\sqrt{\sigma_s}}{\sqrt{\lambda}}\Delta A_\pm\right)  \ \mbox{ in }  \ \overline{\mathcal N_0},
\end{equation}
and
\begin{equation}
\label{inner boundary Dirichlet}
A_0 = 1,\ A_\pm = 0,  \quad f_\pm =1  \ \mbox{ on } \ \Gamma, 
\end{equation}
for every $\la>0$.
\par
Since $\Gamma$ is of class $C^6$ and compact, we observe from \eqref{c6 regularity}--\eqref{upper bound of the curvatures on Gamma} that 
$$
|\Delta A_\pm| \le c_0\ \mbox{ in } \overline{\mathcal N_0},
$$
for some positive constant $c_0$. Therefore, it follows from \eqref{for super and subsolutions}, \eqref{exponential decay for elliptic eq} and the definition of $f_\pm$ 
that there exist two positive constants $\lambda_0$ and $\eta$ such that 
\begin{eqnarray}
&&\sigma_s\Delta f_+ - \lambda f_+ < 0 < \sigma_s\Delta f_- - \lambda f_-\ \mbox{ in } \ \overline{\mathcal N_0},\label{key differential inequalities}
\\
&& \max\{ |f_+|, |f_-|, w \} \le e^{-\eta\sqrt{\lambda}}\ \mbox{ on } \ \partial \mathcal N_0 \cap\Omega,\label{key inner boundary decay estimates}
\end{eqnarray}
for every  $\lambda \ge \lambda_0$.

\subsection{Construction of barriers for $w(x,\la)$}\label{4.3}

Let $\psi = \psi(x)$ be the unique solution of the Dirichlet problem:
$$
\Delta \psi = 0\ \mbox{ in } \ \mathcal N_0,\quad \psi = 0\ \mbox{ on } \ \Gamma, \quad \psi(x) = 2\ \mbox{ on } \ \partial \mathcal N_0 \cap\Omega.
$$
For every $(x,\lambda) \in \overline{\mathcal N_0} \times (0,+\infty)$,  we define the two functions $w_{\pm} = w_{\pm}(x,\lambda)$  by
$$
w_\pm(x,\lambda) = f_\pm(x,\lambda) \pm \psi(x)e^{-\eta\sqrt{\lambda}}.
$$
Then, in view of \eqref{elliptic pde for lambda}, \eqref{boundary condition for lambda}, \eqref{inner boundary Dirichlet}, \eqref{key differential inequalities} and \eqref{key inner boundary decay estimates}, we notice that 
\begin{eqnarray}
& \sigma_s\Delta w_+ - \lambda w_+ < 0 = \sigma_s\Delta w - \lambda w < \sigma_s\Delta w_- - \lambda w_-\ &\mbox{ in } \mathcal N_0,\nonumber
\\
& w_+ = w =w_- = 1\ &\mbox{ on } \Gamma,\label{key equality for gradient estimates}
\\
& w_- < w < w_+\ &\mbox{ on } \partial \mathcal N_0 \cap\Omega,\nonumber
\end{eqnarray}
for every  $\lambda \ge \lambda_0$,  and hence we get that
$$
w_- < w < w_+\ \mbox{ in } \ \mathcal N_0,
$$
for every  $\lambda \ge \lambda_0$, by the strong comparison principle.
Hence, combining these inequalities with \eqref{key equality for gradient estimates} and \eqref{constant Neumann serrin type} yields that 
\begin{equation}
\label{key inequalities on Gamma}
\si_s\,\pa_\nu w_+\le d_0(\la)\le \si_s\,\pa_\nu w_- \ \mbox{ on } \ \Gamma,
\end{equation}
for every  $\lambda \ge \lambda_0$.
Thus, by recalling the definition of $w_\pm$, an easy computation with  \eqref{inner boundary Dirichlet} and \eqref{gradient of two functions}  at hand gives that 
\begin{multline}
\label{bounds-for-distance-and-curvatures}
\frac 12\,\Delta\delta - \frac {\sqrt{\sigma_s}}{\sqrt{\lambda}}\left(\frac 12\Delta A_0+1\right) + (\pa_\nu \psi)\,  e^{-\eta\sqrt{\lambda}} \le  \frac{d_0(\lambda)}{\sigma_s}-
\frac {\sqrt{\lambda}}{\sqrt{\sigma_s}} \le
\\
\frac 12\,\Delta\delta - \frac {\sqrt{\sigma_s}}{\sqrt{\lambda}}\left(\frac 12\Delta A_0-1\right) -  (\pa_\nu \psi)\, e^{-\eta\sqrt{\lambda}} \ \mbox{ on } \ \Gamma,
\end{multline}
for every  $\lambda \ge \lambda_0$.

\subsection{Conclusion of the proof of Theorem \ref{th:constant flow serrin}}
By observing that the expression in the middle of the chain of inequalities \eqref{bounds-for-distance-and-curvatures} is independent of the choice of the point $x \in \Gamma$ and both sides of \eqref{bounds-for-distance-and-curvatures} have the common limit $\frac 12 \Delta \delta(x)$ as $\lambda  \to + \infty$,  we see that $\Delta\delta$ must be constant on $\Gamma$. 
Since $\Delta\delta = -\sum\limits_{j=1}^{N-1}\kappa_j$ on $\Gamma$, Aleksandrov's sphere theorem \cite[p. 412]{Alek1958vestnik} implies that
$\Gamma$ must be a sphere. 
\par
Once we know that $\Gamma$ is a sphere, by \eqref{elliptic pde for lambda}, \eqref{boundary condition for lambda} and \eqref{constant Neumann serrin type},  with the aid of the uniqueness of the solution of the Cauchy problem for elliptic equations, we see that $v$ is radially symmetric with respect to the center of $\Gamma$ in $\Omega \setminus \overline{D}$  for every $\lambda>0$, since $\Omega \setminus \overline{D}$ is connected. In particular, \eqref{relationship with the auxiliary function in section 3} yields that the function $U$ defined in Subsection \ref{subsec:initial-boundary} is radially symmetric in $\overline{\Omega} \setminus D$. Therefore, since $U = 0$ on $\partial\Omega$ and $\Omega \setminus \overline{D}$ is connected, the radial symmetry of $U$ implies that $\Omega$ must be either a ball or a spherical shell. The rest of the proof runs as explained in Subsection \ref{subsec:initial-boundary}.


\subsection{Cauchy problem: a stationary isothermic surface at the boundary}
\label{subsection4.5}

The techniques just established help us to carry out the proof of Theorem \ref{th:stationary isothermic three-phase}.
\par
Let $u$ be the solution of  problem  \eqref{heat Cauchy},  and let 
$\Gamma$ be a connected component of $\partial\Omega$.
Similarly to Subsection \ref{subsec:laplace}, we define the function $w = w(x,\lambda)$ by
$$
w(x,\lambda) = \lambda \int_0^\infty e^{-\lambda t}u(x,t)\ dt\ \mbox{ for } (x,\lambda) \in \mathbb R^N \times (0,+\infty).
$$
Item (1) of Lemma \ref{le:initial behavior and decay at infinity} ensures that
$0 < w < 1$ in $\rn\times (0,+\infty)$.

In view of the assumption \eqref{stationary isothermic surface partially}, we  set
$$
\widetilde{a}(\lambda) =  \lambda \int_0^\infty e^{-\lambda t}a(t)\,dt\ \mbox{ for } \la\in (0,+\infty).
$$
Then,  since $0 < a(t) < 1$ for every $t > 0$, it follows from Proposition \ref{prop:the initial limits on the interface} that
\begin{equation}
\label{properties of tild a}
0 < \widetilde{a}(\lambda) < 1\ \mbox{ for every } \lambda > 0\ \mbox{ and }\ \widetilde{a}(\lambda) \to \frac {\sqrt{\sigma_m}}{\sqrt{\sigma_s}+\sqrt{\sigma_m}}\ \mbox{ as }\ \lambda \to +\infty.
\end{equation}
Since $w= \widetilde{a}$ on $\Gamma \times (0,+\infty)$,  barriers for $w$ in the inner neighborhood $\mathcal N_0$ of $\Gamma$ given by \eqref{inner tubular neighborhood of Omega} can be constructed by modifying those in Subsections \ref{4.2} and \ref{4.3}. To be precise,  we set 
$$
w_\pm(x,\lambda) = \widetilde{a}(\lambda) f_\pm(x,\lambda) \pm \psi(x)e^{-\eta\sqrt{\lambda}}
\ \mbox{ for } \ (x,\lambda) \in \overline{\mathcal N_0} \times (0,+\infty),
$$
where $f_\pm, \psi, \eta$ are given in Subsections \ref{4.2} and \ref{4.3}. Then, in view of \eqref{elliptic pde for lambda},  \eqref{inner boundary Dirichlet}, \eqref{key differential inequalities} and \eqref{key inner boundary decay estimates}, for every  $\lambda \ge \lambda_0$ we verify that 
\begin{eqnarray*}
& \sigma_s\Delta w_+ - \lambda w_+ < 0 = \sigma_s\Delta w - \lambda w < \sigma_s\Delta w_- - \lambda w_-\ &\mbox{ in } \mathcal N_0,\nonumber
\\
& w_+ = w =w_- =  \widetilde{a}(\lambda)\ &\mbox{ on } \Gamma,\label{key equality for gradient estimates 2}
\\
& w_- < w < w_+\ &\mbox{ on } \partial \mathcal N_0 \cap\Omega.\nonumber
\end{eqnarray*}
These inequalities imply that
$$
w_- < w < w_+\ \mbox{ in } \ \mathcal N_0,
$$
by the strong comparison principle, and hence
\begin{equation}
\label{key inequalities on Gamma 2}
\pa_\nu w_+\le (\pa_\nu w)_- \le \pa_\nu w_- \ \mbox{ on } \ \Gamma,
\end{equation}
for every  $\lambda \ge \lambda_0$, where by $(\pa_\nu w)_-$ we mean the normal derivative of $w$ on $\Ga$ from inside of $\Omega$.
Thus, by recalling the definition of $w_\pm$, a routine computation with  \eqref{inner boundary Dirichlet} and \eqref{gradient of two functions}  at hand gives that 
\begin{multline}
\label{bounds-for-distance-and-curvatures 2}
\frac {\sigma_s\,\widetilde{a}(\lambda)}2\,\Delta\delta - \widetilde{a}(\lambda)\frac {\sigma_s\sqrt{\sigma_s}}{\sqrt{\lambda}}\left(\frac 12\,\Delta A_0+1\right) +\sigma_s\, (\pa_\nu \psi)\,  e^{-\eta\sqrt{\lambda}} \le   \sigma_s\,(\pa_\nu w)_- -
\widetilde{a}(\lambda) \sqrt{\sigma_s}\sqrt{\lambda} \le
\\
\frac {\sigma_s\widetilde{a}(\lambda)}2\,\Delta\delta -\widetilde{a}(\lambda) \frac {\sigma_s\sqrt{\sigma_s}}{\sqrt{\lambda}}\left(\frac 12\,\Delta A_0-1\right) -  \sigma_s\,(\pa_\nu \psi)\, e^{-\eta\sqrt{\lambda}} \ \mbox{ on } \ \Gamma,
\end{multline}
for every  $\lambda \ge \lambda_0$. Since $\Delta\delta = -\sum\limits_{j=1}^{N-1}\kappa_j$ on $\Gamma$, from   \eqref{bounds-for-distance-and-curvatures 2} and the second formula in \eqref{properties of tild a},  after some simple manipulation we obtain  that
\begin{equation}
\label{estimate of mean curvature from inside}
-\frac {\sigma_s\tilde{a}(\lambda)}2\sum\limits_{j=1}^{N-1}\kappa_j = \sigma_s\,(\pa_\nu w)_- -
\tilde{a}(\lambda) \sqrt{\sigma_s}\sqrt{\lambda} +O\bigl(1/\sqrt{\la}\bigr) \ \mbox{ as } \lambda \to +\infty.
\end{equation}
\par
Next, we consider  the positive function
$1-w$ in the outer neighborhood of $\Gamma$ defined by
$
\widetilde{\mathcal N}_0 = \{ x \in \mathbb R^N \setminus \overline{\Omega}\ :\ 0< \delta(x) < \delta_0 \}.
$
By similar arguments as above, since $1-w = 1-\widetilde{a}(\la)$ on $\Gamma \times (0,+\infty)$, we can construct barriers for $1-w$ on $\widetilde{\mathcal N}_0$, with the aid of the second formula of (2) of Lemma \ref{le:initial behavior and decay at infinity} and by replacing $\sigma_s, \widetilde{a}(\lambda)$ with $\sigma_m, 1-\widetilde{a}(\lambda)$. Thus, by proceeding similarly, we infer that
\begin{equation}
\label{estimate of mean curvature from outside}
+\frac {\sigma_m\, [1-\widetilde{a}(\lambda)]}2\sum\limits_{j=1}^{N-1}\kappa_j = \sigma_m\,(\pa_\nu w)_+ -
[1-\widetilde{a}(\lambda)]\, \sqrt{\sigma_m}\sqrt{\lambda} +O(1/\sqrt{\la}) \ \mbox{ as } \lambda \to +\infty,
\end{equation}
where $(\pa_\nu w)_+$ denotes the normal derivative from outside of $\Omega$ and we have taken into account both the sign of the mean curvature and the normal direction to $\Gamma$. 
\par
Now, with the aid of the transmission condition $\sigma_s\,(\pa_\nu w)_-=\sigma_m\,(\pa_\nu w)_+$ on $\Gamma$, by subtracting  \eqref{estimate of mean curvature from inside} from \eqref{estimate of mean curvature from outside}, we conclude from \eqref{properties of tild a} that 
$$
\sum\limits_{j=1}^{N-1}\kappa_j = 2\,
\frac{\widetilde{a}(\lambda)\, \sqrt{\sigma_s}-
[1-\widetilde{a}(\lambda)]\, \sqrt{\sigma_m}}{\sigma_m\,[1-\widetilde{a}(\lambda)] + \sigma_s\,\widetilde{a}(\lambda)}\,\sqrt{\lambda} +O(1/\sqrt{\la}) \ \mbox{ as } \lambda \to +\infty.
$$
\par
Since the first term at the right-hand side is independent of the choice of the point $x\in\Ga$, this formula implies that the first term has a finite limit as $\la\to \infty$ which is independent of $x\in\Ga$. 
Therefore, the mean curvature of $\Gamma$ must be constant, that is, $\Gamma$ must be a sphere.  
\par
Once we know that $\Gamma$ is a sphere, combining \eqref{stationary isothermic surface partially} with the initial  condition in \eqref{heat Cauchy} yields that, for every $t>0$, $u$ is radially symmetric in $x$ with respect to the center of $\Gamma$ in the connected component of $\mathbb R^N \setminus \overline{\Omega}$ with boundary $\Gamma$. Hence, by the transmission conditions on $\partial\Omega\ ( \supset \Gamma ) $, the function $w$ satisfies the overdetermined boundary conditions on $\Gamma$ for every $\lambda > 0$. Then, since $\sigma_s \Delta w - \lambda w = 0$ in $\Omega\setminus\overline{D}$ and $\Omega \setminus \overline{D}$ is connected, with the aid of the uniqueness of the solution of the Cauchy problem for elliptic equations, we see that $w$ is radially symmetric with respect to the center of $\Gamma$ in $\overline{\Omega}\setminus D$ for every $\lambda > 0$. This means that $u$ is radially symmetric in $x$ with respect to the center of $\Gamma$ in $\left(\overline{\Omega}\setminus D\right)\times (0,+\infty)$. 
 Moreover, as in the proof of Theorem \ref{th:constant flow} for problem  \eqref{heat Cauchy},  in view of the initial  condition in \eqref{heat Cauchy} and Proposition \ref{prop:the initial limits on the interface}, we can prove that $\Omega$ is radially symmetric and hence $u$ is radially symmetric in $x$  with respect to the center of $\Gamma$  on $\mathbb R^N\setminus D$ for every $t>0$.  
\par
The rest of the proof runs as that of Theorem \ref{th:constant flow}  for problem \eqref{heat Cauchy} in Subsection \ref{subsection 3.3}. \qed

\setcounter{equation}{0}

\section{The Cauchy problem when $\sigma_s=\sigma_m$}
\label{section5}

Here, we present the proof of Theorem \ref{th:stationary isothermic on the boundary for cauchy}, that is
$u$ is the solution of problem \eqref{heat Cauchy} with $\sigma_s=\sigma_m$.  
For a connected component $\Gamma$ of $\partial\Omega$, set the positive constant 
\begin{equation}
\label{distance to the inclusion D}
\rho_0 = \mbox{ dist}(\Gamma, \overline{D}).
\end{equation}

\subsection{Proof of proposition (a)}  Let $p, q \in \Gamma$ be two distinct points and introduce a function $v = v(x,t)$ by
$$
v(x,t) = u(x+p, t) - u(x + q, t)\ \mbox{ for } (x,t)\in B_{\rho_0}(0) \times (0,+\infty).
$$
Then, since $\sigma = \sigma_s$ in $\mathbb R^N\setminus\overline{D}$, we observe from \eqref{stationary isothermic surface partially} that
\begin{equation*}
v_t =\sigma_s \Delta v\ \mbox{ in }\ B_{\rho_0}(0)\times (0,+\infty)\ \mbox{ and }\ v(0,t) = 0\ \mbox{ for every } t > 0.
\end{equation*}
Therefore we can use a balance law (see \cite[Theorem 2.1, pp. 934--935]{MSannals2002} or \cite[Theorem 4, p. 704]{MSmathz1999}) to obtain that
$$
\int\limits_{B_r(0)}\!\! v(x,t)\ dx = 0\ \mbox{ for every }\  (r,t) \in (0, \rho_0)\times(0,+\infty).
$$
Thus, in view of the initial  condition of problem  \eqref{heat Cauchy}, letting $t \to +0$ yields that
\begin{equation}
\label{uniformly dense}
|\Omega^c\cap B_r(p)| = |\Omega^c\cap B_r(q)|\ \mbox{ for every } r \in (0,\rho_0),
\end{equation}
where the bars indicate the Lebesgue measure of the relevant sets. This means that ${\overline{\Omega}}^c$ is uniformly dense in $\Gamma$ in the sense of \cite[(1.4), p. 4822]{MPS2006tams}. 
\par
Therefore, \cite[Theorem 1.2, p. 4823]{MPS2006tams} applies and we see that $\Gamma$ must have constant mean curvature. Again, Aleksandrov's sphere theorem implies that
$\Gamma$ is a sphere. By combining \eqref{stationary isothermic surface partially} and the initial  condition in \eqref{heat Cauchy}  with the real analyticity in $x$ of $u$ over $\mathbb R^N \setminus\overline{D}$,  we see that $u$ is radially symmetric  in $x$ with respect to the center of $\Gamma$ on $\left(\mathbb R^N\setminus D\right) \times (0,+\infty)$. Here we used the fact that $\mathbb R^N\setminus\overline{D}$ is connected. Then, the rest of the proof runs as in the proof of Theorem \ref{th:constant flow} for problem \eqref{heat Cauchy} in Subsection \ref{subsection 3.3}. 

\subsection{Proof of proposition (b)} With the aid of a balance law (see \cite[Theorem 2.1, pp. 934--935]{MSannals2002} or \cite[Theorem 4, p. 704]{MSmathz1999}) and the assumption \eqref{constant flow surface partially}, by the same argument as in the proof of Lemma \ref{le: constant weingarten curvature}, we obtain the same equality as \eqref{balance law special}:
\begin{equation*}
\label{constant flow with balance law 2}
\nu(p)\cdot\!\!\!\!\int\limits_{B_{r}(p)}\!\!\!\!u(x,t)(x-p)\, dx= \nu(q)\cdot\!\!\!\! \int\limits_{B_{r}(q)}\!\!\!\!u(x,t)(x-q)\, dx\ \mbox{ for }\ (r,t) \in (0, \rho_0)\times(0,+\infty),
\end{equation*}
where $p, q \in \Gamma$ and $\nu$ is the outward unit normal to $\partial\Omega$. Then, in view of the initial  condition in \eqref{heat Cauchy}, letting $t \to +0$ yields that for every $p, q \in \Gamma$
\begin{equation}
\label{uniformly dense like}
\nu(p)\cdot\!\!\!\!\!\!\!\!\int\limits_{\Omega^c\cap B_{r}(p)}\!\!\!\!\!\!(x-p)\, dx= \nu(q)\cdot\!\!\!\!\!\!\!\!\int\limits_{\Omega^c\cap B_{r}(q)}\!\!\!\!\!\!(x-q)\, dx \ \mbox{ for }\ r \in (0, \rho_0).
\end{equation}
\par
The use of the techniques established in \cite{MPS2006tams} gives the asymptotic expansion 
\begin{equation}
\label{asymptotic expansion 6}
\nu(p)\cdot\!\!\!\!\!\!\!\!\int\limits_{\Omega^c\cap B_{r}(p)}\!\!\!\!\!\!(x-p)\, dx=\frac{\omega_{N-1}}{N^2-1}\,r^{N+1}
\left[ 1 - \frac{C(p)}{8(N+3)} r^2 + o(r^2)\right] \ \mbox{ as } \ r\to 0,
\end{equation}
where $\omega_{N-1}$ is the volume of the unit sphere $\SS^{N-2}\subset \RR^{N-1}$ and 
\begin{equation}
\label{new symmetric function of principal curvatures}
C(p)= \begin{cases} 3\sum_{i=1}^{N-1} \kappa_i^2(p)+ 2 \sum\limits_{i<j} \kappa_i(p)\kappa_j(p)\ &\mbox{ if }\  N \ge 3,
\\
3\kappa_1^2(p) \ &\mbox{ if }\  N = 2.
\end{cases}
\end{equation}
Indeed, by introducing the spherical coordinates  as in \cite[(5.1), p. 4835]{MPS2006tams} where we choose the origin as the point $p \in \Gamma$ and $\nu$ as the outward unit normal vector to $\partial\Omega$, \cite[(5.5), p. 4835]{MPS2006tams} is replaced with
\begin{eqnarray}
\nu(p)\cdot\!\!\!\!\!\!\!\!\int\limits_{\Omega^c\cap B_{r}(p)}\!\!\!\!\!\!(x-p)\, dx&=& \int\limits_{\mathbb S^{N-2}}\!\!\int\limits_0^r\!\rho^N\!\!\!\!\int\limits_{\theta(\rho,v)}^{\pi/2}\!\!\!\sin\phi\cos^{N-2}\!\phi\ d \phi d\rho dS_v \nonumber
\\
&=& \frac 1{N-1}\int\limits_0^r\rho^N\int\limits_{\mathbb S^{N-2}}\!\!\cos^{N-1}\!\theta(\rho,v)\  dS_v d\rho, \label{key identity}
\end{eqnarray}
where $dS_v$ denotes the surface element on $\mathbb S^{N-2}$. 
Since $\partial\Omega$ is of class $C^2$,   \cite[(5.4), p. 4835]{MPS2006tams} is replaced with
$$
 \theta(\rho, v) = \theta_1(v) \rho + o(\rho)\ \mbox{  as } \rho \to 0.
$$
Thus,  using the formula 
$$
\cos^{N-1}\!\theta = 1 -\frac {N-1}{2} \theta^2 + O(\theta^4)\ \mbox{ as } \theta \to 0,
$$
 yields that 
\begin{equation}
\label{key pre-asymptotics 1}
\cos^{N-1}\!\theta(\rho, v) = 1 -\frac {N-1}{2} \theta_1(v)^2 \rho^2  + o(\rho^2)\  \mbox{ as } \rho \to 0.
\end{equation}
In the beginning of \cite[p. 4837]{MPS2006tams} we know that
$$
\theta_1(v) = P_2(v) = -\frac 12\sum_{j=1}^{N-1}\kappa_j(p) v_j^2 \ \mbox{ for } \ v \in \mathbb S^{N-2} \left(\subset \mathbb R^{N-1}\right),
$$
since  \cite[(5.6), p. 4836]{MPS2006tams} is replaced with 
 $$
 \varphi(y) = P_2(y) +o(|y|^2)\ \mbox{ as } y \to 0 \mbox{ in } \mathbb R^{N-1}.
 $$
With \cite[Lemma 5.4, p. 4837]{MPS2006tams} in hand, we calculate that for $N \ge 3$
 \begin{eqnarray}
 \int\limits_{\mathbb S^{N-2}} \theta_1(v)^2 \ dS_v &=& \frac 14  \int\limits_{\mathbb S^{N-2}} \left(\sum_{j=1}^{N-1}\kappa_j(p)v_j^2\right)^2 dS_v\nonumber
 \\
& =& \frac 14\left\{ \sum_{j=1}^{N-1} \kappa^2_j(p)\!\!\!\int\limits_{\mathbb S^{N-2}}\!\!\!v_j^4\ dS_v+ 2 \sum_{i<j} \kappa_i(p)\kappa_j(p)\!\!\!\int\limits_{\mathbb S^{N-2}}\!\!\! v_i^2v_j^2\ dS_v\right\}\nonumber
\\
&=& \frac {\omega_{N-1}}{4(N^2-1)} \left\{ 3\sum_{j=1}^{N-1} \kappa^2_j(p) +   2\sum_{i<j} \kappa_i(p)\kappa_j(p)\right\}, \label{key pre-asymptotics 2}
 \end{eqnarray}
 and for $N=2$ 
 \begin{equation}
 \label{key pre-asymptotics 3}
  \int\limits_{\mathbb S^{N-2}} \theta_1(v)^2\ dS_v = \frac 12 \kappa^2_1(p). 
  \end{equation}
 Therefore it follows from  \eqref{key identity}, \eqref{key pre-asymptotics 1}, \eqref{key pre-asymptotics 2} and \eqref{key pre-asymptotics 3} that \eqref{asymptotic expansion 6} holds true.
Thus, by combining \eqref{asymptotic expansion 6} with \eqref{uniformly dense like}, we reach the conclusion that  $C(p)$ must be constant on $\Gamma$.
\par
If $N=2$, this directly implies that $\Gamma$ is a (closed) curve of constant curvature, hence a circle.
If $N \ge 3$, the equation that $C(p) $ is a constant on $\Gamma$  means that $\Gamma$ is an elliptic Weingarten-type surface considered by Aleksandrov \cite[p. 412]{Alek1958vestnik}, where the ellipticity follows from the strict convexity $\min\limits_{1\le j \le N-1}\kappa_j > 0$.  Thus Aleksandrov's sphere theorem implies that $\Gamma$ must be a sphere.
Then, we conclude by the same reasoning as in the proof of Theorem \ref{th:constant flow} for problem \eqref{heat Cauchy} in Subsection \ref{subsection 3.3}.

\section*{Acknowledgements}
The first and third authors were partially supported by the Grants-in-Aid
for Scientific Research (B) ($\sharp$ 26287020,  $\sharp$ 18H01126), Challenging Exploratory Research ($\sharp$ 16K13768) and JSPS Fellows ($\sharp$ 18J11430) of
Japan Society for the Promotion of Science. The second author was partially supported by an iFUND-Azione 2-2016 grant of the Universit\`a di Firenze.
The authors are grateful to the anonymous reviewers for their many valuable comments and remarks to improve clarity in many  points.


\end{document}